\numberwithin{equation}{section}
\newtheorem{Theorem}{Theorem}
\newtheorem{Remark}[Theorem]{Remark}
\newtheorem{Corollary}[Theorem]{Corollary}
\newtheorem{Assumption}[Theorem]{Assumption}
\newtheorem{Lemma}[Theorem]{Lemma}
\newtheorem{Definition}[Theorem]{Definition}
\newtheorem{Proposition}[Theorem]{Proposition}
\numberwithin{Theorem}{section}
\title[]{Hydrodynamic limit and viscosity solutions for a 2D  growth
  process in the anisotropic KPZ class} \author{Martin Legras}
\author{Fabio Lucio Toninelli} \address{CNRS and Institut Camille
  Jordan, Universit\'e Lyon 1, 43
  bd du 11 novembre 1918, 69622 Villeurbanne, France\\
  E-mail: legras@math.univ-lyon1.fr, toninelli@math.univ-lyon1.fr}
\begin{document}

\begin{abstract}
  We study a $(2+1)$-dimensional stochastic interface growth model,
  that is believed to belong to the so-called Anisotropic KPZ (AKPZ)
  universality class \cite{BF,BF2}. It can be seen either as a
  two-dimensional interacting particle process with drift, that
  generalizes the one-dimensional Hammersley process \cite{AD,SeppaH},
  or as an irreversible dynamics of lozenge tilings of the plane
  \cite{BF,Toninelli2+1}.  Our main result is a hydrodynamic limit: the interface
  height profile converges, after a hyperbolic scaling of space and
  time, to the solution of a non-linear first order PDE of
  Hamilton-Jacobi type with non-convex Hamiltonian (non-convexity of the Hamiltonian is
  a distinguishing feature of the AKPZ class). We prove the
  result in two situations: (i) for smooth initial profiles and times
  smaller than the time $T_{shock}$ when   singularities (shocks) appear
  or (ii) for all times, including $t>T_{shock}$, if the initial
  profile is convex. In the latter case, the height profile converges
  to the viscosity solution of the PDE. As an important ingredient, we
  introduce a Harris-type graphical construction for the process.
\end{abstract}

\maketitle
\section{Introduction}

The study of random interface growth models witnessed a spectacular
progress recently, especially in relation with the so-called KPZ
equation, cf.  e.g.  \cite{FSpo,Cor} for recent reviews.  The $d$-dimensional
discrete interface is modeled by the graph of a function $h$ from
$\mathbb Z^d$ (or some other $d-$dimensional lattice) to $\mathbb Z$.
The dynamics is an irreversible Markov chain and the asymmetry of the transition
rates by which the interface height locally increases or decreases produces a
non-trivial and slope-dependent average drift. Among the most interesting
and challenging questions are the problem of  obtaining hydrodynamic limits \cite{KL,Spohn} -- the
convergence of the height function, when space and time are rescaled
hyperbolically, to a first-order PDE -- and of understanding the
large-scale behavior of space-time correlations of the height
fluctuation process, be it in the stationary state or around the
typical macroscopic profile described by the hydrodynamic limit
equation.  Most of the known rigorous results, as far as both 
hydrodynamic limits and fluctuations are concerned, have been proven
for one-dimensional models, often in cases where the invariant
measures of interface gradients are of i.i.d. type.  Much less is known
for $d$-dimensional models, $d\ge2$ (see Section \ref{sec:commenti} for a brief overview of the literature).

When $d=2$, growth models are conjectured to fall into two distinct
universality classes, called the ``Isotropic'' and ``Anisotropic'' KPZ
classes \cite{Wolf}. (For simplicity we will refer to these two classes as KPZ and
AKPZ, respectively). For models in the KPZ class, height fluctuations
are believed (and numerically observed \cite{TFW}) to grow as a non-trivial power
$t^z$ of time $t$, as $t\to\infty$. On the other hand, for models in
the AKPZ class, height fluctuations are believed to converge asymptotically in the
scaling limit to the solution of  a stochastic heat equation with additive noise; in particular,
the variance of height fluctuations should grow like $\log t$.
Conjecturally \cite{Wolf}, the universality class a particular growth model falls
into is determined by the convexity properties of the function
$v(\rho)$ that gives the average interface drift in the
translation-invariant stationary state with slope $\rho$. Namely, it
is predicted that if the signature of the Hessian of $v(\cdot)$ is
either $(+,+)$ or $(-,-)$, then the model falls into the isotropic KPZ
class. The AKPZ class is instead relevant when the signature is
$(+,-)$ (the borderline cases, where one eigenvalue is zero and the
second is not, is also conjectured to belong to the AKPZ class).

In this work, we analyze a two-dimensional growth model in the AKPZ
class, that was originally introduced in \cite{BF,BF2} and later
studied in \cite{Toninelli2+1}.  There are various equivalent ways to
view this process. One of them is to see it as a dynamics on lozenge
tilings of the plane: the interface is then the graph of the height
function canonically associated to the tiling (see
Fig. \ref{fig:4}). Another useful viewpoint is to interpret it as a
driven, interacting system of interlaced particles that evolve on a
two-dimensional lattice via long-range jumps. Suitable sections of the
particle system behave like mutually interacting one-dimensional,
discrete, Hammersley-Aldous-Diaconis processes. In this sense, our
growth process is a two-dimensional generalization of the
Hammersley-Aldous-Diaconis process \cite{AD,SeppaH,FerrariMartin}. Yet
another viewpoint is taken in \cite{BF,BF2}, where the process is seen
as a two-dimensional totally asymmetric driven system of interacting
particles that perform only jumps of size $1$ but can ``push'' other
particles arbitrarily far away.

The main result of the present work is a hydrodynamic limit: when
space and time are rescaled hyperbolically ($x=\xi L,t=\tau L$ and
$L\to\infty$), the height function  rescaled by $1/L$
converges in probability to the solution of a first-order non-linear
PDE of Hamilton-Jacobi type. Such equations are known to have
singularities at a finite time $T_{shock}$: their gradient develops
discontinuities or shocks. Our hydrodynamic limit result is proven in
two situations: either (i) for smooth initial profile and for times up
to $T_{shock}$ (Theorem \ref{th:hl}) or (ii) for \emph{all times},
provided the initial profile is convex (Theorem \ref{th:shock}). In
the latter case, the relevant weak solution of the PDE to which the
height profile converges is the so-called viscosity solution \cite{user}.  It is
tempting to conjecture that convergence to the viscosity solution of
the PDE holds for all times also  for non-convex initial data.

The PDE describing the hydrodynamic limit is of the Hamilton-Jacobi form
\begin{eqnarray}
  \label{PDEintro}
  \partial_\tau\phi(\xi,\tau)+v(\nabla \phi(\xi,\tau))=0,
\end{eqnarray}
where the ``Hamiltonian'' (or rather ``drift'') function $v(\cdot)$ is
an explicit function (cf. \eqref{eq:v}) whose Hessian has a $(+,-)$ signature
for every slope. As we mentioned above, this is a distinguishing
feature of the AKPZ universality class but it is also a remarkable source
of difficulties, since the solution of \eqref{PDEintro} cannot be
expressed by a Hopf-Lax type formula. At the probabilistic level, lack of convexity
 prevents to prove  the hydrodynamic limit  via simple
super-additivity arguments.  More comments on these points can be
found in Section \ref{sec:commenti}.

Let us mention two previous results on our growth model, that are
relevant for the discussion here.  First, in \cite{BF} the
hydrodynamic limit was proven for a special initial condition, where
particles are perfectly packed in a certain wedge-shaped region of the
lattice. The special features of such initial conditions allow for the
application of ``integrable probability methods'' and in particular
for the exact computation, in terms of sums of determinants, of the
average particle currents integrated in time. While the same methods
may allow to analyze other ``integrable'' initial conditions (and also
more general, but still integrable, two-dimensional random growth
models, see e.g. \cite[Sec. 3.3]{BBO}), they are not robust enough to
yield ``generic'' results, where the initial microscopic configuration
is just assumed to approximate a macroscopic profile satisfying some
smoothness assumptions.  The methods we use here are of entirely
different nature with respect to those of \cite{BF} and we do not rely
on the integrable structure of the dynamics.  Let us also observe that
the initial condition chosen in \cite{BF} is such that shocks do not
appear: characteristic lines of the PDE never cross.

Secondly, a special feature of this growth process is that its stationary measures, labelled by the average slope, are known \cite{Toninelli2+1}: they coincide
with the translation invariant, ergodic Gibbs measures on lozenge tilings,
obtained as limits of uniform distributions on the torus with fixed proportions of lozenges \cite{KOS}. It is the knowledge of the stationary states that allows to obtain an explicit expression for the drift function $v(\cdot)$.

\subsection{Comments on the result and on related literature}
\label{sec:commenti}

Hydrodynamic limit results for interface growth models in dimension
$d\ge2$ are rare; here we briefly review the more relevant for us,
trying to emphasize the differences with our case. (For $d=1$ the
literature is much more vast and we refer for instance to the
introduction of \cite{Reza1} for a discussion of known results and a
list of references). In \cite{Reza1}, a class of $d$-dimensional
growth models, called v-exclusion processes there, was studied: they
enjoy a property called ``strong monotonicity'', that is stronger than
the stochastic domination or ``attractity'' property discussed e.g. in
Section \ref{sec:stochdom} of the present work. The proof of the
hydrodynamic limit in \cite{Reza1} heavily relies on super-additivity
arguments and the drift function $v(\cdot)$ in the limit PDE is
automatically convex, so that in particular these models necessarily
fall into the isotropic KPZ class. Similar ideas were used earlier
\cite{Seppadeposition} to obtain the hydrodynamic limit for a
$d$-dimensional ballistic deposition model: again, the height function
converges to the Hopf-Lax solution of a Hamilton-Jacobi equation with
convex drift function. In our context, it is relevant to mention also
the work \cite{Seppa2}, that obtains the hydrodynamic limit for a
$d$-dimensional generalization of the Hammersley-Aldous-Diaconis
process, though such generalization is quite different, even for
$d=2$, from the $2$-dimensional one we study here. Notably, once again
the drift function in \cite{Seppa2} turns out to be (strictly) convex
and, in contrast with \cite{Reza1,Seppadeposition}, it is fully
explicit. Also in \cite{Seppa2}, the proof of convergence to the Hopf-Lax
solution of the PDE uses super-additivity.

Another very interesting work is \cite{Reza2}: the growth models
studied there do not satisfy strongly monotonicity. For dimension
$d\ge2$, however, the hydrodynamic limit obtained there is weak in the
sense that it is not known that the drift function $v(\cdot)$ is
non-random (i.e., the height function could converge in the scaling
limit to the solution of a \emph{random} first order Hamilton-Jacobi
equation).

The Markov chain we study in the present work is rather different from
those just mentioned. For one thing, it does not satisfy strong
monotonicity, super-additive arguments do not work and, as we already
noted, the drift function $v(\cdot)$ in \eqref{PDEintro} turns out to
have no definite convexity or concavity. What comes to rescue is the
knowledge of the stationary measures, that allows to compute
$v(\cdot)$. Lack of convexity of $v(\cdot)$ induces serious analytic
problems in the analysis of the PDE. Notably, we are not aware of any
variational formula of the Hopf-Lax type expressing its solution, for
general initial condition. This is one of the main reasons why we
cannot prove convergence to the hydrodynamic limit for general initial
profiles and for times $t>T_{shock}$.  A Hopf-type variational formula
\emph{is} however available when the initial datum is convex or
concave \cite{BE}, and this is strongly used in the proof of Theorem
\ref{th:shock} below.

Let us conclude this introduction with a few more comments on the
peculiar technical difficulties one encounters in the proof of the
hydrodynamic limit for our growth process. An important ingredient in
the methods of \cite{Reza2}, that is a key step to obtain a tightness
property in Skorohod space for the height function process, is that,
uniformly with respect to the initial condition, the
height at a given point can grow at most linearly in time. This is
definitely false for our model. In fact, the average speed of growth
is larger and larger as the particles are more and more mutually
spaced.  This can be seen, at the macroscopic level, by the fact that
the function $v(\cdot)$ in \eqref{eq:v} diverges as $\rho_1+\rho_2$
tends to $1$ (as we will see, this corresponds exactly to the
situation where particle spacings diverge).  One might hope that, if
particle spacings are tight in the initial condition, then the same
holds at all later times.  For instance, a stochastic domination
property of the following type would be very helpful: given two
configurations such that the former has larger particle spacings than
the latter, the evolutions with the two initial conditions can be
coupled in a way that the same property holds at all later times.  In
fact, a property of this type is valid for the \emph{one-dimensional}
Hammersley-Aldous-Diaconis process \cite[Sec. 6]{SeppaH}, but
unfortunately it seems to fail for our two-dimensional model. The estimates on
``speed of propagation of information'' and the ``localization
procedure'' introduced in Section \ref{sec:importante}, as well as the
iterative procedure of Sections \ref{sec:hl} and \ref{sec:shocks}, are
devised precisely to overcome these difficulties.  Finally, let us
emphasize that an important tool for our proof, as was the case for
the Hammersley-Aldous-Diaconis processes in \cite{SeppaH}, is a
formulation of the growth process in terms of a suitable graphical
construction.

\subsection*{Organization of the article}
In Section \ref{sec:model} we define the state space of the
interacting particle model and the associated height function;
moreover, we introduce the growth process via a graphical construction
and we state some of its basic properties (Proposition
\ref{prop:bendefinito}, that is proven in Section
\ref{sec:bendefinito}). In Section \ref{sec:mainresult} we state our
main result: the hydrodynamic limit for the height function (Theorems
\ref{th:hl} and \ref{th:shock}).  In Section \ref{sec:importante} we
prove two crucial properties of the dynamics: monotonicity and a bound
on the speed of propagation of information. Finally, Theorems
\ref{th:hl} and \ref{th:shock} are proved in Sections \ref{sec:hydro1}
and \ref{sec:shocks} respectively.

\section{Model}
\label{sec:model}
\subsection{Configuration space and informal description of the dynamics}
\label{sec:confspace}
 The lattice where particles live consists of an infinite
collection of horizontal lines, labeled by an index $\ell\in \mathbb
Z$. Each line contains an infinite collection of particles, each
having a label $(p,\ell)$, $p\in \mathbb Z$. See Figure \ref{fig:1}. Horizontal particle
positions $z_{(p,\ell)}$ are discrete and
\[
z_{(p,\ell)}\in\mathbb Z+(\ell\!\!\!\!\mod 2)/2:
\] on lines with index
$\ell\in 2\mathbb Z$ one has $z_{(p,\ell)}\in \mathbb Z$, while on
lines with index $\ell\in 2\mathbb Z+1$ one has $z_{(p,\ell)}\in
\mathbb Z+1/2$ (the reason for this choice is that it will be
convenient that no two particles in neighboring lines have the
same horizontal position).  

Moreover, particle positions satisfy a number of constraints:
\begin{Definition}
  \label{def:omega}We let $\Omega$ be the set of  particle configurations $\eta$ satisfying the following properties:
\begin{enumerate}
\item no two particles in the same line $\ell$ have the same position
  $z_{(p,\ell)}$. We can then label particles in each line in such a
  way that $z_{(p,\ell)}<z_{(p+1,\ell)}$. Labels should be seen as
  attached to particles, and they will not change along the dynamics.
\item particles are interlaced in the following sense: for every
  $\ell$ and $p$, there exists a unique $p'\in \mathbb Z$ such that
  $z_{(p,\ell)}<z_{(p',\ell+1)}<z_{(p+1,\ell)}$ (and, as a
  consequence, also a unique $p''\in \mathbb Z$ such that
  $z_{(p,\ell)}<z_{(p'',\ell-1)}<z_{(p+1,\ell)}$). Without loss of
  generality, we will assume that $p'=p$ (and therefore
  $p''=p+1$). This can always be achieved by deciding which particle
  is labeled $0$ on each line.  Also, by convention, we establish that the particle labeled $(0,0)$ is the left-most one on line $\ell=0$, with non-negative  horizontal coordinate.
\item for every $\ell$ one has
\begin{eqnarray}
  \label{eq:1}
 \lim_{p\to -\infty} \frac{z_{(p,\ell)}}{p^2}=0.
\end{eqnarray}
\end{enumerate}
\end{Definition}
Note that if condition \eqref{eq:1} holds for $\ell=0$ it holds for every $\ell$.



Here we give an informal description of the dynamics. A more rigorous version is given in Section \ref{sec:graph}. The property (3) in Definition \ref{def:omega} will ensure that the dynamics is well-defined.

Given the particle labelled $(p,\ell)$, we denote $I_{(p,\ell)}=\{(p-1,\ell+1),(p,\ell-1)\}$, see Fig. \ref{fig:1}.  Note that
these are simply the labels of the two particles directly to the left of $(p,\ell)$
on lines $\ell+1$ and $\ell-1$.
\begin{figure}
\includegraphics[width=9cm]{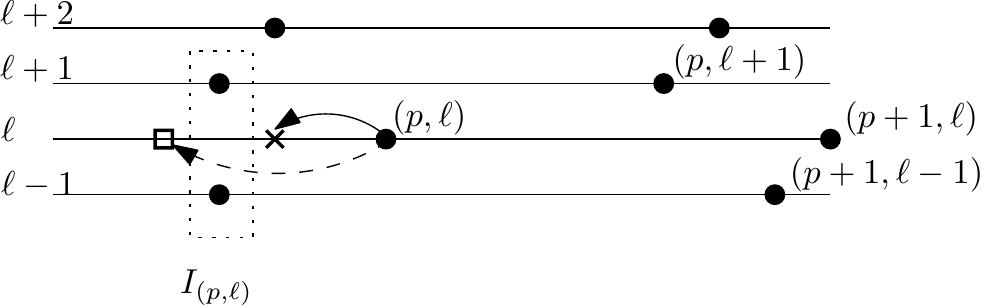}
  \caption{The set $I_{(p,\ell)}$ comprises the particle labels of the two particles in the dotted region. The particle $(p,\ell)$ can jump  to the position marked by the cross (and does so with rate $1$) but cannot jump to the position marked by a square, because
it would have to overcome the particles in $I_{(p,\ell)}$.  }
\label{fig:1}
\end{figure}
  To every pair $(\ell,z)$ with
$\ell\in\mathbb Z$ and $z\in \mathbb Z+(\ell\!\!\mod 2)/2$ we
associate an i.i.d. Poisson clock of rate $1$.   When the clock labeled
$(\ell,z)$ rings, then:
\begin{itemize}
\item if position $(\ell,z)$ is occupied, i.e. if there is a particle
  on line $\ell$ with horizontal position $z$, then nothing happens;
\item if position $(\ell,z)$ is free, let $(p,\ell)$ denote the label
  of the left-most particle on line $\ell$, with $z_{(p,\ell)}>z$. If
  both particles in $I_{(p,\ell)}$ have horizontal position smaller
  than $z$, then particle $(p,\ell)$ is moved to position $(\ell,z)$;
  otherwise, nothing happens.
\end{itemize}
The second point can be described more compactly as follows: is position $(\ell,z)$ is free, then particle $(p,\ell)$ is moved to position  $(\ell,z)$ if and only if the new configuration is still in $\Omega$, i.e. if the interlacement constraints are still satisfied.

\begin{Remark}
  Recall the definition \cite{FerrariMartin} of the one-dimensional (discrete)
  Hammersley-Aldous-Diaconis (HAD) process: on each site $x\in\mathbb Z$ there is at most
  one particle; each particle jumps with rate $1$ to any position that
  is at the same time to its left and to the right the next particle.
  Going back to our two-dimensional interacting particle system, we
  see that particles on each line $\ell$ follow a HAD process,
  except that particle jumps can be prevented by the interlacing
  constraints with particles in lines $\ell\pm1$. This induces
  non-trivial correlations between the processes on different lines.
  As discussed in Section \ref{sec:stationary}, the invariant measures
  of the two-dimensional dynamics restricted to any line $\ell$ are
  very different from the (i.i.d. Bernoulli) invariant measures of the
  HAD process.
\end{Remark}

\subsection{Height function}
\label{sec:height}
To each configuration $\eta\in\Omega$ we associate an integer-valued
height function $h_\eta$. We first give the definition, then we
motivate it via the bijection between particle configurations
and lozenge tilings of the plane.

First of all let us remark that  the graph $G$ whose vertices are  all the possible particle positions $(\ell,z),\ell\in\mathbb Z, z\in \mathbb Z+(\ell\!\!\mod 2)/2$ and where the neighbors of $(\ell,z)$ are 
the four vertices $(\ell\pm1,z\pm1/2)$ can be identified with $\mathbb Z^2$, rotated by $\pi/4$ and suitably rescaled, see Figure \ref{fig:2}.
\begin{figure}
\includegraphics[width=11cm]{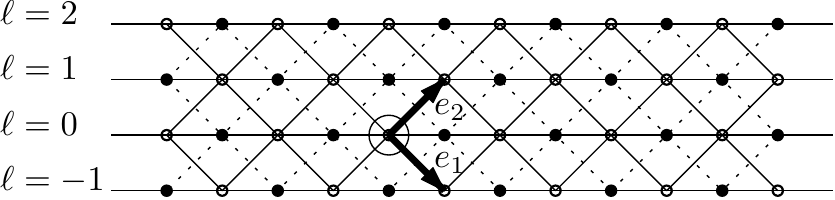}
\caption{The lattice $G$ (dotted, with black vertices that are
  possible particle positions $(\ell,z)$, with
  $z\in\mathbb Z+(\ell\!\!\mod 2)/2$), the lattice $G^*$ (bold, with white
  vertices) and the two coordinate vectors $e_1,e_2$. The origin of
  $G^*$ (encircled) is the point of horizontal coordinate $z=-1/2$ on
  line $\ell=0$.}
\label{fig:2}
\end{figure}The height function $h_\eta$ is defined on the dual graph
$G^*$, that is just obtained by shifting $G$ horizontally by $1/2$,
see  Figure \ref{fig:2}. In other words, on each line $\ell$, the
  height function is defined at horizontal coordinates
  $n\in \mathbb Z+((\ell+1)\!\!\mod 2)/2$.

There is a natural and convenient choice of coordinates on $G^*$:
\begin{Definition}[Coordinates on $G^*$]
\label{def:coord}
 The point of $G^*$ of horizontal coordinate $-1/2$ of the line
labeled $\ell=0$ is assigned the coordinates $(x_1,x_2)=(0,0)$. The unit vector
$e_1$ (resp. $e_2$) is the vector from $(0,0)$ to the point of horizontal coordinate $0$ on the line labeled $\ell=-1$
(resp. $\ell=+1$), see Figure \ref{fig:2}. With this convention, the vertex of $G^*$ labeled $(x_1,x_2)$ is on line 
\begin{equation}
  \label{eq:elle}
\bar\ell(x)=x_2-x_1  
\end{equation}
 and has
horizontal coordinate 
\begin{equation}
  \label{eq:zeta}
\bar z(x)=(x_1+x_2-1)/2.   
\end{equation}
\end{Definition}
We can now define the height function $h_\eta$:
\begin{Definition}[Height function]
\label{def:height}
Given a configuration $\eta\in\Omega$, its height function $h_\eta$ is
an integer-valued function defined on $G^*$.  We fix
$h_\eta(0,0)$ to some constant (e.g. to zero) and then it is enough to define the gradients
$h_\eta(x_1+1,x_2)-h_\eta(x_1,x_2)$ and
$h_\eta(x_2,x_2+1)-h_\eta(x_1,x_2)$ to fix $h_\eta$ unambiguously.

Given $(x_1,x_2)\in G^*$, let $p$ (resp. $p+1$) be the index of the rightmost
(resp. leftmost) particle on line $\ell=x_2-x_1$ that is to the left\footnote{Recall that particles are on $G$ and not on $G^*$: therefore, a particle to the left (resp. right) of $(x_1,x_2)\in G^*$ is strictly to the left (resp. right) of it.  }
(resp. to the right) of $(x_1,x_2)$. Recall that particle $p$ of line
$\ell+1$ satisfies $z_{p,\ell}<z_{p,\ell+1}<z_{p+1,\ell}$. We
establish that
\begin{multline}
  \label{eq:height}
\Delta_2 h_\eta(x_1,x_2):= h_\eta(x_1,x_2+1)-h_\eta(x_1,x_2)\\=\left\{
    \begin{array}{ccc}
      0 & \text{if} & (x_1,x_2+1) \text{ is to the right of particle } (p,\ell+1)\\
      1 & \text{if} & (x_1,x_2+1) \text{ is to the left of particle } (p,\ell+1)
    \end{array}
\right.
\end{multline}
and similarly 
\begin{multline}
  \label{eq:height2}
\Delta_1 h_\eta(x_1,x_2):=  h_\eta(x_1+1,x_2)-h_\eta(x_1,x_2)\\=\left\{
    \begin{array}{ccc}
      0 & \text{if} & (x_1+1,x_2) \text{ is to the right of particle } (p+1,\ell-1)\\
      1 & \text{if} & (x_1+1,x_2) \text{ is to the left of particle } (p+1,\ell-1).
    \end{array}
\right.
\end{multline}

\end{Definition}
See Figure \ref{fig:3}. 
\begin{figure}
\includegraphics[width=11cm]{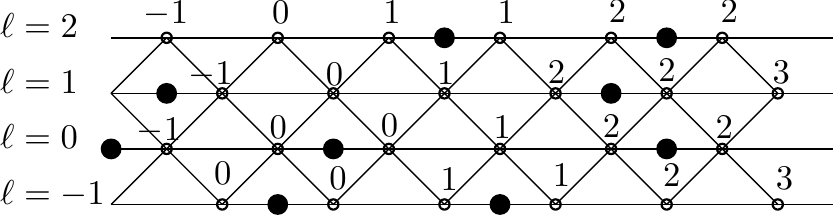}
\caption{A portion of particle configuration and the corresponding height function. Particles (black dots) are on vertices of $G$ and the height is defined on vertices of $G^*$.}
\label{fig:3}
\end{figure}
We leave it to the reader to check that
\begin{multline}
  \label{eq:grad2}
\Delta_1 h_\eta(x_1,x_2)+\Delta_2 h_\eta(x_1+1,x_2)=\Delta_2 h_\eta(x_1,x_2)+\Delta_1 h_\eta(x_1,x_2+1)\\
=h_\eta(x_1+1,x_2+1)-h_\eta(x_1,x_2)=\left\{
    \begin{array}{ll}
      0 & \text{if $\exists$    particle between } (x_1,x_2) \text{ and } (x_1+1,x_2+1)\\
      1 & \text{if $\nexists$    particle between } (x_1,x_2) \text{ and } (x_1+1,x_2+1).
    \end{array}
\right.
\end{multline}
The first equality in \eqref{eq:grad2} implies that the sum of gradients of $h_\eta$ along any closed circuit is zero, so that the definition of $h_\eta$ is well-posed.
 
Let $\mathbb T\subset \mathbb R^2$ denote the closed triangle with
vertices $(0,0),(1,0),(0,1)$ and $\stackrel\circ{\mathbb T}$ be its
interior.  The following holds:
\begin{Lemma}
\label{lemma:approssimazione}
 Let $A\subset \stackrel\circ{\mathbb T}$.  Let $\phi:\mathbb R^2\mapsto \mathbb R$ be a Lipschitz function satisfying  $\nabla \phi(x)\in A$ for almost every $x\in \mathbb R^2$. There exists a sequence $\{\eta^{(L)}\}_{L\in\mathbb N}$ in $\Omega$ such that  
 \begin{eqnarray}
   \label{eq:approssimazione}
|   h_{\eta}(x)-L \phi(x/L)|\le 1 \quad \text{ for every } \quad  x\in \mathbb Z^2.
 \end{eqnarray}
\end{Lemma}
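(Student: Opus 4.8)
The plan is to reduce the statement to the construction, for each $L$, of a single integer-valued ``admissible'' height function on $G^*$, and then to let $\eta^{(L)}$ be the particle configuration that this function encodes. Recall from Definition \ref{def:height} and from \eqref{eq:height}--\eqref{eq:grad2} that $\eta\mapsto h_\eta$ identifies $\Omega$ -- modulo the free additive constant $h_\eta(0,0)$ and modulo the growth requirement \eqref{eq:1} -- with the set of integer-valued functions $h$ on $G^*$ whose discrete gradients satisfy $\Delta_1 h,\Delta_2 h\in\{0,1\}$ together with the compatibility relation $\Delta_1 h(x_1,x_2)+\Delta_2 h(x_1+1,x_2)\in\{0,1\}$: the gradients of $h$ and the particle positions determine each other, and conditions (1)--(2) of Definition \ref{def:omega} (distinct positions on each line, interlacing between neighbouring lines) hold precisely when these local constraints do. So it suffices to build, for each $L$, an integer-valued $h^{(L)}$ on $G^*\cong\mathbb Z^2$ obeying those constraints and $|h^{(L)}(x)-L\phi(x/L)|\le 1$; the associated configuration $\eta^{(L)}$ then satisfies \eqref{eq:approssimazione} (choose the additive constant so that $h_{\eta^{(L)}}=h^{(L)}$), provided we also check \eqref{eq:1} for it.

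I would simply take $h^{(L)}(x):=\lfloor L\phi(x/L)\rfloor$, so that $|h^{(L)}(x)-L\phi(x/L)|<1$ holds trivially, and the real work is to verify admissibility. Since $\phi$ is Lipschitz and $\nabla\phi(x)\in A\subset\mathbb T$ for almost every $x$, and $\mathbb T$ is closed and convex, $\phi$ obeys the bound $\phi(x+v)-\phi(x)\le\sigma_{\mathbb T}(v)$ for all $x,v$, where $\sigma_{\mathbb T}(v)=\max_{\rho\in\mathbb T}\langle\rho,v\rangle$ is the support function of $\mathbb T$; this is the standard consequence of an almost-everywhere gradient constraint (seen e.g. by mollifying $\phi$, using that mollification averages $\nabla\phi$ and $\mathbb T$ is convex). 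Using $\sigma_{\mathbb T}(e_1)=\sigma_{\mathbb T}(e_2)=\sigma_{\mathbb T}(e_1+e_2)=1$ and $\sigma_{\mathbb T}(-e_1)=\sigma_{\mathbb T}(-e_2)=\sigma_{\mathbb T}(-(e_1+e_2))=0$, and writing $g_L(x):=L\phi(x/L)$, one gets
\[
g_L(x+e_1)-g_L(x)\in[0,1],\qquad g_L(x+e_2)-g_L(x)\in[0,1],\qquad g_L(x+e_1+e_2)-g_L(x)\in[0,1].
\]
Since $\lfloor b\rfloor-\lfloor a\rfloor\in\{0,1\}$ whenever $a\le b\le a+1$, and since $\Delta_1 h^{(L)}(x)+\Delta_2 h^{(L)}(x+e_1)=h^{(L)}(x+e_1+e_2)-h^{(L)}(x)$ by telescoping, this yields $\Delta_1 h^{(L)},\Delta_2 h^{(L)}\in\{0,1\}$ and $\Delta_1 h^{(L)}(x)+\Delta_2 h^{(L)}(x+e_1)\in\{0,1\}$. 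Hence $h^{(L)}$ is admissible and is the height function of a configuration $\eta^{(L)}$ satisfying (1)--(2) of Definition \ref{def:omega}.

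It then remains to check \eqref{eq:1} for $\eta^{(L)}$, which by the remark after Definition \ref{def:omega} need only be checked on line $\ell=0$. By \eqref{eq:grad2} a particle of $\eta^{(L)}$ lies ``between $(x_1,x_2)$ and $(x_1+1,x_2+1)$'' exactly when $h^{(L)}(x_1+1,x_2+1)-h^{(L)}(x_1,x_2)=0$; since consecutive vertices of $G^*$ on line $\ell=0$ differ by $e_1+e_2$ and have horizontal coordinates differing by $1$, the number of particles of $\eta^{(L)}$ with horizontal coordinate in an interval of length $D$ to the left of the origin equals $D$ minus the increment of $h^{(L)}$ over that interval, up to an $O(1)$ boundary error. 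As $\nabla\phi$ takes values in $A$, whose closure we may take to be a compact subset of $\stackrel\circ{\mathbb T}$ (this is the role of the hypothesis $A\subset\stackrel\circ{\mathbb T}$; in particular $\rho_1+\rho_2\le 1-\delta$ on $A$ for some $\delta>0$), a mollification argument as above gives that this increment is at most $(1-\delta)D+O(1)$, so the interval contains at least $\delta D-O(1)$ particles. Hence the particle spacings of $\eta^{(L)}$ on line $0$ are uniformly $O(1)$, which forces $|z_{(p,0)}|=O(|p|)=o(p^2)$ as $p\to-\infty$, i.e. \eqref{eq:1}.

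The construction is elementary; I expect the only two points requiring genuine care to be (a) the admissibility of $h^{(L)}=\lfloor L\phi(\cdot/L)\rfloor$, which rests on the support-function characterisation of Lipschitz functions with almost-everywhere prescribed gradient, and (b) the verification of the growth condition \eqref{eq:1}, where one really uses that the macroscopic slopes stay away from the degenerate boundary $\partial\mathbb T$ of the admissible slope set -- the regime in which particle spacings would diverge.
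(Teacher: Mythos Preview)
Your proposal is correct and follows essentially the same route as the paper: define $h^{(L)}(x)$ as an integer rounding of $L\phi(x/L)$, check that the three discrete increments along $e_1$, $e_2$, $e_1+e_2$ lie in $\{0,1\}$ (the paper phrases this via $n_1,n_2,n_3$ and the associated lozenge tiling, which is equivalent), and then use $A\subset\stackrel\circ{\mathbb T}$ to get uniformly bounded inter-particle spacings, hence \eqref{eq:1}. Your treatment is slightly more careful than the paper's in making explicit, via the support function and mollification, why the a.e.\ gradient constraint on a merely Lipschitz $\phi$ yields the pointwise increment bounds; the paper simply asserts this step.
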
 
The proof of Lemma
  \ref{lemma:approssimazione} is given at the end of next section
  since it is more immediate after discussing the mapping between
  particle configurations and lozenge tilings.

  The restriction $\nabla \phi(x)\in \mathbb T$ can be easily
  understood as follows. At the microscopic level, the interface
  gradients
  \begin{eqnarray}
    \label{eq:gradmicro}
    h_\eta(x+(1,0))-h_\eta(x), \;h_\eta(x+(0,1))-h_\eta(x),\; h_\eta(x+(1,1))-h_\eta(x)
  \end{eqnarray}
  all belong to $\{0,1\}$. Therefore, any Lipschitz interface $\phi$ that
  can be approximated by a discrete one must verify
  $\partial_{x_i}\phi\in [0,1]$, as well as
  $\partial_{x_1}\phi+\partial_{x_2}\phi \in [0,1]$. This is exactly the
  condition $\nabla \phi\in \mathbb T$. 

\subsubsection{Height function and mapping to lozenge tilings}

In order to understand the definition of height function given above,
let us first of all recall that there is a bijection between
interlaced particle configurations satisfying properties (1)-(2)
of Definition \ref{def:omega} and lozenge tilings of the plane, as in
Figure \ref{fig:4}. 
\begin{figure}
\includegraphics[width=13cm]{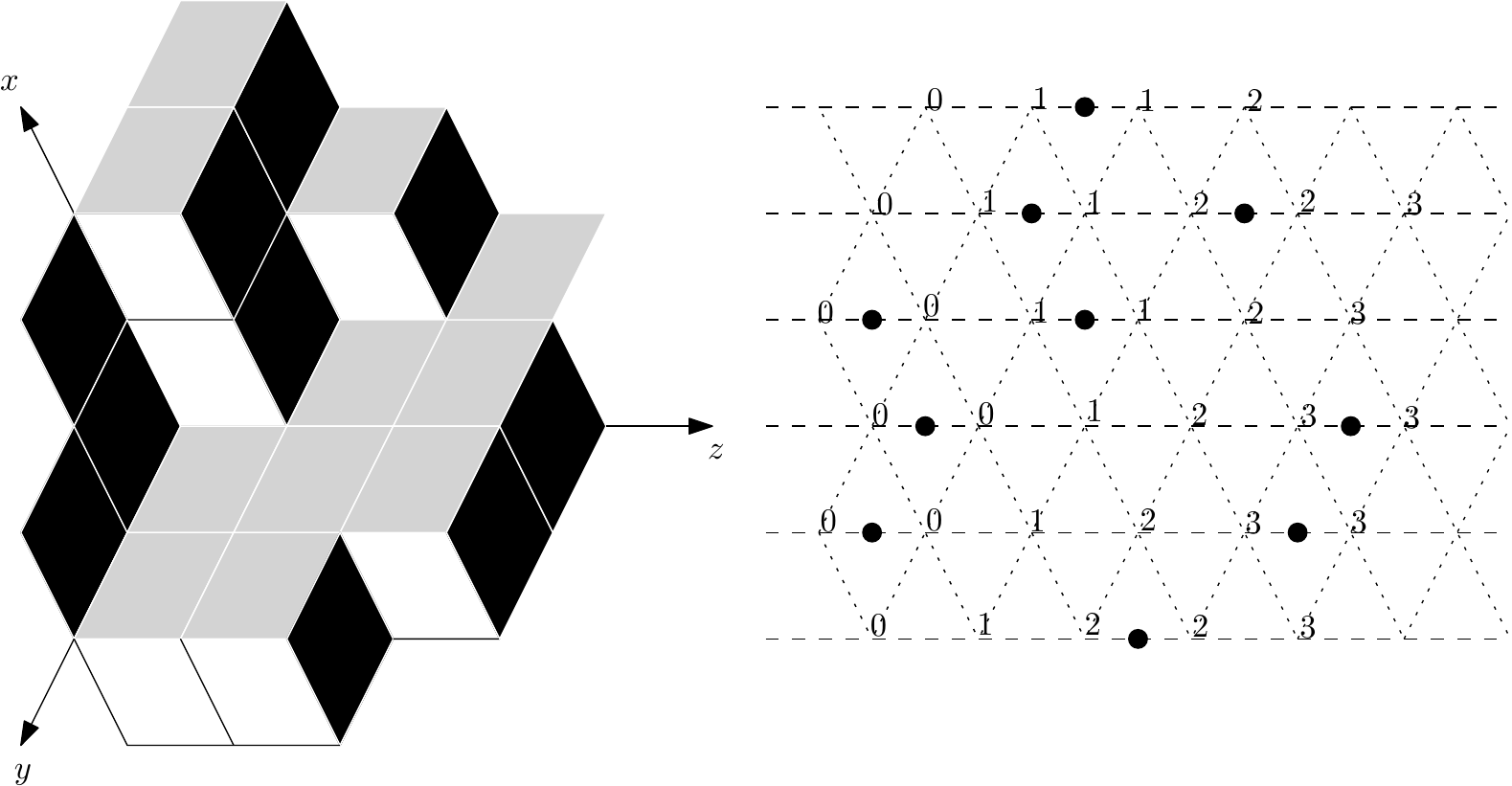}
\caption{The bijection between lozenge tiling (or stepped interface)
  and particle configuration: vertical (black) lozenges correspond to
  particles (dots). The reader may check that the height function in
  the right drawing corresponds to the height in the $z$ direction, that is  w.r.t. the $(x,y)$ plane,
  of the pile of cubes in the left drawing. }
\label{fig:4}
\end{figure}
 Particles correspond to vertical lozenges: the vertical
coordinate of the central point of a vertical lozenge defines the line the particle is on,
and its horizontal coordinate corresponds to the $z_{(p,\ell)}$
coordinate of the particle. Note that, if lengths are rescaled in such
a way that lozenge sides are $1$, then horizontal positions of
vertical lozenges are shifted by half-integers between neighboring
lines (as is the case for particles). That horizontal positions of
lozenges in neighboring lines satisfy the same interlacings as
particle positions $z_{(p,\ell)}$, as well as the fact that the
tiling-to-particle configuration mapping is a bijection, is well known
and easy to understand from the picture. 

 Given a lozenge tiling as in Figure \ref{fig:4} and
viewing it as the boundary of a stacking of unit cubes in
$\mathbb R^3$, a natural definition of height function is to assign to
each vertex of a lozenge the height (i.e. the $z$ coordinate) w.r.t. the $(x,y)$ plane of the
point (in $\mathbb R^3$) in the corresponding unit cube. As a consequence, height is
integer-valued and defined on points that are horizontally shifted
$1/2$ w.r.t. centers of lozenges, i.e. on points of $G^*$.  We leave
to the reader to check that the height function as defined in the
previous section precisely corresponds to the height of the stack of
cubes w.r.t. the $(x,y)$ plane.

\begin{proof}
  [Proof of Lemma \ref{lemma:approssimazione}]
Define 
\begin{eqnarray}
  \label{eq:happr}
  h(x):=\lfloor L \phi(x/L)\rceil, \,x\in\mathbb Z.
\end{eqnarray}
We want first to prove that such function $h(\cdot)$ is the height function of some lozenge tiling.
Given $x$, let 
\begin{multline}
n_1(x)=h(x+(1,0))-h(x),\\ n_2(x)=h(x+(1,1))-h(x+(1,0), \\n_3(x)=1-[h(x+(1,1))-h(x)].  
\end{multline}
By the assumption $\nabla \phi\in \mathbb T$, we see that
$n_i(x)\in\{0,1\}$. Also, since $n_1(x)+n_2(x)+n_3(x)=1$, we have that
necessarily two of them are $0$ and the third is $1$.  Draw an edge
between $x$ and $x+(1,0)$ (resp. between $x+(1,0)$ and $x+(1,1)$,
resp. between $x$ and $x+(1,1)$) if $n_1(x)=0$ (resp. if $n_2(x)=0$,
resp. if $n_3(x)=0$) and no edge otherwise.  Therefore, we see that
the set of edges defines a lozenge tiling of the plane (every
elementary triangle $x,x+(1,0),x+(1,1)$ has two edges on the boundary
of a lozenge and the third crosses a lozenge) and it is clear that $h$
is just the height function of the corresponding cube stacking.

To prove that $h(\cdot)$ in \eqref{eq:happr} is the height function of a particle configuration in $\Omega$ it remains to show that \eqref{eq:1} is satisfied.
Since $\nabla \phi\in A\subset \stackrel\circ{\mathbb T}$, we have that $\partial_{x_1}\phi+\partial_{x_2}\phi$ is bounded away from $1$. From \eqref{eq:grad2} we deduce that 
inter-particle distances $z_{(p,\ell)}-z_{(p-1,\ell)}$ are uniformly bounded, so that $z_{(p,\ell)}$ grows linearly for $p\to-\infty$.
\end{proof}

\subsection{Graphical definition of the dynamics}
\label{sec:graph}
Here we give a precise definition of the dynamics informally
introduced in Section \ref{sec:confspace}.  Given an initial condition
$\eta\in \Omega$ and $t>0$, we wish to define particle positions
$z_{(p,\ell)}(t)$ at time $t$. In Proposition \ref{prop:bendefinito}
we will see that the dynamics satisfies a Markov-type semi-group
property.

We need a few notations.  For every pair $(\ell,z)$ with
$z\in \mathbb Z+(\ell\!\!\mod 2)/2$, let $\mathcal W_{(\ell,z)}$ be a
Poisson point process on $\mathbb R^+$, of intensity $1$. Poisson
processes with different $\ell$ or $z$ are independent. One should
view $\mathcal W_{(\ell,z)}$ as the times the exponential clock
located on line $\ell$ at horizontal position $z$ rings. We denote
also $\mathcal W_\ell$ the collection of
$\mathcal W_{(\ell,z)},z\in \mathbb Z+(\ell\!\!\mod 2)/2$ and
$\mathcal W$ the collection of all
$\mathcal W_{(\ell,z)},\ell\in \mathbb Z,z\in \mathbb Z+(\ell\!\!\mod
2)/2$. The law of $\mathcal W$ is denoted $\mathbb P$.

\begin{Definition}
\label{def:Xi}
Given $t>0$, a realization $W$ of $\mathcal W$, a configuration
$\eta\in \Omega$ with particle positions denoted $\{z_{(p',\ell')}\}_{p',\ell'}$ and a
particle label $(p,\ell)$, we let $\Xi_{(p,\ell),\eta,W,t}$ be the set
of all finite (possibly empty) collections $\xi$ of points
$w=((x,s),(p',\ell'))$, satisfying the conditions listed below. Given
$w=((x,s),(p',\ell'))\in \xi$, we call $x$ its space label, $s$ its
time label, and $(p',\ell')$ its particle label.  We impose the
following constraints on $\xi$:
\begin{enumerate}
\item [(I)] If $((x,s),(p',\ell'))\in \xi$ then:
  \begin{itemize}
  \item $0\le s\le t$;
  \item the
  labels $p'$ and $\ell'$ are in $\mathbb Z$ while $x\in \mathbb
  Z+(\ell'\!\!\!\mod 2)/2$;
\item  the realization $W_{(\ell',x)}$  of the Poisson  process $\mathcal W_{(\ell',x)}$  contains the point $s$;
 \end{itemize}
   

\item [(II)] If $\xi\ne\emptyset$, there is a unique  point in $\xi$ with largest space label $x$ and this point has particle label $(p,\ell)$. We note $x_0(\xi)$ the space label of this point. If $\xi=\emptyset$, we set $x_0(\xi):=z_{(p,\ell)}$;
\item [(III)] If $((x,s),(p',\ell'))\in \xi$ and if there exists a
  particle with label $(p'',\ell'')\in I_{(p',\ell')}$ such
  that\footnote{since $x\in \mathbb Z+(\ell'\mod 2)/2$ while
    $z_{(p'',\ell'')}\in \mathbb Z+(\ell''\mod 2)/2$ with
    $|\ell'-\ell''|=1$, we have $|x-z_{(p'',\ell'')}|\ge 1/2$ so the inequality
    $ x \le z_{(p'',\ell'')}$ is strict, if it holds}
  $ x < z_{(p'',\ell'')}$, then there exists a point
  $((x',s'),(p'',\ell''))$ in $\xi$ such that $x'< x,s'\le s$;
\item [(IV)] The point  $((x',s'),(p'',\ell''))$ in the previous item is unique;

\item  [(V)] Conversely, if $((x',s'),(p'',\ell''))\in \xi$ then either
  $x'=x_0(\xi)$ or there exists \[((x,s),(p',\ell'))\in \xi\] with
  $(p'',\ell'')\in I_{(p',\ell')}$, $ x < z_{(p'',\ell'')}$ and such that $x>x',s\ge s'$.

\end{enumerate}
\end{Definition}

\begin{Remark}
\label{rem:II}
Note that, thanks to conditions (II) and (V), in every
$\emptyset\neq\xi\in\Xi_{(p,\ell),\eta,W,t}$ there is a unique point with particle
label $(p,\ell)$. Moreover, such point has the largest time and space coordinates.
\end{Remark}

We can now define our dynamics: for any initial condition $\eta\in\Omega$, any time $t>0$, any particle label $(p,\ell)$ and any realization $W$ of the Poisson point processes $\mathcal W$ we let
\begin{eqnarray}
  \label{eq:3}
z_{(p,\ell)}(t):=\inf_{\xi\in
    \Xi_{(p,\ell),\eta,W,t}} x_0(\xi).
\end{eqnarray}
Note that $z_{(p,\ell)}(0)=z_{(p,\ell)}$ for almost every realization
of the Poisson processes, since almost surely there is no Poisson
point with time coordinate equal to $0$. Also,
$z_{(p,\ell)}(t)\le z_{(p,\ell)}$ (particles move to the left) since
$\Xi_{(p,\ell),\eta,W,t}$ contains the empty set.

In Section \ref{sec:bendefinito} we will prove the following:
\begin{Proposition}
\label{prop:bendefinito} Let $\eta(t)$ denote the configuration where every 
particle $(p,\ell)$ has horizontal position $z_{(p,\ell)}(t)$ defined
by \eqref{eq:3}.  The following holds for almost every realization $W$ of
the Poisson process:
  \begin{enumerate}
 \item for every $(p,\ell)$,  the infimum in \eqref{eq:3} is actually a minimum for every $t>0$.
  \item  for every $t>0$, $\eta(t)\in\Omega$. 
 
  \item the following semi-group property holds: for every $0\le s<t$, for every $(p,\ell)$, 
    \begin{eqnarray}
      \label{eq:semigruppo}
z_{(p,\ell)}(t)=\inf_{\xi\in \Xi_{(p,\ell),\eta(s),\theta_s(W),t-s}} x_0(\xi),
    \end{eqnarray}
with $\theta_s(W)$ the time-translation by $-s$ of $W$. 
\item  for every $(p,\ell)$,
  \begin{eqnarray}
\mathbb P(\exists \xi\in \Xi_{(p,\ell),\eta,W,t}, |\xi|\ge 2 \text{ and }
\xi \text{ realizes the infimum in \eqref{eq:3}})\stackrel{t\to0}=O(t^2).
  \end{eqnarray}
  \end{enumerate}
\end{Proposition}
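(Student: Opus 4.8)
The plan is to prove the four items in sequence, since each one builds on the previous. The key conceptual point is that the set $\Xi_{(p,\ell),\eta,W,t}$ can be analyzed "from the right": condition (II) pins down the unique right-most point as belonging to particle $(p,\ell)$, and conditions (III)--(V) then force a recursive backward structure of "ancestor" points on lines $\ell\pm1$, each triggered by an interlacement violation. First I would set up this recursive/tree-like description carefully: given any $\xi$, the point with space label $x_0(\xi)$ and particle label $(p,\ell)$ may (by (III)) require an ancestor point for each $(p'',\ell'')\in I_{(p,\ell)}$ whose current position lies to the right of $x_0(\xi)$, and by (IV) that ancestor is unique; iterating, one gets a finite rooted structure. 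Conversely (V) says every point must lie in this structure, so $\xi$ is entirely determined by its root's space-time label together with the Poisson data. Finiteness of $\xi$ is built into the definition, but I would want to observe that property (3) of Definition~\ref{def:omega} (the $o(p^2)$ growth of $z_{(p,\ell)}$ to the left) is exactly what guarantees that only finitely many ancestors can ever be forced in time $[0,t]$ --- roughly because the "speed of propagation to the left" is at most linear while inter-particle distances grow at most like $\sqrt{p}$, so a chain of violations cannot reach arbitrarily far left in finite time.

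For item (1), that the infimum in \eqref{eq:3} is attained: I would argue that the space labels $x_0(\xi)$ over $\xi\in\Xi_{(p,\ell),\eta,W,t}$ take values in the discrete set $\{z : z\in\mathbb Z+(\ell\!\!\mod 2)/2,\ z\le z_{(p,\ell)}\}$, and that for the infimum it suffices to look at $\xi$'s whose root time label $s$ ranges over the (a.s.\ locally finite) Poisson set $W_{(\ell,\cdot)}$ below the relevant position; combined with the a.s.\ finiteness of the backward ancestor chain on $[0,t]$ (from property (3) as above), only finitely many candidate values of $x_0(\xi)$ are relevant, so the infimum is a minimum. For item (3), the semigroup property, I would show a bijection between minimizing $\xi\in\Xi_{(p,\ell),\eta,W,t}$ and minimizing pairs $(\xi_1,\xi_2)$ with $\xi_1\in\Xi_{(p,\ell),\eta(s),\theta_s(W),t-s}$: given a minimizer over $[0,t]$, split each backward chain at the first time it crosses level $s$; the portion after $s$ is a valid element of $\Xi_{\cdot,\eta(s),\theta_s(W),t-s}$ precisely because $z_{(\cdot)}(s)$ is itself defined as such an infimum/minimum, so the "$x<z_{(p'',\ell'')}$" tests in (III) and (V) at time $s$ are consistent. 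This gluing/splitting argument is the technical heart and should be stated as a lemma; I would prove the two inclusions of the infima separately ($\le$ by gluing a minimizer on $[0,s]$ below a minimizer on $[s,t]$, $\ge$ by the splitting just described).

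Item (2), $\eta(t)\in\Omega$, I would get as a corollary of the structure: properties (1)--(2) of Definition~\ref{def:omega} (distinctness and interlacing) must be checked, and the interlacing $z_{(p,\ell)}(t)<z_{(p,\ell)}(t)\ldots$ — more precisely $z_{(p-1,\ell+1)}(t)<z_{(p,\ell)}(t)$ whenever the initial interlacement held — follows because if some $\xi$ for $(p,\ell)$ tried to push $(p,\ell)$ past its interlacing neighbor, condition (III) would demand an ancestor point moving that neighbor first, so the constraint is self-enforcing; distinctness of positions within a line is similar. Property (3) of Definition~\ref{def:omega} at time $t$ follows because particles only move left and the left-most excursion in finite time is controlled (again by the speed-of-propagation estimate sketched above, which I expect will be made precise in Section~\ref{sec:importante} — I would cite it or reprove the crude version needed here). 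Finally item (4): if $\xi$ realizes the infimum and $|\xi|\ge 2$, then $\xi$ contains, besides its root, at least one ancestor point, which by (I) and (III) requires \emph{two} distinct Poisson clock rings in $[0,t]$ — the root's ring and the ancestor's ring — located at two specific, a.s.\ distinct sites, and the ancestor's ring must additionally precede the root's; since there are only $O(1)$ relevant sites (bounded by the number of interlacing neighbors, i.e.\ at most $2$, at the initial position) the probability of two prescribed rate-$1$ clocks both ringing in $[0,t]$ is $O(t^2)$, and summing over the finitely many site-configurations that could contribute keeps it $O(t^2)$.

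**Main obstacle.** The delicate point is the gluing/splitting lemma behind item (3): one must check that a $\xi$ assembled from two pieces satisfies \emph{all} of (II)--(V) globally — in particular that uniqueness conditions (II) and (IV) are not violated at the junction $s$, and that condition (V) (no "orphan" points) is preserved — and this requires knowing that the intermediate positions $z_{(\cdot)}(s)$ appearing in the (III)/(V) tests really are the minima from item (1). So items (1) and (3) are somewhat entangled and I would prove them together, establishing the finiteness/attainment facts first and then the gluing. The second most delicate point is making the "only finitely many ancestors in finite time" claim rigorous using property (3) of Definition~\ref{def:omega}; I expect to need a quantitative bound of the form: the left-most space label appearing in any $\xi\in\Xi_{(p,\ell),\eta,W,t}$ that realizes the infimum is, almost surely, $\ge z_{(p,\ell)} - C(t,\omega)$ for some a.s.-finite random $C$, which is really a baby version of the propagation-of-information estimate of Section~\ref{sec:importante}.
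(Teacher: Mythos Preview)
Your outline for items (1)--(3) is essentially the paper's approach. The ``baby propagation estimate'' you anticipate is exactly Proposition~\ref{prop:diametro}: for any interval $[a,a+h)$,
\[
\mathbb P\bigl(\exists\,\xi\in\Xi_{(p,\ell),\eta,W,t}:\xi\subset[a,a+h)\times[0,t],\ \mathrm{diam}(\xi)\ge k\bigr)\le \frac{(4th)^k}{(k!)^2},
\]
and together with Lemma~\ref{lemma:astuto} (which says that if the left-most point of $\xi$ is at or left of $z_{(p-k,\ell)}$ then $\mathrm{diam}(\xi)\ge k+1$) one runs Borel--Cantelli against property~(3) of Definition~\ref{def:omega} to confine all relevant $\xi$ to a finite window. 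Your gluing/splitting description of item~(3) is also what the paper does, with Propositions~\ref{prop:aux1} and~\ref{prop:estratto} doing the bookkeeping for conditions (II)--(V) at the junction.

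Your argument for item~(4), however, has a genuine gap. The claim ``there are only $O(1)$ relevant sites (bounded by the number of interlacing neighbors, i.e.\ at most $2$)'' conflates particle labels with spatial positions. The Poisson clocks are indexed by \emph{positions} $(\ell,z)$, not by particles; the root of a $\xi$ with $|\xi|\ge 2$ can sit at any half-integer $x_0<z_{(p,\ell)}$ on line $\ell$, and each ancestor at any half-integer below it on the neighboring line. The number of such position-pairs is governed by inter-particle spacings and is not $O(1)$; for a general $\eta\in\Omega$ it is not even uniformly bounded. Worse, one must also sum over all chain lengths $k\ge 2$, and there is no a priori reason the resulting series converges without a quantitative input. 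The paper's proof instead bounds the larger event $\{\exists\,\xi:x_0(\xi)<z_{(p,\ell)},\ \mathrm{diam}(\xi)\ge 2\}$ by decomposing according to which interval $[z_{(p-k-1,\ell)},z_{(p-k,\ell)})$ contains the left-most point of $\xi$, invoking Lemma~\ref{lemma:astuto} to force $\mathrm{diam}(\xi)\ge\max(2,k)$, and then applying the $(4th)^k/(k!)^2$ bound; property~(3) of Definition~\ref{def:omega} is precisely what makes the resulting sum over $k$ converge and equal $O(t^2)$. Your sketch needs this same machinery---the two-clock heuristic alone is not enough.
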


\begin{Remark}
  We can now convince the reader that formula \eqref{eq:3} does indeed
  correspond to the informal definition of the dynamics given in
  Section \ref{sec:confspace}. By the ``semi-group'' property
  \eqref{eq:semigruppo}, it is sufficient to consider an infinitesimal
  time interval $[0,\delta]$.  Consider the particle labelled $(p,\ell)$ and let
  $z^-:=\max\{z_{(q,\ell')},(q,\ell')\in I_{(p,\ell)}\}$. By point (4)
  in Proposition \ref{prop:bendefinito}, the probability that $\xi$
  realizing the infimum in \eqref{eq:3} contains more than one point
  is $O(\delta^2)$ and can therefore be disregarded for the computation of the transition rates. If a set
  $\xi\in\Xi_{(p,\ell),\eta,W,\delta}$ contains exactly one point $P$ and
  $x_0(\xi)<z_{(p,\ell)}$, then $P$ must have time coordinate in
  $[0,\delta]$ and its space coordinate $n=x_0(\xi)$ can take any of the
   values in $\mathbb Z+(\ell\!\!\mod 2)/2$ that are
  strictly between $z^-$ and $z_{(p,\ell)}$. The probability that
  there is a Poisson point before time $\delta$ at a given position $n\in (z^-,z_{(p,\ell)})$
  is $\delta+O(\delta^2)$, i.e. particle $(p,\ell)$ jumps to that
  position with rate $1$ and cannot jump to the left of $z^-$. 
This is exactly the informal description of the dynamics we gave above.  
\end{Remark}

\section{Main result: hydrodynamic limit}
\label{sec:mainresult}

 Recall the way the
height function was defined in Section \ref{sec:height}.  Given
$x=(x_1,x_2)\in G^*$ and $t>0$, we let
\begin{eqnarray}
  \label{eq:H}
 H(x,t)=h_{\eta}(x)-J_x(t)
\end{eqnarray}
where $\eta\in\Omega$ is the initial particle configuration
while $J_x(t)$ is the number of particles, on line labelled
$\ell=x_2-x_1$, that cross point $x$ (from right to left) in the time
interval $[0,t]$.  
Note that $H(\cdot,t)$ is nothing but the height function $h_{\eta(t)}$ of the configuration $\eta(t)$ (according to the definition in Section \ref{sec:height}), \emph{up to a global additive constant} $H(0,t)=-J_0(t)\le 0$.

In order to formulate a hydrodynamic limit theorem
we need, rather than a single initial condition, a sequence
$\{\eta^{(L)}\}_{L\in\mathbb N}$ of initial conditions, that
approximate a smooth profile:
\begin{Assumption}
\label{assumption:1}
   The initial condition $\eta^{(L)}\in\Omega$  is such that 
\begin{eqnarray}
  \label{eq:condiniz}
h_{\eta^{(L)}}(x)=\lfloor L \phi_0(x/L)\rfloor \quad \text{for every }\quad x\in \mathbb Z^2,
\end{eqnarray}
where $\phi_0:\mathbb R^2\mapsto \mathbb R$ is a Lipschitz function
such that $\nabla\phi_0(x)\in A$ for almost every $x\in\mathbb R^2$,
where $A$ is a compact subset of $\stackrel\circ{\mathbb T}$.
 \end{Assumption}

 \begin{Remark}
   Recall from the proof of Lemma \ref{lemma:approssimazione} that
   \eqref{eq:condiniz} indeed defines an admissible particle
   configuration in $\Omega$.    One could allow
   for initial conditions $\eta^{(L)}$ whose height function
   approximates $\phi_0$ in a less strong sense, but some uniformity
   in space is needed and it would not be enough for our purposes
   to require just that 
   $(1/L) h_{\eta^{(L)}}(\lfloor x L\rfloor)$ tends pointwise to $\phi_0(x)$ as $L\to\infty$.
 \end{Remark}

We also  define, for $\rho=(\rho_1,\rho_2)\in\stackrel\circ{\mathbb T}$, the ``drift function''
\begin{eqnarray}
  \label{eq:v}
  v(\rho)=\frac1\pi\frac{\sin(\pi \rho_1)\sin(\pi \rho_2)}{\sin(\pi(\rho_1+\rho_2))}>0.
\end{eqnarray}
Note that $v(\cdot)$ is $C^\infty$ in $\stackrel\circ{\mathbb T}$;
also, as was observed in \cite{BF,BF2}, for every
$\rho\in\stackrel\circ{\mathbb T}$ the Hessian matrix
$\{\partial^2_{\rho_i \rho_j}v(\rho)\}_{i,j=1,2}$ has one strictly
positive and one strictly negative eigenvalue.

Our first result assumes further smoothness properties of the initial
profile $\phi_0$:
\begin{Assumption}
  The initial profile $\phi_0$ is $C^2$ and $\sup_{x\in \mathbb R^2}\|H_{\phi_0}(x)\|<\infty$, with $H_{\phi_0}(x)$ the Hessian matrix of $\phi_0$ at $x$. In addition, $\nabla\phi_0(x)\in A$ for every $x$, with $A$  a compact subset of $\stackrel\circ{\mathbb T}$.
\label{assumption:2}
\end{Assumption}

We start with a rather standard fact (see Section \ref{sec:PDE} for a few details):
\begin{Proposition}
\label{prop:PDE}
Let $\phi_0$ satisfy Assumption \ref{assumption:2}. There exists $T>0$
such that the PDE
  \begin{eqnarray}
    \label{eq:PDE}
\left\{
  \begin{array}{c}
    \partial_t \phi(x,t)+v(\nabla \phi(x,t))=0 \\
\phi(x,0)=\phi_0(x)
  \end{array}
\right.
  \end{eqnarray}
  has a twice differentiable classical solution for $(x,t)\in\mathbb R^2\times(0,T]$
  and \[\sup_{x\in\mathbb R^2, t\le
    T}\left\{\|H_{\phi(x,t)}\|+|\partial^2_t\phi(x,t)|\right\}<\infty.\] 
Moreover, 
\begin{eqnarray}
  \label{eq:more}
  \nabla\phi(x,t)\in A \text{ for every }x\in \mathbb R^2, t\le T.
\end{eqnarray}
\end{Proposition}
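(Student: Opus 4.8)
The plan is to build the solution explicitly by the classical method of characteristics, using crucially that $v$ depends only on the gradient of $\phi$ (and not on $\phi$ or $x$), so that characteristics are straight lines and the ``momentum'' $p=\nabla\phi$ is conserved along them. For $\partial_t\phi+v(\nabla\phi)=0$ the characteristic ODEs are $\dot x=\nabla v(p)$, $\dot p=0$, $\dot\phi=p\cdot\nabla v(p)-v(p)$. Hence, starting from $x_0\in\mathbb R^2$ with initial momentum $p_0:=\nabla\phi_0(x_0)$, the characteristic through $x_0$ is the straight line $x(t)=X_t(x_0):=x_0+t\,\nabla v(\nabla\phi_0(x_0))$, along which $\nabla\phi\equiv p_0$, and one sets
\[
\phi\bigl(X_t(x_0),t\bigr):=\phi_0(x_0)+t\,\bigl[\nabla\phi_0(x_0)\cdot\nabla v(\nabla\phi_0(x_0))-v(\nabla\phi_0(x_0))\bigr].
\]
This defines $\phi$ unambiguously as soon as $X_t$ is a bijection of $\mathbb R^2$, which is the only thing one has to control.

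\textbf{Invertibility of the flow map.}
Since $\nabla\phi_0$ takes values in the compact set $A\subset\stackrel\circ{\mathbb T}$, on which (and on a slightly larger compact neighbourhood) $v$ is $C^\infty$, the quantities $\sup_A\|\nabla v\|$ and $\sup_A\|H_v\|$ (with $H_v$ the Hessian of $v$) are finite. Together with $C_0:=\sup_x\|H_{\phi_0}(x)\|<\infty$ from Assumption \ref{assumption:2}, this shows that $x_0\mapsto\nabla v(\nabla\phi_0(x_0))$ is bounded and Lipschitz with constant at most $K:=(\sup_A\|H_v\|)\,C_0$, because its differential is $H_v(\nabla\phi_0(x_0))\,H_{\phi_0}(x_0)$. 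Consequently, for $T<1/(2K)$ (and $T$ arbitrary if $K=0$) and every $t\le T$, the map $X_t=\mathrm{id}+t\,\nabla v(\nabla\phi_0(\cdot))$ is a bi-Lipschitz homeomorphism of $\mathbb R^2$ onto itself, by the standard contraction-mapping argument for Lipschitz perturbations of the identity; since $\nabla\phi_0$ is $C^1$ and the Jacobian $DX_t=I+t\,H_v(\nabla\phi_0)H_{\phi_0}$ is everywhere invertible with $\|(DX_t)^{-1}\|\le(1-tK)^{-1}\le2$, it is $C^1$ with $C^1$ inverse, and $\|D(X_t^{-1})(x)\|\le2$ for all $x$ and all $t\le T$. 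Write $x_0=X_t^{-1}(x)$ in the formula above to get a well-defined $\phi$ on $\mathbb R^2\times[0,T]$.

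\textbf{Conclusions.}
Differentiating $\phi(X_t(x_0),t)$ in $x_0$ and in $t$ and using $\dot X_t=\nabla v(\nabla\phi_0)$, one checks in the standard way (a short computation using symmetry of $H_{\phi_0}$ and $H_v$) that $\nabla\phi(x,t)=\nabla\phi_0(X_t^{-1}(x))$ and $\partial_t\phi(x,t)=-v(\nabla\phi_0(X_t^{-1}(x)))$; hence $\phi$ solves \eqref{eq:PDE} classically, and $\nabla\phi(x,t)=\nabla\phi_0(X_t^{-1}(x))\in A$ for all $(x,t)$, which is \eqref{eq:more}. Since $X_t^{-1}$ is $C^1$ in $x$ and $\nabla\phi_0$ is $C^1$, $\phi$ is $C^2$ in $x$, with $H_\phi(x,t)=H_{\phi_0}(X_t^{-1}(x))\,D(X_t^{-1})(x)$, so $\|H_\phi(x,t)\|\le2C_0$ uniformly in $x$ and $t\le T$. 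Finally, from $\partial_t\phi=-v(\nabla\phi)$ one gets $\partial_t^2\phi=-\sum_i\partial_{p_i}v(\nabla\phi)\,\partial_{x_i}(\partial_t\phi)=\sum_{i,k}\partial_{p_i}v(\nabla\phi)\,\partial_{p_k}v(\nabla\phi)\,\partial^2_{x_ix_k}\phi$, bounded by $(\sup_A\|\nabla v\|)^2\cdot2C_0$. This yields the uniform bound on $\|H_\phi\|+|\partial_t^2\phi|$.

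\textbf{Where the care is needed.}
I do not expect a genuine obstacle: this is a routine local well-posedness statement. The one point that must be handled carefully is the \emph{global} (uniform in $x$) diffeomorphism property of $X_t$, which is exactly what forces Assumption \ref{assumption:2} (the uniform bound $\sup_x\|H_{\phi_0}(x)\|<\infty$, rather than mere $C^2$ regularity): it is this uniformity that makes $X_t$ a global diffeomorphism whose inverse is bounded in $C^1$ uniformly on $\mathbb R^2$, and hence produces the uniform-in-space bounds on the derivatives of $\phi$. One should also record at the outset that $v$ is $C^\infty$ on a compact neighbourhood of $A$ inside $\stackrel\circ{\mathbb T}$, so that $\sup_A\|\nabla v\|$ and $\sup_A\|H_v\|$ are finite; this is immediate because $A$ is a compact subset of the open triangle on which $v$ is smooth.
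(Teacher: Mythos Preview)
Your proof is correct and follows essentially the same route as the paper: both build the solution via the method of characteristics, exploiting that the characteristics are straight lines with conserved momentum, and both verify $\nabla\phi(x,t)=\nabla\phi_0(X_t^{-1}(x))$ to get \eqref{eq:more} and the uniform second-derivative bounds. The only notable difference is in how the global diffeomorphism property of $X_t$ is established: the paper invokes Hadamard's global inversion theorem (proper map plus nonvanishing Jacobian), whereas you use the more elementary contraction-mapping/Lipschitz-perturbation-of-the-identity argument, which has the added benefit of giving the explicit bound $\|D(X_t^{-1})\|\le 2$ directly.
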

Actually, if we let $T_f$ denote the supremum of the values of
$T$ such that the above holds, then $T_f$ coincides with the first
time the method of characteristics ceases to work for  equation \eqref{eq:PDE}, or more precisely the
first time the application $x_0(\cdot,\cdot)$, that to $(x,t)$ associates the starting
point $x_0(x,t)$ of the characteristic line going through $(x,t)$,
ceases to have a Jacobian determinant that is everywhere non-zero. See Section \ref{sec:PDE}.

Our first hydrodynamic limit result is then:
\begin{Theorem}
\label{th:hl}
Let the initial configuration $\eta^{(L)}$ satisfy Assumption \ref{assumption:1}, with $\phi_0$ satisfying Assumption \ref{assumption:2}.
For all $t<T_f,x\in \mathbb R^2$ and for any $\delta>0$ we have
\begin{eqnarray}
  \label{eq:convergenza}
\lim_{L\to\infty}  \mathbb P\left(\left|\frac1L H(\lfloor xL\rfloor, tL)-\phi(x,t)\right|>\delta\right)=0.
\end{eqnarray}
\end{Theorem}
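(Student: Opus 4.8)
The plan is to prove \eqref{eq:convergenza} by a time‑discretisation argument that compares the microscopic dynamics, over short macroscopic time steps, with the PDE semigroup. Since $v$ in \eqref{eq:v} is neither convex nor concave, the solution of \eqref{eq:PDE} has no Hopf--Lax representation and \eqref{eq:convergenza} cannot be deduced from a single variational/super‑additivity argument; what replaces it is the fact that, for $t<T_f$, \eqref{eq:PDE} has an honestly smooth solution (Proposition \ref{prop:PDE}), together with two structural inputs: (a) the \emph{monotonicity} of the height‑function dynamics established in Section \ref{sec:importante}, which lets us sandwich the true height function between those of simpler reference configurations run with the same Poisson clocks; and (b) the explicit translation‑invariant stationary measures $\pi_\rho$ (the ergodic Gibbs measures on lozenge tilings), for which the average leftward current across any point is, by the computation leading to \eqref{eq:v}, exactly $v(\rho)$. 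Throughout I would use $\nabla\phi(\cdot,t)\in A$ for $t\le T_f$ (eq. \eqref{eq:more}): this keeps $v$, $\nabla v$ and the spacings of the relevant configurations under control and, by Lemma \ref{lemma:approssimazione}, makes every slope profile invoked below the height function of an admissible configuration in $\Omega$.

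Fix $x$ and $t_*<T_f$, put $\Delta=t_*/n$, $t_k=k\Delta$, and write $\phi_k=\phi(\cdot,t_k)$, which by Proposition \ref{prop:PDE} is $C^2$ with Hessian and $\partial_t^2\phi$ bounded on $[0,t_*]$. I would prove by induction on $k$ that, for a small auxiliary parameter $\delta>0$ and a constant $C$ depending only on $A$ and on the bounds of Proposition \ref{prop:PDE},
\[
\mathbb P\Big(\sup_{y\in Q}\big|\tfrac1L H(yL,t_kL)-\phi_k(y)\big|>\varepsilon_k\Big)\xrightarrow[L\to\infty]{}0,\qquad \varepsilon_k:=\tfrac1L+kC(\Delta^2+\Delta\delta),
\]
on a fixed macroscopic box $Q$ around $x$; the base case $k=0$ is Assumption \ref{assumption:1}. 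For the inductive step I would restart at time $t_kL$ from $\eta(t_kL)$ via the semigroup property (Proposition \ref{prop:bendefinito}(3)), and, using Lemma \ref{lemma:approssimazione}, compare $\eta(t_kL)$ with the configurations $\eta^\pm$ whose height functions are (integer roundings of) $L(\phi_k(\cdot/L)\pm\varepsilon_k)$; by the inductive bound and monotonicity the height‑function processes started from $\eta^-$, from $\eta(t_kL)$, and from $\eta^+$ stay ordered, the middle one being $H(\cdot,t)$ for $t\ge t_kL$. Since adding a constant to the initial height only shifts the evolution (the dynamics sees gradients only) and likewise the PDE maps $\phi_k+c$ to $\phi_{k+1}+c$, the induction closes — with errors that \emph{add} rather than compound — once one has the one‑step estimate: the evolution over time $\Delta L$ of a configuration approximating the smooth profile $L\phi_k$ produces, at $yL$, the value $L\phi_{k+1}(y)$ up to $O(\Delta^2+\Delta\delta)L+o_{\mathbb P}(L)$, uniformly for $y\in Q$. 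Summing over the $n=t_*/\Delta$ steps gives an error $\tfrac1L+t_*C(\Delta+\delta)+o_{\mathbb P}(1)$; letting $L\to\infty$ and then $\Delta,\delta\to0$ yields \eqref{eq:convergenza}.

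For the one‑step estimate, fix $y$, set $\rho=\nabla\phi_k(y)$, and note $L\phi_{k+1}(y)=L\phi_k(y)-\Delta v(\rho)L+O(\Delta^2L)$ by Taylor's formula and $\partial_t\phi=-v(\nabla\phi)$; so it suffices to show that the number of particles crossing $yL$ during $[t_kL,t_{k+1}L]$ concentrates around $\Delta v(\rho)L$. First, by the speed‑of‑propagation‑of‑information bound of Section \ref{sec:importante}, this current is, with probability $\to1$, unaffected by modifying the configuration at time $t_kL$ outside a box of radius $\kappa L$ about $yL$ with $\kappa=O(\Delta)$; inside that box the smooth profile differs from its tangent plane of slope $\rho$ by only $O(\kappa^2)$. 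Using monotonicity again — together with the ``localisation procedure'' of Section \ref{sec:importante}, which turns this local picture into a comparison with admissible global configurations — I would sandwich the configuration, on that box, between vertical translates of samples from $\pi_{\rho\pm\delta(1,1)}$; the domination holds with high probability because the height fluctuations of these Gibbs measures on a polynomial region are $O(\sqrt{\log L})\ll\delta L$. As $\pi_{\rho\pm\delta(1,1)}$ are stationary, the current over $\Delta L$ has mean $\Delta v(\rho\pm\delta(1,1))L=\Delta v(\rho)L+O(\Delta\delta)L$, and a law of large numbers for the stationary current (a second‑moment estimate, or time‑ergodicity of the stationary process) makes it concentrate — needed only at the single point $yL$, so no union bound over the box is incurred. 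Uniformity over $Q$ then follows by cutting $Q$ into the $O((t_*/\Delta)^2)$ boxes of radius $\kappa$: on each, one uses a single reference point for the current and the uniform Gibbs‑fluctuation bound to control the shape of the stationary comparison configuration, so that only finitely many probabilistic estimates enter for fixed $\Delta$.

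The main obstacle is the localisation step. Because particles perform arbitrarily long jumps and can, in effect, push others arbitrarily far, it is far from automatic that the current across a point over a short time is essentially determined by the configuration in a mesoscopic neighbourhood; proving this, and the companion procedure that replaces a local comparison by a monotonicity‑compatible global one, is the technical heart of the argument. Both rely on the graphical construction of Section \ref{sec:model} and, crucially, on all relevant slopes staying in the compact set $A\subset\stackrel\circ{\mathbb T}$ throughout $[0,t_*]$ (eq. \eqref{eq:more}), so that particle spacings remain tight and $v$, $\nabla v$ bounded — without which the growth speed, and the propagation speed, would be out of control. These ingredients are developed in Section \ref{sec:importante}, and the iteration is carried out in Section \ref{sec:hydro1}.
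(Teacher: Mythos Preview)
Your outline is essentially the paper's own strategy: time discretisation, induction on the number of steps, monotonicity to sandwich with reference profiles, localisation to a small box, comparison with a Gibbs sample $\pi_\rho$, and the stationary current estimate \eqref{eq:flutt}. The overall architecture is right.

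There is, however, one genuine gap in the order of operations. In your inductive step you ``restart at time $t_kL$ from $\eta(t_kL)$'' and then invoke the speed-of-propagation bound to say the current at $yL$ is unaffected by the configuration outside a box of radius $\kappa L$. But Proposition~\ref{prop:propagation} requires the configuration to lie in $\Omega_M$ (uniformly bounded inter-particle spacings), and there is no reason for $\eta(t_kL)$ to be in any $\Omega_M$: as the introduction (Section~\ref{sec:commenti}) stresses, tightness of spacings is \emph{not} known to be preserved by the dynamics. The same problem blocks your use of monotonicity: your inductive hypothesis controls $H(\cdot,t_kL)$ only on $Q$, so the ordering $\eta^-\le \eta(t_kL)\le\eta^+$ is only on $Q$, whereas Theorem~\ref{th:stocdom} needs a global comparison; to use the local version (Theorem~\ref{th:stocdomloc}) you must first localise the dynamics of $\eta(t_kL)$ in $Q$, and that again asks for Proposition~\ref{prop:propagation} on $\eta(t_kL)$.

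The paper circumvents this by localising \emph{once at time zero}, where we do know $\eta^{(L)}\in\Omega_M$: the Poisson realisation $W$ is replaced by $\tilde W$ (Definition~\ref{def:tildew}) supported on a nested staircase of shrinking rectangles $D_k^{(L)}$, and Proposition~\ref{prop:propagation} gives $H(0,s)=\tilde H(0,s)$ for all $s\le tL$ in one stroke. The induction is then run entirely for $\tilde H$; at step $k$ one replaces $\tilde H(\cdot,k\tau L)$ on $D_k^{(L)}$ by the discretisation $H'$ of $\phi(\cdot,k\tau)$ (which \emph{is} in $\Omega_M$ because $\nabla\phi(\cdot,k\tau)\in A$), uses \emph{local} monotonicity (Theorem~\ref{th:stocdomloc}) which only needs the comparison on $D_k^{(L)}$, and then applies the propagation bound to $H'$ to further localise near each $u\in D_{k+1}^{(L)}$. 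This is precisely the ``iterative procedure'' alluded to in Section~\ref{sec:commenti}, and the reason the domains $D_k^{(L)}$ must shrink.

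A minor point: sandwiching with $\pi_{\rho\pm\delta(1,1)}$ is unnecessary and does not by itself give a one-sided domination on a box (tilting the slope raises the profile on one half and lowers it on the other). The paper simply takes a sample of $\pi_\rho$ with $\rho=\nabla\phi(u/L,k\tau)$ and shifts it vertically by $O(\epsilon^2 L)$ to beat the curvature error and the $O((\log L)^2)$ Gibbs fluctuations (Lemma~\ref{lemma:GFF}); this yields the cleaner per-step error $O(\epsilon^2+\tau^2)$ without the extra $\delta$.
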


\subsection{Beyond the singularities: hydrodynamic limit in presence of shocks}

In general, the PDE \eqref{eq:PDE} develops singularities (more
precisely, discontinuities of the gradient) in finite time, even for
smooth initial data. A very interesting question is whether there
exists a weak solution of \eqref{eq:PDE} such that the convergence
\eqref{eq:convergenza} holds even after the time when singularities
arise: a natural candidate is the so-called viscosity solution
\cite{user}.  We solve this question when the initial condition
$\eta^{(L)}$ approximates a profile $\phi_0$ that, in addition to
satisfying Assumption \ref{assumption:1}, is convex.
%

Let $\phi(x,t)$ 
denote the viscosity solution of \eqref{eq:PDE} with initial condition
$\phi_0(\cdot)$
. Since $\phi_0(\cdot)$ 
is convex
, the viscosity solution is given by the following ``Hopf formula''
\cite{BE}:
\begin{eqnarray}
  \label{eq:solvisc}
  \phi(x,t)=\sup_{y\in \mathbb R^2}\{y\cdot x-v(y)t-\phi_0^*(y)\}=[t v+\phi_0^*]^*(x),
\end{eqnarray}
where 
\begin{eqnarray}
  \label{eq:Leg}
  f^*(y)=\sup_{z\in \mathbb R^2}\{z\cdot y-f(z)\}
\end{eqnarray}
denotes  the Legendre-Fenchel transform of a function $f:\mathbb R^2\mapsto \mathbb R\cup\{+\infty\}$.

Let $\mathcal A$ denote the range of
the sub-differential of $\phi_0$: $\mathcal A$ is an almost-convex set (i.e. it contains the interior of its convex hull) 
\cite{Minty} and, in view of Assumption \ref{assumption:1}, its closure $\overline{\mathcal A}$ is a subset of
$ \stackrel\circ{\mathbb T}$.  Formula \eqref{eq:solvisc} makes sense even
though $v(\cdot)$ is not defined outside $\mathbb T$: in fact, note
that $\phi^*_0(y)=+\infty$ outside $\overline{\mathcal
  A}$ and the supremum in \eqref{eq:solvisc} can be restricted to $y\in  \stackrel\circ{\mathbb T} $. 

\begin{Theorem}
\label{th:shock}
Let the initial condition $\eta^{(L)}$  satisfy Assumption \ref{assumption:1}
 and let in addition  $\phi_0:\mathbb R^2\mapsto \mathbb R$ be convex.
 For every $x\in\mathbb R^2, t>0$ and $\delta>0$ one has 
\eqref{eq:convergenza}, where $\phi(x,t)$ is given by \eqref{eq:solvisc}.
\end{Theorem}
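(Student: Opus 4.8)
The plan is to prove the two one-sided bounds
\[
\lim_{L\to\infty}\mathbb P\Big(\tfrac1L H(\lfloor xL\rfloor,tL)<\phi(x,t)-\delta\Big)=0,\qquad
\lim_{L\to\infty}\mathbb P\Big(\tfrac1L H(\lfloor xL\rfloor,tL)>\phi(x,t)+\delta\Big)=0
\]
separately. Both rest on the monotonicity of the dynamics established in Section \ref{sec:importante}: if two configurations are ordered, $h_\eta\le h_{\eta'}$ pointwise, then the evolutions driven by the \emph{same} Poisson data $W$ satisfy $H(\cdot,s)\le H'(\cdot,s)$ for all $s\ge0$. Combined with Theorem \ref{th:hl} and with the variational formula \eqref{eq:solvisc}, this gives the lower bound almost immediately, because a convex $\phi_0$ is a supremum of affine — hence shock-free — profiles. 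The upper bound is the genuinely hard direction and is obtained by mollifying $\phi_0$ \emph{from above} and then pushing Theorem \ref{th:hl} past the shock time by an iteration in time, using that the Hopf formula keeps the solution convex and that both the PDE and the microscopic dynamics enjoy a semigroup property.

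For the lower bound, write $\phi_0(\xi)=\sup_{y\in\mathcal A}\{\xi\cdot y-\phi_0^*(y)\}$, where $\mathcal A$ is the range of $\partial\phi_0$ and $\overline{\mathcal A}\subset\stackrel\circ{\mathbb T}$. Fix $y\in\mathcal A$ and set $\ell_y(\xi):=\xi\cdot y-\phi_0^*(y)$. This affine profile has constant gradient $y\in\stackrel\circ{\mathbb T}$, so it satisfies Assumption \ref{assumption:2}, the PDE \eqref{eq:PDE} has the classical solution $\phi^{(y)}(\xi,s)=\ell_y(\xi)-v(y)s$ for all $s$, and $\nabla\phi^{(y)}\equiv y$, so that $T_f=+\infty$ in this case. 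Let $\eta^{(L,y)}$ be the configuration with $h_{\eta^{(L,y)}}(w)=\lfloor L\ell_y(w/L)\rfloor$; since $\phi_0\ge\ell_y$ everywhere we get $h_{\eta^{(L)}}\ge h_{\eta^{(L,y)}}$ pointwise, hence, coupling through the common $W$, $H(w,s)\ge H^{(y)}(w,s)$ for all $w,s$. Theorem \ref{th:hl} applied to $\ell_y$ gives $\tfrac1L H^{(y)}(\lfloor xL\rfloor,tL)\to x\cdot y-\phi_0^*(y)-v(y)t$ in probability, so $\liminf_L\tfrac1L H(\lfloor xL\rfloor,tL)\ge x\cdot y-\phi_0^*(y)-v(y)t$ in probability for every $y\in\mathcal A$. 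Optimizing over a countable dense subset of $\mathcal A$ and invoking \eqref{eq:solvisc} yields the lower bound.

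For the upper bound, let $\rho_\epsilon$ be a smooth compactly supported mollifier and $\phi_0^\epsilon:=\phi_0*\rho_\epsilon$. By Jensen's inequality and convexity $\phi_0^\epsilon\ge\phi_0$; moreover $\phi_0^\epsilon$ is $C^\infty$ and convex, $\nabla\phi_0^\epsilon$ takes values in $\mathrm{conv}(A)\Subset\stackrel\circ{\mathbb T}$, $\|H_{\phi_0^\epsilon}\|_\infty\le\|\nabla\phi_0\|_\infty\,\|\nabla\rho_\epsilon\|_{L^1}<\infty$, and $\phi_0^\epsilon\to\phi_0$ locally uniformly; hence $\phi_0^\epsilon$ satisfies Assumption \ref{assumption:2} and the corresponding viscosity solutions obey $\phi^\epsilon(x,t)\to\phi(x,t)$ (continuity of $\psi\mapsto[tv+\psi^*]^*$ along locally uniformly convergent, uniformly Lipschitz convex profiles). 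Taking $\eta^{(L,\epsilon)}$ with $h_{\eta^{(L,\epsilon)}}(w)=\lfloor L\phi_0^\epsilon(w/L)\rfloor$, monotonicity gives $\tfrac1L H(\lfloor xL\rfloor,tL)\le\tfrac1L H^\epsilon(\lfloor xL\rfloor,tL)$ on the coupled space, so it suffices to prove the (two-sided) convergence \eqref{eq:convergenza} for the \emph{smooth convex} profile $\phi_0^\epsilon$ at arbitrary $t$ and then let $\epsilon\to0$. For $t<T_f(\phi_0^\epsilon)$ this is Theorem \ref{th:hl}. For larger $t$ one iterates: for $t_1<T_f(\phi_0^\epsilon)$, with probability tending to one the height at time $t_1L$ is within $o(L)$ of $L\phi^\epsilon(\cdot/L,t_1)$; by the Hopf formula \eqref{eq:solvisc} $\phi^\epsilon(\cdot,t_1)$ is again convex and Lipschitz, with $\nabla\phi^\epsilon(\cdot,t_1)\in A$ by \eqref{eq:more}; using the microscopic semigroup property (Proposition \ref{prop:bendefinito}(3)) we restart from $\eta^{(L,\epsilon)}(t_1L)$, mollify $\phi^\epsilon(\cdot,t_1)$ from above, apply Theorem \ref{th:hl} again, and compose. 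Matching this with the PDE identities $\phi^\epsilon(\cdot,t_1+s)=[\,s\,v+\phi^\epsilon(\cdot,t_1)^*\,]^*$ propagates \eqref{eq:convergenza} for $\phi_0^\epsilon$ to all $t$.

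The main obstacle is to make this iteration reach an arbitrary fixed time $t$ while the total error stays $o(1)$. When the convex solution develops a corner at a shock time, mollifying that corner at scale $r$ creates a Hessian of order $1/r$ and hence a shock time of order $r$, so the mollification scales and the durations of the successive steps must be tuned against each other very carefully — a globally valid but crude approximation with the worst-case Lipschitz constant would give useless bounds. This forces the comparison to be performed \emph{locally}: via the finite speed of propagation estimate of Section \ref{sec:importante}, $\tfrac1L H(\lfloor xL\rfloor,tL)$ depends (with probability close to one) only on the initial data in a bounded neighbourhood of $x$, on which the approximating profiles — and the restarted ones at each step — can be kept uniformly good; the same estimate is what allows the convergence produced at each step to be used as a genuine initial datum for the next one (upgrading Theorem \ref{th:hl} to convergence locally uniform in space). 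Carrying out this bookkeeping — juggling Theorem \ref{th:hl}, the quantitative PDE bounds of Proposition \ref{prop:PDE}, monotonicity and the finite propagation speed over the $O(1/\epsilon)$ steps of the iteration — is the technical core of Section \ref{sec:shocks}.
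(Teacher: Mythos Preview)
Your lower bound is essentially the paper's: monotonicity against affine minorants plus Theorem \ref{th:hl} for affine data.

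For the upper bound there is a genuine gap, and you have in fact put your finger on it yourself. Mollifying $\phi_0$ (or the restarted profile $\phi^\epsilon(\cdot,t_1)$) in \emph{real space} at scale $r$ produces a Hessian bound of order $1/r$, hence (by Remark \ref{rem:Tunif}) a shock time $T_f$ of order $r$; the $L^\infty$ error introduced by the mollification is also of order $r$. So each iteration advances time by $\sim r_k$ and costs error $\sim r_k$; reaching a fixed $t$ forces $\sum r_k\sim t$ and the accumulated error is of order $1$, not $o(1)$. The appeal to finite propagation speed and ``local'' comparison does not cure this: near a shock the convex solution genuinely has unbounded Hessian in every neighbourhood, and it is precisely there that the height at $x$ depends on the data, so there is no region where the mollification can be made coarse without paying for it. As written, the iteration does not close.

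The paper circumvents this by perturbing in the \emph{dual} variable rather than in real space. One writes $\phi(\cdot,t)=[w_t]^*$ with $w_t=[tv+\phi_0^*]^{**}$ and replaces $w_t$ by $w_t+\psi^*$, where $\psi^*$ is a fixed smooth function with $-\delta_1\le\psi^*\le0$ on $\mathbb T$ and $H_{\psi^*}\ge\epsilon\mathbb I$. Since $w_s$ is convex for every $s$, the sum $G_{s,t}=tv+w_s+\psi^*$ is \emph{uniformly strictly convex} (lower Hessian bound $\epsilon/2$ for small $t$) regardless of $s$; hence its Legendre transform has a gradient that is Lipschitz with a constant depending only on $\epsilon$. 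This yields a step size $\tau=\tau(\delta_1)$ that is valid at \emph{every} restart, so $K=t/\tau$ is finite and the per-step error $\delta_2$ can be chosen after $K$ is fixed to make $K\delta_2+\delta_1<\delta$. A separate lemma (comparing $[\tau v+w_{(k-1)\tau}+\psi^*]^*$ with $\phi(\cdot,k\tau)$ and with $[w_{k\tau}+\psi^*]^*$) ensures that replacing the running profile by the perturbed one at each step only increases it, so monotonicity applies. This dual-space strict-convexification is the idea your sketch is missing.
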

\begin{Remark}
  The theorem holds also, with an identical proof, if $\phi_0(\cdot)$ is concave instead. Of course,  in this case the
  viscosity solution \eqref{eq:solvisc} has to be modified in the obvious way (both in \eqref{eq:solvisc} and in \eqref{eq:Leg} the $\sup$ becomes an $\inf$).
\end{Remark}

In general it is not possible to solve the variational principle
\eqref{eq:solvisc} explicitly. However, the following qualitative
result shows that in the generic case singularities of the gradient do
appear for sufficiently large  times:
\begin{Proposition}
  Let $\phi_0(\cdot)$ be convex and assume that $\mathcal A$ has an
  non-empty interior. Then, there exists $t_0<\infty$ such that, for every $t>t_0$, the function
  $\phi(\cdot,t):\mathbb R^2\mapsto \mathbb R$ is not everywhere  differentiable.
\label{th:guillaume}
\end{Proposition}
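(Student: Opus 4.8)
The plan is to read everything off the Hopf formula \eqref{eq:solvisc}. Set $g_t:=tv+\phi_0^*$, with the convention $g_t\equiv+\infty$ wherever $\phi_0^*$ is (in particular outside $\overline{\mathcal A}$; recall $v$ is finite and $C^\infty$ on the compact set $\overline{\mathcal A}\subset\stackrel\circ{\mathbb T}$). Then \eqref{eq:solvisc} reads $\phi(\cdot,t)=g_t^*$, and since $g_t$ is proper, lsc (near $\overline{\mathcal A}$ it is the sum of the continuous function $tv$ and the lsc function $\phi_0^*$) and bounded below with compact domain, $g_t^*$ is a finite convex Lipschitz function on $\mathbb R^2$ with $\phi(\cdot,t)=g_t^*=(g_t^{**})^*$. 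So it suffices to exhibit, for $t$ large, a point at which the convex function $g_t^*$ is not differentiable.

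First I would show that $g_t$ becomes genuinely non-convex once $t$ is large. Since $\mathcal A$ has non-empty interior and $\mathcal A\subseteq\operatorname{dom}\phi_0^*$ (so $\operatorname{int}\mathcal A\subseteq\operatorname{int}\operatorname{dom}\phi_0^*$), fix $y_0\in\operatorname{int}\mathcal A$, let $e$ be a unit eigenvector of the Hessian $D^2v(y_0)$ associated with its strictly negative eigenvalue $\mu_-<0$ (such an eigenvalue exists by the $(+,-)$ signature recalled after \eqref{eq:v}), and put $y_\pm:=y_0\pm\delta e$ with $\delta>0$ small enough that $[y_-,y_+]\subset\operatorname{int}\mathcal A$. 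A second-order Taylor expansion gives $v(y_0)-\tfrac12\big(v(y_+)+v(y_-)\big)=-\tfrac12\mu_-\delta^2+o(\delta^2)=:\alpha>0$ for $\delta$ small (henceforth fixed), while convexity of $\phi_0^*$ gives $\beta:=\phi_0^*(y_0)-\tfrac12\big(\phi_0^*(y_+)+\phi_0^*(y_-)\big)\le 0$, with $\beta$ finite because $\phi_0^*$ is locally Lipschitz on $\operatorname{int}\operatorname{dom}\phi_0^*$. Hence for every $t>t_0:=|\beta|/\alpha$ one has $g_t(y_0)-\tfrac12\big(g_t(y_+)+g_t(y_-)\big)=t\alpha+\beta>0$; since $g_t^{**}$ is convex and $\le g_t$, this yields $g_t^{**}(y_0)\le\tfrac12\big(g_t(y_+)+g_t(y_-)\big)<g_t(y_0)$, with $y_0\in\operatorname{int}\operatorname{dom}g_t$.

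Second, I would invoke a purely convex-analytic lemma: if $g:\mathbb R^2\to\mathbb R\cup\{+\infty\}$ is proper, lsc, bounded below with compact domain, and $g^{**}(y_0)<g(y_0)$ for some $y_0\in\operatorname{int}\operatorname{dom}g$, then $g^{**}$ is affine on a non-degenerate segment, hence $g^*$ is non-differentiable at some point. To prove it, take an affine $\ell$ supporting $g^{**}$ at $y_0$ and consider the convex contact set $Z=\{g^{**}=\ell\}\ni y_0$. If $Z=\{y_0\}$, then using continuity of $g^{**}$ at the interior point $y_0$, lower semicontinuity of $g$ together with $g(y_0)>\ell(y_0)$, and the fact that the lsc function $g^{**}-\ell$ is bounded away from $0$ on the compact set $\overline{\operatorname{dom}g}\setminus B(y_0,\eta)$, one checks that $\ell+s\le g$ on all of $\mathbb R^2$ for $s>0$ small, whence $\ell+s\le g^{**}$, contradicting $(\ell+s)(y_0)>g^{**}(y_0)$. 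Therefore $Z$ contains a segment $[a,b]$ with $a\ne b$ on which $g^{**}=\ell$; then the slope $x_0$ of $\ell$ lies in $\partial g^{**}(y)$ for every $y\in[a,b]$, i.e.\ $[a,b]\subseteq\partial(g^{**})^*(x_0)=\partial g^*(x_0)$, so $g^*$ is not differentiable at $x_0$. Applying this to $g=g_t$ for any $t>t_0$ proves the Proposition.

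As for difficulties: the non-convexity input is essentially free from the Hessian signature of $v$, and the convex-analysis lemma is standard in spirit, so the real care is in the bookkeeping — checking lower semicontinuity of $g_t$ up to the boundary of its domain, the finiteness and local Lipschitz behaviour of $\phi_0^*$ at the points where it is evaluated, and that $y_0$ genuinely lies in the interior of $\operatorname{dom}g_t=\operatorname{dom}\phi_0^*$ (this is where $\operatorname{int}\mathcal A\ne\emptyset$, $\overline{\mathcal A}\subset\stackrel\circ{\mathbb T}$, and the almost-convexity of $\mathcal A$ from \cite{Minty} are used), so that the non-differentiability of $g_t^*$ really is a non-differentiability of the viscosity solution $\phi(\cdot,t)$.
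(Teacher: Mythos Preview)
Your proof is correct and follows essentially the same route as the paper: both observe that the mixed Hessian signature of $v$ forces $tv+\phi_0^*$ to be non-convex for large $t$, so that $w_t=(tv+\phi_0^*)^{**}$ is not strictly convex, and then conclude non-differentiability of $\phi(\cdot,t)=w_t^*$. The only difference is that the paper dispatches the convex-analytic step in one line by citing \cite[Th.~11.13]{cf:Rocka} (Legendre transform of a non-strictly-convex function is not everywhere differentiable), whereas you prove it by hand via the contact-set lemma; your version is more self-contained but not a different argument.
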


As a special example of convex initial condition, consider the case
\begin{eqnarray}
  \label{eq:Rie1}
  \phi_0(x)=c \,x\cdot \beta +\psi_0(x\cdot {n})
\end{eqnarray}
where ${ n},\beta\in\mathbb S^1$ are  unit vectors, $c\in\mathbb R$
and
\begin{eqnarray}
  \label{eq:Rie2}
  \psi_0(y)=\left\{
  \begin{array}{ll}
    u_- y,& y\le 0\\
u_+ y, & y\ge0
  \end{array}
\right.
\end{eqnarray}
with $u_-,u_+$ two  real constants.
Of course, we require that 
\begin{eqnarray}
  \label{eq:Rie3}
c\,\beta+u_\pm {n}\in \stackrel\circ{\mathbb T}.
\end{eqnarray}
In this case, the equation \eqref{eq:PDE} is effectively one-dimensional and the viscosity solution \eqref{eq:solvisc} 
 reduces to
\begin{eqnarray}
  \label{eq:Rie5}
  \phi(x,t)=c \,x\cdot \beta +\psi(x\cdot { n},t)
\end{eqnarray}
with $\psi(y,t)$ the unique viscosity solution of 
\begin{eqnarray}
  \label{eq:Rie6}
\left\{
  \begin{array}{l}
\partial_t\psi(y,t)+V(\partial_y\psi(y,t))=0    \\
\psi(y,0)=\psi_0(y),
  \end{array}
\right.
\end{eqnarray}
where 
\begin{eqnarray}
  \label{eq:VV}
  V(s)=v(c \beta+s\, {n}).
\end{eqnarray}
The space derivative $u(y,t):=\partial_y \psi(y,t)$ is the unique
entropy solution of the Riemann problem for the the following
one-dimensional scalar conservation law \cite{HH,Evans}:
\begin{eqnarray}
  \label{eq:Rie7}
\left\{
  \begin{array}{l}
\partial_tu(y,t)+\partial_y V(u(y,t))=0    \\
u(y,0)=u_-{\bf 1}_{y<0}+u_+{\bf 1}_{y>0}
  \end{array}
\right. \quad y\in \mathbb R, t>0.
\end{eqnarray}

From the theory of one-dimensional conservation laws \cite{HH}, we know that the regularity of
$\psi(\cdot,t)$ for $t>0$ depends on the convexity properties of the function
$V(\cdot)$.  Namely (assume to fix ideas that $u_-<u_+$, and an analogous picture holds in the opposite case):
\begin{enumerate}
\item [(a)] if the lower convex envelope $ V^{**}$ of the function $V:u\in[u_-,u_+]\mapsto V(u)$ is strictly
  convex in $[u_-,u_+]$ then $\psi(\cdot,t)$ is smooth for every
  $t>0$ and solves \eqref{eq:Rie6} in the classical sense (this
  corresponds to a ``rarefaction fan'' for \eqref{eq:Rie7}). 
\item [(b)] In instead $ V^{**}$ has one or several flat pieces, then
$\partial_y\psi(\cdot,t)$ has one or several discontinuities also for $t>0$,
corresponding in terms of \eqref{eq:Rie7} to travelling shocks.
\end{enumerate}
The fact that the function
$v:\rho\in \stackrel\circ{\mathbb T}\mapsto v(\rho)$ has everywhere a
Hessian with mixed signature shows that both cases (a) and (b) above
can occur. Actually, in view of Proposition \ref{th:guillaume} above
we see that the only case where the solution with convex initial
profile is differentiable in space for every $t>0$ is when
$\mathcal A$ is a segment and $v$ restricted to that segment is
convex.

Note also that, as soon as
$\mathcal A$ has an non-empty interior, the
solution \eqref{eq:solvisc} is genuinely two-dimensional, in contrast with \eqref{eq:Rie5}.

\section{Proof of Proposition \ref{prop:bendefinito}}
\label{sec:bendefinito}

\subsection{Some preliminary results}

We start with a few useful properties of the dynamics.

\begin{Proposition} 
\label{prop:aux1}
Let $t,W,\eta,(p,\ell)$ be as in Definition \ref{def:Xi}.
If a non-empty finite collection $\xi $ of points $((x,s),(p',\ell'))$ verifies  the
conditions (I)-(III) of Definition \ref{def:Xi}, then there exists $\emptyset\ne\xi'' \subset \xi$
such that $\xi''\in \Xi_{(p,\ell),\eta,W,t}$. 
Moreover one has
 $x_0(\xi)=x_0(\xi'')$.
\end{Proposition}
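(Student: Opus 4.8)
The plan is to take a non-empty collection $\xi$ satisfying (I)--(III) and pare it down, removing ``useless'' points, until conditions (IV) and (V) are also satisfied, all the while keeping the point with the largest space label (which by (II) has particle label $(p,\ell)$) and hence keeping $x_0$ unchanged. First I would address (IV): condition (III) asserts, for each $w=((x,s),(p',\ell'))\in\xi$ that is ``blocked'' by some particle $(p'',\ell'')\in I_{(p',\ell')}$ with $x<z_{(p'',\ell'')}$, the \emph{existence} of a witness point $((x',s'),(p'',\ell''))\in\xi$ with $x'<x$, $s'\le s$; it does not assert uniqueness. So the first step is a pruning: whenever there are several such witnesses for a given $w$ and a given $(p'',\ell'')$, keep only one of them (say the one with the largest space label, to be definite) and discard the others, provided they are not needed as the largest-space-label point. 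Since the discarded points have space label strictly smaller than $x$, which is at most $x_0(\xi)$, this never removes the maximizer, so $x_0$ is preserved and (II) still holds. One must check that this pruning does not destroy (I) or (III): (I) is a pointwise condition on surviving points, so it is automatic; for (III), the only worry is that a discarded point was itself the witness required by (III) for some \emph{other} point $w_1$, but then $w_1$'s requirement is now satisfied by the retained witness only if that retained witness also has space label $<x_1$ and time label $\le s_1$ — which need not hold. This is the main obstacle, see below.

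The cleanest way around this is to not prune greedily but to build $\xi''$ from the top down. Start with the unique maximal-space-label point $w_\star=((x_0(\xi),s_\star),(p,\ell))$ and put it in $\xi''$. Then recursively: for each point $w=((x,s),(p',\ell'))$ already placed in $\xi''$, and for each $(p'',\ell'')\in I_{(p',\ell')}$ with $x<z_{(p'',\ell'')}$, condition (III) for $\xi$ guarantees a witness in $\xi$ with particle label $(p'',\ell'')$, space label $<x$ and time label $\le s$; among all such witnesses \emph{in $\xi$}, select exactly one (say, largest space label, breaking further ties by largest time label) and add it to $\xi''$. Because each added point has strictly smaller space label than the point that called it, and space labels lie in a discrete set bounded above by $x_0(\xi)$ and below — by (I) combined with the constraint $x\ge z_{(p',\ell')}-(\text{something finite})$? — one must argue termination. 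Termination follows because $\xi$ is \emph{finite}: the recursion only ever adds points of $\xi$, never creates new ones, so it halts after at most $|\xi|$ steps. Let $\xi''$ be the resulting subset.

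It then remains to verify $\xi''\in\Xi_{(p,\ell),\eta,W,t}$. Condition (I) is inherited. Condition (II): $w_\star\in\xi''$ has the largest space label in $\xi$, hence a fortiori in $\xi''\subseteq\xi$, and it has particle label $(p,\ell)$; uniqueness of the maximizer in $\xi''$ follows from uniqueness in $\xi$; thus $x_0(\xi'')=x_0(\xi)$, which also gives the final assertion of the Proposition. Condition (III) for $\xi''$: every point of $\xi''$ other than $w_\star$ was added \emph{because} it witnesses condition (III) for its parent, but we need (III) for \emph{every} point of $\xi''$ including the added ones; this holds because the recursion was applied to every point placed in $\xi''$, so whenever a point $w\in\xi''$ is blocked by some $(p'',\ell'')\in I_{(p',\ell')}$, a witness was added at that stage — and that witness lies in $\xi''$ with the required space/time inequalities. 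Condition (IV), uniqueness of the witness: by construction we added exactly one witness per (point, blocking particle) pair, so if $\xi''$ contained two witnesses $((x_1',s_1'),(p'',\ell''))$ and $((x_2',s_2'),(p'',\ell''))$ for the same $w$ and same $(p'',\ell'')$, both would have been \emph{in $\xi$} and both could validly have played the witness role, contradicting that we selected only one — unless one of them entered $\xi''$ for a different reason (as a witness for another point, or as $w_\star$); one must check this cannot create a genuine non-uniqueness violating (IV), and if it threatens to, one performs a final cleanup removing the redundant copy (again safe, as it is not the space-maximizer). Condition (V), the converse: every point of $\xi''$ is either $w_\star$ (whose space label is $x_0(\xi'')$, as required) or was added as the witness for some parent $w=((x,s),(p',\ell'))\in\xi''$ with $(p'',\ell'')\in I_{(p',\ell')}$, $x<z_{(p'',\ell'')}$, and $x>x'$, $s\ge s'$ — which is exactly the content of (V). The main obstacle throughout is the bookkeeping in (IV): ensuring the top-down selection is globally consistent (a point might be reachable along two different chains), and I expect one needs the final-cleanup step, observing that any removal below the maximizer is harmless for (I)--(III) and for $x_0$, to close this gap cleanly.
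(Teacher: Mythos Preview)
Your top-down construction --- seed $\xi''$ with the space-maximizer $w_\star$ and recursively add one witness per (point, blocking-particle) pair --- is a reasonable alternative to the paper's subtractive approach (which starts from all of $\xi$, prunes points violating (V), then iteratively removes one redundant witness at a time to enforce (IV), re-pruning for (V) after each removal). Your construction does deliver (II), (III), (V) and $x_0(\xi'')=x_0(\xi)$ cleanly.

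The gap is in your handling of (IV). The assertion that ``any removal below the maximizer is harmless for (I)--(III)'' is false: removing a point with particle label $(p'',\ell'')$ can destroy condition (III) for a parent that needed precisely that point as its witness. In the scenario you flag --- two points with the same label $(p'',\ell'')$ in $\xi''$, both valid witnesses for some $w$, one added as the selected witness for $w$ and the other for a different parent $w'$ --- it may well be that only one of the two copies is a valid witness for $w'$, and that is the one you must keep; discarding the wrong one breaks (III) at $w'$. The key structural fact, which the paper exploits explicitly, is that a given label $(p'',\ell'')$ lies in $I_{(p',\ell')}$ for \emph{exactly two} particle labels $(p',\ell')$; so when deciding which duplicate to drop, one need only check whether that unique ``other parent'' label is present in the current set and which copy serves it. After the removal one must also discard any newly-orphaned descendants (points now failing (V)); this second pruning \emph{is} safe for (III), since a point that has no parent in the sense of (V) is ipso facto not serving as a witness for anyone in the sense of (III). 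One then iterates, and termination follows from finiteness of $\xi$. Your scheme can be completed along these lines, but it needs this two-parent check and the subsequent orphan pruning, not the blanket cleanup you describe.
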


\begin{Definition}
Given a configuration $\eta\in\Omega$, we say that $((x,s),(p,\ell))\stackrel \eta\mapsfrom((x',s'),(p',\ell'))$ if $(p,\ell)\in I_{(p',\ell')}$, $x'<z_{(p,\ell)}$ and
$s\le s',x\le x'$.
\end{Definition}

\begin{proof}
 We construct $\xi''$ starting from $\xi$ and 
removing redundant points. We do this iteratively:
\begin{enumerate}
\item First we remove any point of $\xi$ that does not verify
  condition (V) and we stop when all remaining points verify it. Thus we obtain a new set $\xi^{(1)}\subset \xi$ that
  is not empty (because the right-most point has not been
  removed). Either $\xi^{(1)}\in \Xi_{(p,\ell),\eta,W,t}$ (in which
  case we set $\xi'':=\xi^{(1)}$) or it satisfies all conditions
  required to be in $ \Xi_{(p,\ell),\eta,W,t}$ except 
  condition (IV);
\item In the latter case, $\xi^{(1)}$ contains a point  $((x,s),(p',\ell'))$ as well as, say,
two distinct points  $((x_1,s_1),(q,m))$ and $((x_2,s_2),(q,m))$ such that
$((x_i,s_i),(q,m))\stackrel \eta\mapsfrom(x,s,p',\ell')$, $i=1,2$.
Call $(q',m')$ the unique particle index, different from $(p',\ell')$,
such that $(q,m)\in I_{(q',m')}$. If there exists a point
$((x',s'),(q',m'))\in \xi^{(1)}$ such that $((x_1,s_1),(q,m))\stackrel
\eta\mapsfrom((x',s'),(q',m'))$ (resp.  $((x_2,s_2),(q,m))\stackrel
\eta\mapsfrom((x',s'),(q',m'))$) then we define
$\tilde\xi^{(1)}\subset \xi^{(1)}$ by removing $((x_2,s_2),(q,m))$
(resp. $((x_1,s_1),(q,m))$). Else, we define $\tilde\xi^{(1)}$ by
removing one of the two points $((x_i,s_i),(q,m))$ (it does not matter which one).

\item We define $\xi^{(2)}\subset \tilde \xi^{(1)}$ by removing all
  points that do not satisfy condition (V). By construction, as was
  the case for $\xi^{(1)}$ in point (1), we see that $\xi^{(2)}$ is
  non-empty and either it belongs to $\Xi_{(p,\ell),\eta,W,t}$ (in
  which case we set $\xi'':=\xi^{(2)}$) or it satisfies all conditions
  required to be in $ \Xi_{(p,\ell),\eta,W,t}$ except for condition (IV).
\item In the latter case, we iterate the procedure described above and
  obtain $\xi^{(j)}$, $j\ge 1$. The procedure stops after a finite
  number $n$ of iterations, since $\xi$ is finite and at each step one removes at least a point. By construction,
  $\xi^{(n)}$ is non-empty, it belongs to $\Xi_{(p,\ell),\eta,W,t}$ and $x_0(\xi^{(n)})=x_0(\xi).$
\end{enumerate}

\end{proof}      
  
\begin{Proposition} 
\label{prop:estratto} If $((x,s),(p',\ell'))\in \xi \in  \Xi_{(p,\ell),\eta,W,t} $, then there exists a non-empty $\xi'\subseteq \xi$ such that $\xi'\in  \Xi_{(p',\ell'),\eta,W,s}$ and the point with right-most space coordinate is $((x,s),(p',\ell'))$.
 \end{Proposition}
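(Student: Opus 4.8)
The plan is to extract from $\xi$ the sub-collection that "feeds into" the point $((x,s),(p',\ell'))$ along the ancestry relation $\stackrel\eta\mapsfrom$, and then verify that it satisfies conditions (I)--(V) with respect to the label $(p',\ell')$ at time $s$. Concretely, I would define $\xi'$ to be the set of all points $w=((x'',s''),(p'',\ell''))\in\xi$ that are connected to $((x,s),(p',\ell'))$ by a (possibly empty) chain $w = w_0 \stackrel\eta\mapsfrom w_1 \stackrel\eta\mapsfrom \cdots \stackrel\eta\mapsfrom w_k = ((x,s),(p',\ell'))$ of points of $\xi$. Since $\xi$ is finite, $\xi'$ is a finite non-empty subset of $\xi$. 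The geometric intuition is that $\xi'$ is exactly the "genealogical sub-tree" below the chosen point, and by the structure of Definition \ref{def:Xi} such a sub-tree should itself be an admissible collection for the particle that sits at its root.

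First I would check condition (I): it is inherited verbatim from $\xi$, with one point to watch, namely that every time label $s''$ appearing in $\xi'$ satisfies $s''\le s$. This follows because along any ancestry chain ending at $((x,s),(p',\ell'))$ the time labels are non-decreasing towards the root (the relation $\stackrel\eta\mapsfrom$ forces $s''\le s'$ when $w''\stackrel\eta\mapsfrom w'$), so every point of $\xi'$ has time label $\le s$. Next, condition (II): I claim $((x,s),(p',\ell'))$ is the unique point of $\xi'$ with the largest space label, so that $x_0(\xi')=x$. Indeed, along any chain, space labels are also non-decreasing towards the root (again by definition of $\stackrel\eta\mapsfrom$, since $x''\le x'$), hence $x$ is maximal in $\xi'$; uniqueness of the maximizer follows because $((x,s),(p',\ell'))$ was already the unique point of $\xi$ with particle label $(p',\ell')$ lying at the top (Remark \ref{rem:II}), and any other point of $\xi'$ with space label $x$ would contradict condition (II) for $\xi$ (there the point $((x_0(\xi),\cdot),(p,\ell))$ has strictly larger space label than all others, unless $((x,s),(p',\ell'))=((x_0(\xi),\cdot),(p,\ell))$, a case that is handled identically). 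Conditions (III) and (IV) are \emph{local} constraints — they involve a point of $\xi'$, a particle $(p'',\ell'')\in I_{(p',\ell')}$ of a point, and the existence/uniqueness of a witness — and since $\xi'$ is downward-closed under $\stackrel\eta\mapsfrom$, any witness required by (III) for a point of $\xi'$ already lies in $\xi'$, and uniqueness is inherited from $\xi$. The only genuinely new verification is condition (V), the "no redundant points" requirement: one must show that every point of $\xi'$ other than the one with space label $x_0(\xi')=x$ is the ancestor of some other point of $\xi'$. But this is exactly how $\xi'$ was built: a point $w\ne ((x,s),(p',\ell'))$ lies in $\xi'$ precisely because it sits on a chain to the root, so it has a successor $w'\in\xi'$ with $w\stackrel\eta\mapsfrom w'$, which is what (V) demands (here one uses that (V) for $\xi$ already guarantees the relevant space/time inequalities strictly, and these are preserved in the sub-collection).

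I expect the main obstacle to be the bookkeeping around condition (V) together with the interaction between conditions (IV) and (V): one has to make sure that when building the ancestry sub-tree, each point of $\xi'$ has \emph{exactly} the witnesses it needs (not too few, which would break (III), and not an ambiguous pair, which would break (IV)), and that the root of the sub-tree is correctly identified as the unique top point so that $x_0(\xi')=x$ as asserted. A subtle point is that the relation $\stackrel\eta\mapsfrom$ is defined via the \emph{fixed} configuration $\eta$ and the inequality $x'<z_{(p,\ell)}$, so one must confirm that the same inequalities that certified $w\stackrel\eta\mapsfrom w'$ inside $\xi$ still certify it inside $\xi'$ — but since $\eta$ and the particle positions $z_{(p,\ell)}$ are unchanged, this is automatic. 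Once (I)--(V) are all verified for $\xi'$ with respect to $(p',\ell')$ at time $s$, we conclude $\xi'\in\Xi_{(p',\ell'),\eta,W,s}$ and, by construction, its right-most point is $((x,s),(p',\ell'))$, as claimed.
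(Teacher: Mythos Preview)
Your approach is correct and is essentially the same construction the paper uses: the paper builds $\xi'$ iteratively, starting from the single point $((x,s),(p',\ell'))$ and at each step adjoining, for every point that fails (III), the unique witness guaranteed by (III)--(IV) for $\xi$; your definition as the downward closure under $\stackrel\eta\mapsfrom$ describes exactly the same set. One small point: your justification of uniqueness in (II) via Remark~\ref{rem:II} and condition (II) for $\xi$ is not quite the right reason; the clean argument is that along any nontrivial chain the space labels are \emph{strictly} increasing (adjacent points have particle labels on neighbouring lines, so their space coordinates differ by at least $1/2$), hence $((x,s),(p',\ell'))$ is the unique point of $\xi'$ with space label $x$.
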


\begin{proof}
Again, we construct $\xi'$ iteratively: this time we start at step $0$ with
$\xi'_0=\emptyset$ and at each step we add a certain number of points taken
from $\xi$, until we stop after a finite number of steps. More
precisely:
\begin{itemize}
\item at step $1$ we add just one point and we set
  $\xi'_1=\{((x,s),(p',\ell'))\}$. Note that this set satisfies all
  properties in Definition \ref{def:Xi}, except possibly property
  (III);
\item for $k\ge1$, either the set $\xi'_k$ satisfies all conditions in Definition \ref{def:Xi} (in which case we set $\xi':=\xi'_k$), or some of the  points added at step $k$ do not satisfy condition (III);
\item in the latter case, let $((x_j^{(k)},s_j^{(k)}),(p_j^{(k)},\ell_j^{(k)})), j\le n^{(k)}$ denote the points that do not satisfy condition (III). One proceeds as follows: for each $j\le n^{(k)}$ and for each $(q,m)\in I_{(p^{(k)}_j,\ell^{(k)}_j)}$
such that $x_j^{(k)}<z_{(q,m)}$, we add the unique point $((x',s'),(q,m))\in \xi$ such that $x'\le x^{(k)}_j,s'\le s^{(k)}_j$. The union of $\xi'_k$ with the added points defines $\xi'_{k+1}$.
\end{itemize}

Since $\xi$ is finite, the procedure stops after a finite number
$\bar k$ of steps; by construction $\xi':=\xi'_{\bar k}$ satisfies all
conditions to be a non-empty element of $\Xi_{(p',\ell'),\eta,W,s}$,
the point $((x,s),(p',\ell'))$ belongs to it and all other points have
space coordinate smaller than $x$ and time coordinate smaller than
$s$.  Note that by construction all points in $\xi'$ satisfy property
(V), because the points $((x',s'),(q,m))$ we add along the procedure
are either the rightmost point $((x,s),(p',\ell'))$ or are such that
there exists already another point $((x'',s''),(q',m'))$ with
$ ((x,s),(p',\ell'))\stackrel \eta\mapsfrom
((x'',s''),(q',m'))$.
Also, property (IV) is automatically satisfied since
$\xi'\subset \xi$.
\end{proof} 
 
\begin{Definition} \label{def:chain} Given $\xi \in
  \Xi_{(p,\ell),\eta,W,t}$ we define a \emph{decreasing path of size
    $k\ge0$ } in $\xi$ to be a sequence
  $((x_1,s_1),(p_1,\ell_1)),...,((x_k,s_k),(p_k,\ell_k))$ of points in
  $\xi$ such that:
\begin{itemize}
\item $(x_i)_{1 \leq i \leq k}$ and  $(s_i)_{1 \leq i \leq k}$ are strictly decreasing.
\item For every $ i>1$, $(p_i,\ell_i)\in I_{(p_{i-1},\ell_{i-1})}$.
\end{itemize}
We  let  $diam(\xi)$ (the \emph{diameter of $\xi$}) be the maximal size of a  decreasing path in $\xi$.

\end{Definition} 
 
Given $\xi\in \Xi_{(p,\ell),\eta,W,t}$ and a subset $R\subset \mathbb R\times \mathbb R^+$, we will say with some abuse of notation that $\xi\subset R$ if the space-time coordinates of each point of $\xi$ belong to $R$.

\begin{Proposition}\label{prop:diametro}
For every $a\in \mathbb R, h\ge0, t\ge0, k\in\mathbb N,\eta\in \Omega$ and particle index $(p,\ell)$ we have:
\begin{eqnarray}
\label{eq:catena}\mathbb P(\exists \xi \in \Xi_{(p,\ell),\eta,W,t}|\xi \subset [a,a+h)\times[0,t], diam(\xi)\ge k)\leq \frac{(4th)^k}{(k!)^2}.
\end{eqnarray}
\end{Proposition}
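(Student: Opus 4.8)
The plan is to bound the probability by a union bound over all possible decreasing paths of size $k$ inside $\xi$, then estimate the probability that a single such path is ``realizable'' by the Poisson processes. First I would observe that if $\xi \in \Xi_{(p,\ell),\eta,W,t}$ has $\mathrm{diam}(\xi)\ge k$, then there exists a decreasing path $((x_1,s_1),(p_1,\ell_1)),\dots,((x_k,s_k),(p_k,\ell_k))$ in $\xi$ with $x_1>x_2>\cdots>x_k$ and $t\ge s_1>s_2>\cdots>s_k\ge 0$, and with $(p_{i},\ell_{i})\in I_{(p_{i-1},\ell_{i-1})}$ for each $i$. The crucial structural point is that, by condition (I) in Definition \ref{def:Xi}, each point $((x_i,s_i),(p_i,\ell_i))$ of the path requires $s_i$ to be an actual point of the Poisson process $\mathcal W_{(\ell_i,x_i)}$; and since consecutive particle labels are constrained by the interlacement relation $I_{(\cdot)}$, the \emph{line} labels $\ell_i$ are essentially determined by the choice of $\ell$ and the successive ``left/down'' choices, so the only genuine freedom is in the $k$ space coordinates $x_1,\dots,x_k$ (each in an interval of length $h$, spaced by half-integers) together with a binary choice at each step of which of the two elements of $I_{(p_{i-1},\ell_{i-1})}$ is taken — this binary choice contributes the factor $2^k$ (hence the $4=2\cdot2$ in $4th$ once combined with the other factor of $2$ from $t$, see below).

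Next I would carry out the probabilistic estimate. Fix the combinatorial data of the path: the sequence of line labels $\ell_1,\dots,\ell_k$ (determined up to the $2^{k}$ branching choices once $\ell_1$ is fixed, and in fact $\ell_1=\ell$), and think of the space coordinates $x_1>\cdots>x_k$ and times $s_1>\cdots>s_k$ as being integrated over. The probability that \emph{some} collection of Poisson points realizes a decreasing path with these lines, with space coordinates lying in $[a,a+h)$ and times in $[0,t]$, is bounded by summing over the at most $h^k$ choices (up to constants from the half-integer spacing; more precisely, one integrates the Poisson intensity) of space coordinates and, for each, the probability that there exist Poisson points with strictly decreasing times $t\ge s_1>\cdots>s_k\ge0$ on the respective clocks. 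Since the clocks on distinct positions $(\ell_i,x_i)$ are independent Poisson processes of rate $1$, the expected number of ordered $k$-tuples $(s_1,\dots,s_k)$ with $s_i\in\mathcal W_{(\ell_i,x_i)}$ and $t\ge s_1>\cdots>s_k\ge 0$ is exactly $\int_{0<s_k<\cdots<s_1<t}ds_1\cdots ds_k=t^k/k!$. Combining: the expected number of decreasing paths of size $k$ with the prescribed branching pattern and with coordinates in the box is at most $(\text{number of space-coordinate choices})\times t^k/k!$; carrying the half-integer lattice spacing carefully and summing over the $2^k$ branching patterns gives $2^k\cdot(2h)^k/k!\cdot t^k/k! = (4th)^k/(k!)^2$. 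By Markov's inequality (the probability that at least one such path exists is at most its expected number), this is the desired bound.

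The main obstacle I anticipate is being careful about exactly how much freedom there is in the space coordinates and line labels, and making sure nothing is double-counted or under-counted. Specifically: (a) one must check that along a decreasing path the line label $\ell_i$ together with the space coordinate $x_i$ really do pin down which Poisson clock is involved, using that $I_{(p,\ell)}=\{(p-1,\ell+1),(p,\ell-1)\}$ so $\ell$ changes by $\pm1$ at each step; (b) the particle labels $p_i$ themselves are not free — once the space coordinate $x_i$ and line $\ell_i$ are fixed, and given $\eta$, the label is essentially forced by the ``left-most particle to the right'' prescription, so $p_i$ should not contribute an extra combinatorial factor; (c) the lattice $\mathbb Z+(\ell\!\!\mod 2)/2$ has spacing $1$ on each line, so an interval of length $h$ contains at most $h+1$ admissible positions, and one has to either absorb the $+1$ or, more cleanly, bound the relevant count by an integral against Lebesgue measure to get exactly $h^k$ (this is where the factor-of-$2$ bookkeeping matters and where I would be most careful). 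Everything else — the Poisson time estimate $t^k/k!$, the union bound, Markov's inequality — is routine once the counting is set up correctly.
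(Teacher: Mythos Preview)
Your approach is correct and essentially identical to the paper's: fix the starting label $(p_1,\ell_1)=(p,\ell)$, sum over the $2^k$ possible chains $(p_i,\ell_i)$ determined by the branching in $I_{(\cdot)}$, count at most $\binom{2h}{k}\le (2h)^k/k!$ strictly decreasing sequences of half-integer positions in $[a,a+h)$, and bound the time-ordering probability by $t^k/k!$. One small correction to your commentary in point (b): the particle labels $p_i$ along a decreasing path are \emph{not} determined by $\eta$ via any ``left-most particle to the right'' rule; they are determined purely combinatorially by the recursion $I_{(p,\ell)}=\{(p-1,\ell+1),(p,\ell-1)\}$ once $(p_1,\ell_1)=(p,\ell)$ and the $\pm1$ branching choices are fixed --- but this is exactly what gives the $2^k$ factor you already accounted for, so the count is unaffected.
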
 
\begin{proof}
We remark first of all that, given $\xi\in \Xi_{(p,\ell),\eta,W,t}$, a
decreasing path of maximal size is necessarily such that
$(p_1,\ell_1)=(p,\ell)$ (otherwise, by property (V) in Definition
\ref{def:Xi}, we could construct a longer decreasing path). Therefore,
for the event in \eqref{eq:catena} to happen, we need a chain of points
$((x_1,s_1),(p_1,\ell_1)),...,((x_k,s_k),(p_k,\ell_k))$ as in Definition
\ref{def:chain}, with $(x_i,s_i)\in [a,a+h)\times[0,t]$ and $(p_1,\ell_1)=(p,\ell)$. Let us fix
the chain $C=(p_1,\ell_1)\dots,(p_k,\ell_k)$. Later we will sum over the $2^k$ possible choices of $C$. 

There are at most $2h$ different half-integer points (possible values
of $x_i$) in the interval $[a,a+h)$. Therefore, there are at most
$\binom{2h}{k}$ choices for the strictly increasing sequence
$(x_i)_{i\le k}$ (this is an overcount, since the $x_i$ alternate
between $\mathbb Z$ and $\mathbb Z+1/2$). Once the chain $C$ and the
sequence $(x_i)_{i\le k}$ are fixed, the event that there exists the
decreasing path $((x_i,s_i),(p_i,\ell_i))$ is the event that there is
a sequence of temporally decreasingly ordered rings of the Poisson
clocks at $x_i$. This event has the same probability as the event that
a single rate-$1$ Poisson clock rings at least $k$ times before time
$t$.

In formulas, 
 \begin{align}
\mathbb P(\exists \xi \in \Xi_{(p,\ell),\eta,W,t}|\xi \subset [a,a+h)\times[0,t], diam(\xi)\ge k)
\leq \sum_C \binom{2h}{k} \sum_{i\geq k} \frac{t^i}{i!}e^{-t}\\
\leq 2^ k\binom{2h}{k} \frac{t^k}{k!}\sum_{i\geq 0} \frac{t^i}{i!}e^{-t}
=2^k\frac{t^k}{(k!)^2}\frac{(2h)!}{(2h-k)!}
\leq 2^k\frac{(2ht)^k}{(k!)^2}
\end{align} 
which proves the claim.
\end{proof}

\subsection{Proof of Proposition \ref{prop:bendefinito}}

\emph{Proof of Claim (1)}.
Let $\epsilon$ be a positive real number such that $0<\epsilon \leq \frac{e^{-3}}{4t}$. We have, from Proposition \ref{prop:diametro}  and Stirling's formula,
 \begin{eqnarray}
\label{eq:BC}\mathbb P(\exists \xi \in \Xi_{(p,\ell),\eta,W,t}|\xi \subset [z_{(p-k,\ell)},z_{(p-k,\ell)}+\epsilon k^2)\times[0,t], diam(\xi)\ge k)\leq C e^{-k}
 \end{eqnarray}
for some absolute constant $C$.
By Borel-Cantelli, there exists an almost surely finite random
variable $k_0$ such that 
\begin{eqnarray}
\label{k0}
\forall k\ge k_0, \nexists \xi \in \Xi_{(p,\ell),\eta,W,t}:\;\xi \subset [z_{(p-k,\ell)},z_{(p-k,\ell)}+\epsilon k^2)\times[0,t], diam(\xi)\ge k.
\end{eqnarray}
 In
addition to this, from the definition of $ \Omega$, there exists
a finite $k_1$ (depending on $\eta$, $(p,\ell)$ and $\epsilon$) such that
\begin{eqnarray}
\label{k1}
 z_{(p-k,\ell)}+\epsilon k^2 >
z_{(p,\ell)}\;\text{ for every } k\ge k_1.
\end{eqnarray}
Define the ``left-most point of $\xi$'' to be the point in $\xi$ with
left-most spatial coordinate. We need the following (see later for the
proof):
\begin{Lemma} \label{lemma:astuto}If the left-most point of
  $\xi\in \Xi_{(p,\ell),\eta,W,t}$ lies (weakly) to the left
  of 
  $z_{(p-k,\ell)}$ for some $k\ge 0$, then ${\rm diam}(\xi)\ge
  k+1$.
  If instead the left-most point of $\xi\in \Xi_{(p,\ell),\eta,W,t}$
  lies (weakly) to the right of $z_{(p,\ell)}-n$ and
  $x_0(\xi)\le z_{(p,\ell)}$, then ${\rm diam}(\xi)\le 2n$.
\end{Lemma}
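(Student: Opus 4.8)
The plan is to treat the two assertions separately; in both one traces, through conditions (III)--(V) of Definition~\ref{def:Xi}, a chain of points of $\xi$ joining its left-most point to the \emph{root} of $\xi$ (its unique point of label $(p,\ell)$, which by condition (II) and Remark~\ref{rem:II} has space label $x_0(\xi)$), and then controls the particle labels along the chain by means of the interlacing inequalities $z_{(r,m)}<z_{(r,m+1)}<z_{(r+1,m)}$ of Definition~\ref{def:omega}(2).

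\emph{First assertion.} We may assume $k\ge1$. Let $w_*=((x_*,s_*),(p_*,\ell_*))$ be the left-most point; since $x_*\le z_{(p-k,\ell)}\le z_{(p-1,\ell)}$ and both particles of $I_{(p,\ell)}$ sit strictly to the right of $z_{(p-1,\ell)}$, condition (III) applied to the root rules out $\xi$ being a single point, so $\xi$ has at least two points, $x_*<x_0(\xi)$, and condition (V) applies to $w_*$. Iterating (V) produces a chain $w_*=w^{(0)},w^{(1)},\dots,w^{(m)}$ in $\xi$ with strictly increasing space labels, weakly increasing time labels, and consecutive labels related by $I$; since the space labels strictly increase and $\xi$ is finite we have $m<\infty$, and since the chain can only stop where the first alternative of (V) holds, $w^{(m)}$ is the root. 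For a.e.\ realization $W$ the Poisson points at distinct positions occur at distinct times, so — read backwards — the chain is a decreasing path of size $m+1$; hence ${\rm diam}(\xi)\ge m+1$ and it suffices to show $m\ge k$.

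Writing $(p^{(i)},\ell^{(i)})$ for the label of $w^{(i)}$, the key estimate is
\[
 z_{(p^{(i)}-j,\;\ell^{(i)})}\;\ge\;z_{(p-(m-i)-j,\;\ell)}\qquad\text{for all }0\le i\le m,\ j\ge 0,
\]
which I would prove by downward induction on $i$ (trivial at $i=m$). The step uses $(p^{(i)},\ell^{(i)})\in I_{(p^{(i+1)},\ell^{(i+1)})}$ together with the two consequences of the interlacing, namely that $z_{(r,m)}$ is strictly increasing in $m$ and that $z_{(r,m-1)}>z_{(r-1,m)}$. I expect the main subtlety to be that one must carry the estimate for \emph{all} shifts $j$ at once: keeping only $j=1$ (which is what is eventually needed) does not close the induction, because the interlacing relates a given index on line $\ell^{(i)}$ to two consecutive indices on line $\ell^{(i+1)}$. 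Finally, since $w_*$ is the left-most point, condition (III) applied to it forces $x_*>z_{(q,m)}$ for both $(q,m)\in I_{(p_*,\ell_*)}$, and both such positions exceed $z_{(p_*-1,\ell_*)}$; combining with the estimate at $(i,j)=(0,1)$ gives $z_{(p-m-1,\ell)}\le z_{(p_*-1,\ell_*)}<x_*\le z_{(p-k,\ell)}$, hence $p-m-1<p-k$, i.e.\ $m\ge k$.

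\emph{Second assertion.} Let $w_1,\dots,w_K$ be a decreasing path of maximal size $K={\rm diam}(\xi)$. As recalled in the proof of Proposition~\ref{prop:diametro}, $w_1$ has label $(p,\ell)$, so it is the root and has space label $x_0(\xi)\le z_{(p,\ell)}$, while the remaining $w_i$ are points of $\xi$ and hence have space label $\ge z_{(p,\ell)}-n$. Along a decreasing path consecutive labels lie on adjacent lines, so the space labels alternate between $\mathbb Z$ and $\mathbb Z+\tfrac12$ while strictly decreasing, hence drop by at least $\tfrac12$ at each of the $K-1$ steps. Moreover, if $K\ge2$ then necessarily $x_0(\xi)<z_{(p,\ell)}$: were $x_0(\xi)=z_{(p,\ell)}$, the root could not be the justifying point in (V) of any other point of $\xi$ — that would require its space label to be $<z_{(q,m)}$ for some $(q,m)\in I_{(p,\ell)}$, whereas both such positions are $<z_{(p,\ell)}$ — and since by (V) every non-root point is linked, through a chain of justifications along which space labels strictly increase, to a point satisfying the first alternative of (V), i.e.\ to the root, this would force $\xi=\{\text{root}\}$, contradicting $K\ge2$. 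Thus for $K\ge2$ we have $x_0(\xi)\le z_{(p,\ell)}-1$, so $\tfrac12(K-1)\le x_0(\xi)-(z_{(p,\ell)}-n)\le n-1$, giving $K\le 2n-1$; and $K=1\le 2n$ trivially.
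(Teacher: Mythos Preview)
Your proof is correct, and for the second assertion it is actually more careful than the paper's one-line argument: you handle the edge case $x_0(\xi)=z_{(p,\ell)}$ explicitly (showing it forces $\xi$ to be a singleton), which is needed to get $K\le 2n$ rather than merely $K\le 2n+1$.

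For the first assertion the overall strategy coincides with the paper's---both build a chain from the left-most point to the root via condition~(V) and then exploit interlacing---but the key step is organized differently. The paper observes that after $d$ steps along the $I$-relation the label must take the explicit form $(p',\ell')=(p-(i+d)/2,\ell+i)$ with $|i|\le d$, lifts the inequality $x\le z_{(p-k,\ell)}$ to line $\ell+i$ in one stroke via $z_{(r,m)}<z_{(r,m+1)}$, and then derives a contradiction with condition~(III) at the left-most point unless $(i+d)/2\ge k$, hence $d\ge k$. You instead carry an inductive position estimate $z_{(p^{(i)}-j,\ell^{(i)})}\ge z_{(p-(m-i)-j,\ell)}$ along the chain; the trade-off is that you must track all shifts $j$ simultaneously (as you rightly note), whereas the paper's parametrization sidesteps this by making the combinatorics of the label explicit up front. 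Both routes are clean; the paper's is slightly shorter, yours avoids the case split on the sign of $i$.
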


Next, we claim that any $\xi$ whose left-most point is to the left of $z_{(p-k,\ell)}$, with $k>\max(k_0,k_1)$, cannot realize the infimum \eqref{eq:3}.
Assume that $k>\max(k_0,k_1)$ and that the space coordinate of the
left-most point of $\xi$ is in $(z_{(p-k-1,\ell)},z_{(p-k,\ell)}]$. Since the diameter of $\xi$ is at least 
$k+1\ge k_0$, by \eqref{k0} the right-most point of $\xi$ has horizontal
coordinate at least $z_{(p-k-1,\ell)}+\epsilon (k+1)^2 >z_{(p,\ell)}$,
see \eqref{k1}. Therefore, every $ \xi\in\Xi_{(p,\ell),\eta,W,t}$ with 
left-most point to the left of $z_{(p-k,\ell)}$, with
$k>\max(k_0,k_1)$, is irrelevant when it comes to the infimum \eqref{eq:3}.

 The only sets $\xi$ of interest for \eqref{eq:3} thus have points
 whose space-time coordinates lie in a finite rectangle
 $R=[a,z_{(p,\ell)}]\times [0,t]$. Such $\xi$ have diameter at most
 $D=2(z_{(p,\ell)}-a)$ (the factor $2$ is because in the definition of decreasing path one has $x_i-x_{i-1}\ge 1/2$) and therefore have particle with line labels
 $\ell $ in the finite set $\mathcal I=\{\ell-D,\dots,\ell+D\}$.
 Since the collection of Poisson processes $\mathcal W_{(\ell,z)},
 \ell\in \mathcal I, z\in [a,z_{(p,\ell)}]$ has only an almost-surely finite
 number of points in the time interval $[0,t]$, the infimum \eqref{eq:3} involves
 (almost surely) only a finite number of $\xi$, and is therefore a
 minimum.

\medskip
\emph{Proof of Claim (2)}. First of all, let us check that $z_{(p,\ell)}(t)\in \mathbb Z+(\ell\!\!\mod 2)/2$, as it should.
This is simply because $x_0(\xi)$ in \eqref{eq:3} belongs to $\mathbb Z+(\ell\!\!\mod 2)/2$, as follows from properties (I)-(II) of Definition 
\ref{def:Xi}. 

Next, we need to prove that the dynamics preserves the order between
particles. To do that, it is enough to prove that
$z_{(p,\ell)}(t)<z_{(p,\ell+1)}(t) $ and
$z_{(p,\ell)}(t)<z_{(p+1,\ell-1)}(t)$. We prove the former inequality
and for the latter an analogous argument works. If
$x_0(\xi)>z_{(p,\ell)}$ for every $\xi\in \Xi_{(p,\ell+1),\eta,W,t}$
then $z_{(p,\ell+1)}(t)>z_{(p,\ell)}\ge z_{(p,\ell)}(t)$ as wished.
If instead there exists $\xi\in \Xi_{(p,\ell+1),\eta,W,t}$ such that
$x_0(\xi)<z_{(p,\ell)}$ then by point (III) in Definition \ref{def:Xi}
there exists $((x,s),(p,\ell))\in \xi$ with $x< x_0(\xi)$. By
Proposition \ref{prop:estratto}, there exists
$\xi'\in \Xi_{(p,\ell),\eta,W,t}$ with $x_0(\xi')=x<x_0(\xi)$.
 Taking the infimum over $\xi\in \Xi_{(p,\ell+1),\eta,W,t}$ allows to
conclude that $z_{(p,\ell)}(t)<z_{(p,\ell+1)}(t) $.

Finally, we need to verify that 
\[
\lim_{p\to-\infty}\frac{z_{(p,\ell)}(t)}{p^2}=0.
\]
This is similar to the proof of Claim (1). Let $0<\epsilon \leq \frac{e^{-3}}{4t}$. We have, from \eqref{eq:BC} with $p=-k$,
 \begin{eqnarray}
\mathbb P(\exists \xi \in \Xi_{(-k,\ell),\eta,W,t}|\xi \subset [z_{(-2k,\ell)},z_{(-2k,\ell)}+\epsilon k^2)\times[0,t], diam(\xi)\ge k)\leq C e^{-k}.
 \end{eqnarray}
By Borel-Cantelli, there exists an almost surely finite random
variable $k_0$ such that 
\begin{eqnarray}
\forall k\ge k_0, \nexists \xi \in \Xi_{(-k,\ell),\eta,W,t}:\;\xi \subset [z_{(-2k,\ell)},z_{(-2k,\ell)}+\epsilon k^2)\times[0,t], diam(\xi)\ge k.
\end{eqnarray}
 Again, from the definition of $ \Omega$, there exists
a finite $k_1$ such that
\begin{eqnarray} z_{(-2k,\ell)} \geq - \epsilon k^2\text{ for every }
  k \ge k_1. \end{eqnarray} By choosing $k \ge \max(k_0, k_1)$ we make
sure that, as seen in the proof of Claim (1), the leftmost point of
any $\xi$ in $\Xi_{(-k,\ell),\eta,W,t}$ is to the right of
$z_{(-2k, \ell)}$. As a consequence,
\begin{eqnarray}  z_{(-k,\ell)}(t) \geq z_{(-2k, \ell)} \geq -\epsilon k^2 \end{eqnarray}
and
\[
0\ge \liminf_{p\to-\infty}\frac{z_{(p,\ell)}(t)}{p^2}\ge -\epsilon.
\]
By taking $\epsilon$ arbitrarily small   we get the result.     

\emph{Proof of Claim (3)}.  First, let us prove that
\[z_{(p,\ell)}(t) \geq \inf_{\xi \in
    \Xi_{(p,\ell),\eta(s),\theta_s(W),t-s}} x_0(\xi). \] Let us take
$\xi$ $\in$ $\Xi_{(p,\ell),\eta,W,t}$ for which the infimum
\eqref{eq:3} is reached, and consider its restriction $\xi'$ to
$[s,t]$, i.e. the subset of points of $\xi$ with time coordinate in
$[s,t]$. We will prove that from $\xi'$ we can construct a path in
$\xi''\in\Xi_{(p,\ell),\eta(s),\theta_s(W),t-s}$ with
$x_0(\xi)=x_0(\xi')=x_0(\xi'')$.  Then we deduce
\begin{equation}
z_{(p,\ell)}(t)=x_0(\xi'')\ge \inf_{\xi \in \Xi_{(p,\ell),\eta(s),\theta_s(W),t-s}}
x_0(\xi)
\end{equation}
as desired. 

In order to construct $\xi''$, we start by shifting temporally $\xi'$
by $-s$. We will verify that it satisfies conditions (I)-(III) from
the Definition \ref{def:Xi} of $\Xi_{(p,\ell),\eta(s),\theta_s(W),t-s}$:
then, the existence of $\xi''$ follows from Proposition
\ref{prop:aux1}.

Conditions (I)-(II) are obvious (recall also Remark \ref{rem:II}), so
we concentrate on (III). For $((x_1,s_1),(p_1,\ell_1))\in \xi'$,
suppose there exists $(p_2,\ell_2) \in I_{(p_1,\ell_1)}$ such that
$ x_1 < z_{(p_2,\ell_2)}(s)$. We also have
$ x_1 \leq z_{(p_2,\ell_2)}(0)$, since
$z_{(p_2,\ell_2)}(s)\le z_{(p_2,\ell_2)}(0)$.
This implies that there exists a point $((x_2,s_2),(p_2,\ell_2))$ in
 $\xi$ such that $ x_2 < x_1$ and $s_2<s_1$.  Two cases can in principle occur:
either $s_2\geq s$, and then $((x_2,s_2),(p_2,\ell_2))\in\xi'$, so
condition (III) is satisfied for point $((x_1,s_1),(p_1,\ell_1))$, or
$s_2<s$.  The latter case is however not possible: indeed, by
Proposition \ref{prop:estratto} we deduce that
$z_{(p_2,\ell_2)}(s)\leq x_2 < x_1$, which contradicts the assumption
$ x_1 < z_{(p_2,\ell_2)}(s)$ we made above.

 \smallskip

 Secondly, let us prove that
 \[z_{(p,\ell)}(t) \leq \inf_{\xi\in
     \Xi_{(p,\ell),\eta(s),\theta_s(W),t-s}} x_0(\xi).  \] To do this,
 we will show that for every $\xi$ in
 $\Xi_{(p,\ell),\eta(s),\theta_s(W),t-s}$ there exists $\xi'$ in
 $\Xi_{(p,\ell),\eta,W,t}$ such that $x_0(\xi)=x_0(\xi')$.  We denote
 $\xi''_{(0)}$ the temporal shift of $\xi$ by $+s$, which already
 verifies conditions (I)-(II) in Definition \ref{def:Xi} to belong to
 $\Xi_{(p,\ell),\eta,W,t}$. If condition (III) is also verified, then the existence of $\xi'$ follows
 from Proposition \ref{prop:aux1}. Otherwise, we modify iteratively $\xi''_{(0)}$ into a configuration $\xi''_{(k)},k\ge1$
 until it satisfies property (III), while at the same time keeping
 $x_0(\xi''_{(k)})$ constant and equal to $x_0(\xi)$, and then we conclude
 again with Proposition \ref{prop:aux1}. More precisely we proceed as
 follows.  If $((x_1,s_1),(p_1,\ell_1))\in \xi''_{(0)}$ and there
 exists $(p_2,\ell_2)\in I_{(p_1,\ell_1)}$ such that
 $ x_1 < z_{(p_2,\ell_2)}(0)$, then two alternatives
 arise: 
\begin{itemize}
\item either $ x_1 < z_{(p_2,\ell_2)}(s)$, and then $\xi''_{(0)}$ already
  contains a (unique) point $((x_2,s_2),(p_2,\ell_2))$ with $s\le s_2\le s_1,x_2\le x_1$,
which guarantees    condition (III);
\item or $  x_1    > z_{(p_2,\ell_2)}(s)$, in which case $z_{(p_2,\ell_2)}(s)\neq z_{(p_2,\ell_2)}(0)$. This means that  $\Xi_{(p_2,\ell_2),\eta,W,s} $ contains
a (non-empty) element $\tilde \xi$ with $x_0(\tilde \xi)=z_{(p_2,\ell_2)}(s)$.
Then, we define $\xi''_{(1)}:=\xi''_{(0)}\cup\tilde\xi$. This way, the point $((x_1,s_1),(p_1,\ell_1))$ now satisfies condition (III), by construction.
\end{itemize}

We iterate this procedure as long as there are points in $\xi''_{(j)}$ that
do not verify condition (III), thereby obtaining a sequence
$\xi''_{(j)},j\ge1$. The iteration necessarily stops after a
finite number $J\le |\xi''_{(0)}|$ of steps, since only the points in $\xi''_{(0)}$ may
possibly not satisfy condition (III) along the procedure.  Now $\xi''_{(J)}$ satisfies
conditions (I)-(III)  and we conclude the existence of $\xi'\in \Xi_{(p,\ell),\eta,W,t}$ 
with $x_0(\xi)=x_0(\xi')$ applying Proposition \ref{prop:aux1}.
\smallskip

\emph{Proof of Claim (4).}  Given $\xi\in \Xi_{(p,\ell),\eta,W,t}$
with $x_0(\xi)<z_{(p,\ell)}$ and ${\rm diam}(\xi)\ge n\ge 2$, let
$x_-(\xi)$ be the space coordinate of the left-most point in
$\xi$. Recall from Lemma \ref{lemma:astuto} that, if
$x_-(\xi)\in [z_{(p-k-1,\ell)},z_{(p-k,\ell)})$ then
${\rm diam}(\xi)>k$. Then,
\begin{multline}
\label{scomposiz}
\mathbb P(A_{(p,\ell)}(n)):=\mathbb P(\exists \xi\in
\Xi_{(p,\ell),\eta,W,t}:x_0(\xi)<z_{(p,\ell)}, {\rm diam}(\xi)\ge n)
\\
=\sum_{k\ge0}\mathbb P(A_{(p,\ell)}(n);x_-(\xi)\in
[z_{(p-k-1,\ell)},z_{(p-k,\ell)}))
\\
\le \sum _{k\ge0}\mathbb P(\exists \xi\in
\Xi_{(p,\ell),\eta,W,t}:\xi\in [z_{(p-k-1,\ell)},z_{(p,\ell)})\times
[0,t],{\rm diam}(\xi)\ge\max(n,k)).
\end{multline}
According to Proposition \ref{prop:diametro}, the latter sum is bounded by 
\begin{equation}
\label{eq:cia}
 \sum _{k\ge0}\frac{(4t (z_{(p,\ell)}-z_{(p-k-1,\ell)}))^{n\vee k}}{((n\vee k)!)^2}.
\end{equation}
By the assumption $\eta\in \Omega$, there exists $k_0=k_0(\eta,p,\ell)$ such that
\[
z_{(p,\ell)}-z_{(p-k-1,\ell)}\le (e^{-3}/4) k^2\quad \text{ for every } k>k_0
\]
which, together with \eqref{eq:cia}, implies that 
\[
\mathbb P(A_{(p,\ell)}(n))\stackrel{t\to0}=O(t^n).
\]
In particular, taking $n=2$, the claim follows (since $|\xi|\ge2$ implies ${\rm diam}(\xi)\ge2$).
\qed

\begin{proof}[Proof of Lemma \ref{lemma:astuto}]
  Call $((x,s),(p',\ell'))$ the left-most point of
  $\xi\in \Xi_{(p,\ell),\eta,W,t}$. By property (V) in Definition
  \ref{def:Xi} there exists in $\xi$ a decreasing path with particle
  indices $(p_d,\ell_d),\dots,(p_0,\ell_0)$ with
  $(p_d,\ell_d)=(p',\ell')$, $(p_0,\ell_0)=(p,\ell)$ and
  $(p_j,\ell_j)\in I_{(p_{j-1},\ell_{j-1})}$. Since
  $I_{(p,\ell)}=\{(p-1,\ell+1),(p,\ell-1)\}$, one sees that
  $(p',\ell')$ is of the form $(p-(i+d)/2,\ell+i)$ for some $d\ge0$
  and $|i|\le d$. Of course, the diameter of $\xi$ is at least $d+1$.


The horizontal coordinate $x$ is by assumption smaller than $ z_{(p-k,\ell)}$. Assume for definiteness
 that $i\ge0$, the reasoning being analogous for $i<0$. Since $z_{(p,\ell)}< z_{(p,\ell+1)}$ for every $(p,\ell)$ by the particle interlacing constraints, we
 see that $x\le z_{(p-k,\ell+i)}$. We claim that $(i+d)/2\ge k$;
 in view of $i\le d$, this implies $d\ge k$ and therefore the desired claim that the diameter of $\xi$ is at least $k+1$. To see that $(i+d)/2\ge k$,
 observe that if we had $(i+d)/2\le k-1$ and $x\le z_{(p-k,\ell+i)}$
 we would have also $x\le z_{(p-(i+d)/2-1,\ell+i)}$ and therefore
 $x< z_{(p-(i+d)/2-1,\ell+i+1)}$. Since $(p-(i+d)/2-1,\ell+i+1)\in
 I_{(p-(i+d)/2,\ell+i)}=I_{(p',\ell')}$, by property (III) in Definition \ref{def:Xi}
 we conclude that $((x,s),(p',\ell'))$ is not the left-most point of
 $\xi$, which is a contradiction. 

To prove the second statement of the Lemma, just observe that in the Definition \ref{def:chain} of decreasing path the horizontal coordinates $x_i$ are strictly decreasing and actually $x_i-x_{i+1}\ge 1/2$.
\end{proof}

\section{Stationary measures, stochastic domination, propagation of information}

\label{sec:importante}
\subsection{Stationary  measures}
\label{sec:stationary}
As proved in \cite{Toninelli2+1}, for any $\rho=(\rho_1,\rho_2)\in \stackrel\circ{\mathbb T}$
the dynamics admits a stationary measure $\pi_\rho$ on $\Omega$, such
that
\begin{itemize}
\item $\pi_\rho$ is translation invariant and ergodic w.r.t. translations
\item the height has average slope $\rho$ under $\pi_\rho$, i.e. for any $x\in G^*$
\[
\pi_\rho(h_\eta(x+e_i)-h_\eta(x))=\rho_i,\;i=1,2, \;e_1=(1,0), e_2=(0,1);
\]
\item $\pi_\rho$ is locally uniform: given any finite subset $A\subset G^*$ and $\bar \eta\in \Omega$, the law $\pi_\rho$ conditioned to the (finite) set 
$E_{\bar \eta}:=\{h_\eta(x)= h_{\bar\eta}(x)$ for every $x\not \in A\}$ is the uniform distribution on $E_{\bar\eta}$.
\end{itemize}
\begin{Remark}
  To be precise, in \cite{Toninelli2+1} the dynamics was defined
  through a different procedure that does not involve  the
  graphical construction explained above nor the variational
  characterization \eqref{eq:3}. More precisely, in
  \cite{Toninelli2+1} one takes $K>0$ and first defines the dynamics
  with ``cut-off'' $K$, where Poisson clocks at positions $(\ell,z)$
  with $\|(\ell,z)\|\ge K$ are ignored (i.e. their rate is set to
  zero). In this case, the process is effectively a Markov chain on a
  finite state space and there is no difficulty in defining the particle
  positions $z^{(K)}_{(p,\ell)}(t)$. Next, one defines
  $z_{(p,\ell)}(t)$ by sending the $K$ to infinity: the limit exists
  because $z^{(K)}_{(p,\ell)}(t)$ is decreasing in $K$.  Such
  definition of $z_{(p,\ell)}(t)$ actually coincides almost surely with
  \eqref{eq:3}: this is because, as we have seen in the proof of Claim
  (1) of Proposition \ref{prop:bendefinito}, there exists an almost-surely finite random variable $K$ such the r.h.s. of 
\eqref{eq:3} is unchanged if the Poisson points with $\|(\ell,z)\|\ge K$ are removed from   $W$.
The graphical construction of the present work is definitely more convenient
for our subsequent goal of proving a hydrodynamic limit.
\end{Remark}

It was proven in \cite[Theorem 3.1]{Toninelli2+1} that in the stationary state
$\pi_\rho$ the interface moves with non-zero average speed $v(\rho)$, i.e. 
\[
\mathbb E_{\pi_\rho}(H(x,t)-H(x,0))=-t v(\rho)
\]
for any $x\in G^*$, with $\mathbb P_{\pi_\rho}$ the law of the process
started from $\pi_\rho$. It was subsequently proven in \cite{CF} that
actually $v(\cdot)$ is the same function as in \eqref{eq:v}.  It was
also proven in \cite[Theorem 3.1]{Toninelli2+1} that fluctuations of
$H(x,t)-H(x,0)$ in the stationary process grow slower than any power
of $t$: for any given $x\in G^*$ and $\eta>0$,
\begin{eqnarray}
  \label{eq:flutt}
\lim_{t\to\infty}  \mathbb P_{\pi_\rho}(|H(x,t)-H(x,0)+v(\rho)t|\ge t^\eta)=0.
\end{eqnarray}

The measures $\pi_\rho$ are in a sense very explicit: as discussed in
\cite{Toninelli2+1}, they are the infinite-volume Gibbs measures on
lozenge tilings of the plane, with prescribed densities
$\rho_1,\rho_2,\rho_3:=1-\rho_1-\rho_2$ for the three types of
lozenges \cite{KOS}. Such measures have a determinantal
representation: $n$-point correlation functions can be expressed as a
determinant of a $n\times n$ matrix, whose elements involve the
inverse of the so-called Kasteleyn matrix of the infinite hexagonal
lattice. In the present work, we will not make use of such
determinantal structure, nor of the fact that height fluctuations
under $\pi_\rho$ tend on large scales to a massless Gaussian field. We
will however need a couple of rougher estimates on the probability of
large height fluctuations.

A first fluctuation estimate we will need is the following:
\begin{Lemma}\cite[Prop. 5.7]{Zero}
  \label{lemma:GFF}
For every $\rho\in \stackrel\circ{\mathbb T}$ and $\epsilon>0$, there exists $c>0$ so that
\begin{eqnarray}
  \pi_\rho(\exists x\in G^*, |x|\le L: |h_\eta(x)-h_\eta(0)-\rho\cdot x|\ge (\log L)^{1+\epsilon})\le
\frac1c e^{-c(\log L)^{1+\epsilon}}.
\end{eqnarray}
\end{Lemma}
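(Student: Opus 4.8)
The plan is to reduce the estimate to a large-deviation bound at a single point and then take a union bound. Since there are at most $CL^2$ points $x\in G^*$ with $|x|\le L$, and since $(\log L)^{1+\epsilon}$ grows much faster than $\log L$, it suffices to show that for every fixed such $x$,
\begin{equation}
\pi_\rho\big(|h_\eta(x)-h_\eta(0)-\rho\cdot x|\ge(\log L)^{1+\epsilon}\big)\le C\,e^{-c'(\log L)^{1+\epsilon}}
\end{equation}
with $c'$ large enough that $CL^2 e^{-c'(\log L)^{1+\epsilon}}$ is still bounded by $\tfrac1c e^{-c(\log L)^{1+\epsilon}}$ for $L$ large. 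By translation invariance of $\pi_\rho$ one has $\mathbb E_{\pi_\rho}[h_\eta(x)-h_\eta(0)]=\rho\cdot x$, so what is needed is a concentration estimate for the centred increment $Y:=h_\eta(x)-h_\eta(0)-\rho\cdot x$ around $0$, at scale $(\log L)^{1+\epsilon}$.

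Next I would express $Y$ through the determinantal structure of $\pi_\rho$. Fixing a monotone lattice path $\gamma$ from $0$ to $x$ in $G^*$ of length $O(|x|)$, the increment $h_\eta(x)-h_\eta(0)$ equals a deterministic constant plus, up to sign, the number of dimers occupying the $O(|x|)$ edges of the hexagonal lattice crossed by $\gamma$; when $x$ lies on the diagonal one may take $\gamma$ to consist only of $(1,1)$-steps, and then by \eqref{eq:grad2} the increment $h_\eta(x)-h_\eta(0)$ is \emph{exactly} the (deterministic) length of $\gamma$ minus the number of particles on a single line in an interval of that length. In every case the relevant points form a sub-process of a determinantal point process whose kernel $K$ — a gauge transform of the inverse Kasteleyn matrix of the infinite hexagonal lattice for the ergodic Gibbs measure of slope $\rho$ — is Hermitian-symmetrizable and satisfies $0\le K\le I$ as an operator, so that $\mathbb E_{\pi_\rho}[e^{\lambda Y}]$ is a Fredholm determinant. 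The crucial input — and the only place where the determinantal structure is genuinely used — is the sub-Gaussian exponential-moment bound
\begin{equation}
\log\mathbb E_{\pi_\rho}\big[e^{\lambda Y}\big]\le C_\rho\,\lambda^2\log(2+|x|),\qquad |\lambda|\le\lambda_0,
\end{equation}
for some $\lambda_0=\lambda_0(\rho)>0$. This says that $Y$ concentrates at the scale of its variance $\mathrm{Var}_{\pi_\rho}(Y)\asymp\log(2+|x|)$, the characteristic logarithmic growth of the two-dimensional Gaussian free field; the logarithm reflects the cancellations along $\gamma$ and is far better than the $O(|x|)$ bound one would get from the dimer count on each individual edge orientation separately (which is why an argument ignoring cancellations cannot reach a deviation as small as $(\log L)^{1+\epsilon}$).

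Granting this, Chernoff's inequality concludes. With $t=(\log L)^{1+\epsilon}$ the unconstrained optimal tilt is of order $t/\log(2+|x|)\asymp(\log L)^{\epsilon}$, which exceeds $\lambda_0$ for $L$ large, so I would simply take $\lambda=\lambda_0$ and use $\log(2+|x|)\le\log(2+L)$ to get
\begin{equation}
\pi_\rho(\pm Y\ge t)\le e^{-\lambda_0 t+C_\rho\lambda_0^2\log(2+L)}\le e^{-\tfrac{\lambda_0}{2}(\log L)^{1+\epsilon}}
\end{equation}
for $L$ large, since $(\log L)^{1+\epsilon}$ dominates $\log L$. Summing over the two signs and over the $O(L^2)$ points $x$ absorbs into constants and yields the assertion with any $c<\lambda_0/2$.

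The main obstacle is the exponential-moment estimate; everything else is routine. A convenient way to obtain it, avoiding the Fredholm-determinant bookkeeping, is the Hough--Krishnapur--Peres--Vir\'ag representation: for a determinantal process with Hermitian kernel the number of points in a finite set is a sum of \emph{independent} Bernoulli variables, so Bernstein's inequality applies with variance exactly $\mathrm{Var}_{\pi_\rho}(Y)$. This works verbatim when $x$ is on the diagonal (where $h_\eta(x)-h_\eta(0)$ is a genuine particle count); for general $x$ one decomposes $\gamma$ into a diagonal stretch and an axis-parallel stretch and treats them separately. In all cases the single non-elementary ingredient is the bound $\mathrm{Var}_{\pi_\rho}(h_\eta(x)-h_\eta(0))=O(\log(2+|x|))$, a classical consequence of the determinantal description of the ergodic Gibbs measures on lozenge tilings and precisely the estimate that \cite{Zero} supplies.
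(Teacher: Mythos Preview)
The paper does not prove this lemma at all: it is stated with the citation \cite[Prop.~5.7]{Zero} and used as a black box. So there is no ``paper's own proof'' to compare against; your sketch is an attempt to reconstruct the argument of \cite{Zero}.

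That said, your outline is essentially the right one and matches how the result is obtained in \cite{Zero}: union bound over the $O(L^2)$ sites, then at each site a sub-exponential concentration estimate for the centred height increment coming from the determinantal structure. The key non-elementary input is indeed the $O(\log|x|)$ variance for $h_\eta(x)-h_\eta(0)$, which follows from the $1/r$ decay of the inverse Kasteleyn kernel together with the fact that the ergodic Gibbs measure is a \emph{projection} determinantal process. Given that, the Hough--Krishnapur--Peres--Vir\'ag representation (sum of independent Bernoullis) plus Bernstein yields exactly the exponential-moment bound you state, and your Chernoff step with $\lambda=\lambda_0$ is correct.

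Two minor comments. First, for the axis-parallel stretch the height increment is the count of a fixed lozenge type along a staircase path, not a particle count on a single line; this is still a marginal of the determinantal process so HKPV applies, but it is worth saying explicitly. Second, once you decompose $\gamma$ into two pieces you are bounding $|Y_1|+|Y_2|$ rather than $|Y_1+Y_2|$; this is harmless here because each piece already has logarithmic variance on its own, but you should note that no cancellation between the two pieces is being used.
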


Also, we recall:
\begin{Lemma}
\label{lemma:A1}
\cite[Lemma A.1]{Toninelli2+1}
Let $I_r$ be the subset of points of the line labelled, say, $\ell=0$, having  horizontal coordinate in $[1,\dots,r]$. 
Given $\eta\in \Omega$, let $N_r=N_r(\eta)$ be the number of particles on $I_r$.

 For any $\lambda>0$, $u>0$ and $\rho\in \stackrel\circ{\mathbb T}$ 
there exists $C=C(\lambda,u,\rho)<\infty$ such that, for every $r\in \mathbb N$,
  \begin{eqnarray}
    \label{eq:Nr}
    \pi_\rho(|N_r-r\rho_3|\ge u r)\le C e^{-\lambda u r}.
  \end{eqnarray}
\end{Lemma}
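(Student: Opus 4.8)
\emph{Sketch of proof.} The plan is to exploit the determinantal structure of the stationary measure $\pi_\rho$, which is otherwise not used in this paper. First, recall (Section~\ref{sec:stationary} and \cite{KOS}) that under $\pi_\rho$ the particles lying on the line $\ell=0$ form a translation-invariant determinantal point process on $\mathbb Z$ whose correlation kernel is the discrete sine kernel
\[
\mathsf K(i,j)=\frac{\sin\big(\pi\rho_3(i-j)\big)}{\pi(i-j)}\quad(i\ne j),\qquad \mathsf K(i,i)=\rho_3,
\]
i.e. the orthogonal projection on $\ell^2(\mathbb Z)$ with Fourier symbol $\mathbbm 1_{[-\pi\rho_3,\pi\rho_3]}$. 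The only properties I shall use are that $\mathsf K$ is a self-adjoint projection, that $\mathsf K(i,i)=\rho_3$, and that $|\mathsf K(i,j)|\le 1/(\pi|i-j|)$ for $i\ne j$; in particular $\mathbb E_{\pi_\rho}[N_r]=\sum_{i=1}^r \mathsf K(i,i)=r\rho_3$ exactly.

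Next I would bound the variance. Since $\mathsf K^2=\mathsf K$ one has $\sum_{j\in\mathbb Z}|\mathsf K(i,j)|^2=\mathsf K(i,i)=\rho_3$, and the general determinantal identity $\mathrm{Var}(N_r)=\mathrm{tr}(\mathsf K|_{I_r})-\mathrm{tr}\big((\mathsf K|_{I_r})^2\big)$ then rewrites as
\[
\sigma_r^2:=\mathrm{Var}_{\pi_\rho}(N_r)=\sum_{i\in I_r}\ \sum_{j\in\mathbb Z\setminus I_r}|\mathsf K(i,j)|^2 .
\]
Bounding $|\mathsf K(i,j)|\le 1/(\pi|i-j|)$ and summing the resulting series over $i\in I_r$ gives $\sigma_r^2\le C(\rho)\log(2+r)$. (Any sub-linear bound $\sigma_r^2=o(r)$ would suffice for what follows, and such a bound could alternatively be quoted from the height-fluctuation estimates under $\pi_\rho$ in \cite{Toninelli2+1}.)

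To conclude I would invoke the Hough--Krishnapur--Peres--Vir\'ag theorem: $N_r$ has the same law as $\sum_k B_k$, where the $B_k$ are independent Bernoulli variables with parameters $\lambda_k\in[0,1]$ equal to the eigenvalues of the compression $\mathsf K|_{I_r}$, so that $\sum_k\lambda_k=r\rho_3$ and $\sum_k\lambda_k(1-\lambda_k)=\sigma_r^2$. Bennett's inequality applied to the independent, centred, $[-1,1]$-valued variables $B_k-\lambda_k$ yields, for every $t>0$,
\[
\pi_\rho\big(|N_r-r\rho_3|\ge t\big)\ \le\ 2\exp\!\big(-\sigma_r^2\,h(t/\sigma_r^2)\big),\qquad h(x)=(1+x)\log(1+x)-x .
\]
Take $t=ur$. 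Because $\sigma_r^2\le C\log(2+r)$ is sub-linear, $t/\sigma_r^2\to\infty$, and since $h(x)\ge\tfrac14 x\log x$ for $x$ large we get $\sigma_r^2\,h(t/\sigma_r^2)\ge\tfrac14\,ur\log\big(ur/\sigma_r^2\big)\ge\tfrac18\,ur\log(ur)\ge\lambda ur$ once $\log(ur)\ge 8\lambda$, i.e. for all $r\ge r_0(\lambda,u)$. Thus $\pi_\rho(|N_r-r\rho_3|\ge ur)\le 2e^{-\lambda ur}$ for $r\ge r_0$, while for $r<r_0$ the left-hand side is at most $1\le e^{\lambda u r_0}e^{-\lambda ur}$; taking $C(\lambda,u,\rho):=\max\{2,e^{\lambda u r_0}\}$ proves \eqref{eq:Nr}. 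The one genuinely non-routine input is the identification of the single-line correlation kernel of $\pi_\rho$ and its $1/|i-j|$ decay (equivalently, the sub-linearity of $\sigma_r^2$); once that is in hand, the HKPV representation plus Bennett's inequality give the conclusion mechanically, and the super-exponential decay in $r$ needed to beat $e^{-\lambda ur}$ for \emph{every} $\lambda$ is exactly the $\log$ hidden inside Bennett's function $h$.
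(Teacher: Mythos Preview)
The paper does not prove this lemma; it is quoted from \cite[Lemma~A.1]{Toninelli2+1}. Your sketch is correct and essentially self-contained. The one step deserving a word of care is the identification of the single-line kernel: for the ergodic Gibbs measures on lozenge tilings the process of vertical lozenges along a fixed line is indeed determinantal with a translation-invariant projection kernel whose Fourier symbol is the indicator of an arc of length $2\pi\rho_3$, but that arc is not in general centred at the origin (its endpoints depend on $\rho_1,\rho_2$ separately, not only on $\rho_3$), so the kernel carries a harmless phase $e^{i\alpha(i-j)}$ in front of the sine. Since you explicitly isolate the three properties you actually use---self-adjoint projection, diagonal value $\rho_3$, and off-diagonal modulus $\le 1/(\pi|i-j|)$---this changes nothing. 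Note that the projection property is what makes your variance computation collapse to a boundary sum and yields $\sigma_r^2=O(\log r)$; without it the variance would be linear and Bennett would only give ordinary exponential tails, insufficient for arbitrary $\lambda$. As you observe, one could alternatively read off the sub-linear variance directly from the height-fluctuation bounds (e.g.\ Lemma~\ref{lemma:GFF} of the present paper), and then HKPV plus Bennett finishes the job regardless. This determinantal-concentration route is the natural one and presumably close in spirit to what \cite{Toninelli2+1} does.
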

In particular, the probability of large particle spacings decays faster than exponential.
 A simple consequence is the following:
 \begin{Corollary}
   \label{cor:A1}
   For any  $\rho\in \stackrel\circ{\mathbb T} $  and $M>1/\rho_3$ there
   exists $C=C(\rho)<\infty$ such that,
   for every particle index $p$ on row $\ell=0$ and every
   $k\in\mathbb N$
 \begin{eqnarray}
   \label{eq:corA1}
   \pi_\rho(z_{(p,0)}-z_{(p-k,0)}> M k)\le C (p\vee k)e^{-k}.
 \end{eqnarray}
 \end{Corollary}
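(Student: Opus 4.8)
The plan is to derive \eqref{eq:corA1} from Lemma \ref{lemma:A1} by relating the event $\{z_{(p,0)}-z_{(p-k,0)}>Mk\}$ to an event about the particle count $N_r$ on an interval of the right length, and then summing a union bound. First I would observe that, since particles on line $\ell=0$ sit at integer positions and are strictly ordered with $z_{(q+1,0)}>z_{(q,0)}$, the event that the $k$ consecutive particles labelled $p-k+1,\dots,p$ span a horizontal distance larger than $Mk$ means that somewhere to the ``left'' of $z_{(p,0)}$ there is an interval of length $\lceil Mk\rceil$ containing at most $k$ particles, i.e. at most $k$ of the $N_r$ counted particles. More precisely, I would decompose over the (random, but bounded by translation invariance) location of $z_{(p,0)}$: by translation invariance of $\pi_\rho$ it suffices to control, for each integer $a$ in a range of size $O(p\vee k)$, the probability that the interval $[a-\lceil Mk\rceil, a]$ contains $k$ or fewer particles. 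Writing $r=\lceil Mk\rceil$, this is exactly the event $\{N_r\le k\}$ for a suitably translated copy of $I_r$, and since $M\rho_3>1$ we have $r\rho_3-k\ge (M\rho_3-1)k =: u_0 k$ with $u_0>0$, so $\{N_r\le k\}\subseteq\{|N_r-r\rho_3|\ge u_0 k\}\subseteq\{|N_r-r\rho_3|\ge (u_0/M) r\}$.

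Next I would apply Lemma \ref{lemma:A1} with $u=u_0/M>0$ and $\lambda$ chosen so that $\lambda u r = \lambda (u_0/M)\lceil Mk\rceil \ge k+\log(\text{number of terms})$; concretely, taking $\lambda$ large enough (depending only on $\rho$, through $u_0$ and $M$) makes each term in the union bound at most, say, $C' e^{-2k}$ or even $C' e^{-\lambda' r}$ with $\lambda'$ as large as we like. The number of values of $a$ that can occur is $O(p\vee k)$: the particle labelled $(p,0)$ lies, with overwhelming probability and certainly within the regime controlled by Lemma \ref{lemma:A1}, within distance $O(p)$ of the origin when $p>0$ (and the window we must scan to the left has width $O(k)$), which yields the prefactor $p\vee k$. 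Summing over these $O(p\vee k)$ translates gives a bound of the form $C(p\vee k)e^{-k}$, which is \eqref{eq:corA1}.

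The main technical point to handle carefully is the localization of the label-to-position correspondence: Lemma \ref{lemma:A1} is stated for the number of particles on a \emph{fixed} interval $I_r$, whereas here the interval is anchored at the random position $z_{(p,0)}$. The clean way around this is to note that the statement is purely about $\pi_\rho$ and is translation invariant, so one can sum over all possible integer anchor points $a$ in a window whose size is controlled. The size of that window is where the factor $p\vee k$ enters: by the convention in Definition \ref{def:omega} that $(0,0)$ is the leftmost particle with non-negative coordinate on line $0$, and using Lemma \ref{lemma:A1} once more (or the already-noted fact that particle spacings have faster-than-exponential tails under $\pi_\rho$), the position $z_{(p,0)}$ is within $O(p)$ of the origin except on an event of probability $Ce^{-cp}\le Ce^{-k}$ when $p\gg k$; on the complementary range the window has width $O(k)$. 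Patching these two contributions together and absorbing constants into $C=C(\rho)$ gives the claimed estimate. The only real ``obstacle'' is bookkeeping the constants and the window size so that the final prefactor is exactly $p\vee k$ and not something worse; the probabilistic input is entirely contained in Lemma \ref{lemma:A1}.
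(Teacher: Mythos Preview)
Your approach is correct and essentially identical to the paper's: both localize $z_{(p,0)}$ within a window of size $M(k\vee p)$ using Lemma \ref{lemma:A1} (the event $z_{(p,0)}\ge M(k\vee p)$ forces $N_{M(k\vee p)}\le p$), and then, conditionally on $z_{(p,0)}<M(k\vee p)$, take a union bound over the $O(k\vee p)$ possible integer anchors for an interval of length $Mk$ containing at most $k$ particles, each term being bounded via Lemma \ref{lemma:A1} by $Ce^{-k}$. The only cosmetic difference is that the paper writes the two-term decomposition explicitly from the outset rather than describing it as ``patching two contributions together''.
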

 \begin{proof}[Proof of Corollary \ref{cor:A1}]
   We have
   \begin{multline}
  \pi_\rho(z_{(p,0)}-z_{(p-k,0)}> M k)\\\le \pi_\rho( z_{(p,0)}\ge M (k\vee p))+\pi_\rho[z_{(p,0)}-z_{(p-k,0)}> M k;  z_{(p,0)}< M (k\vee p)].
   \end{multline}
   On one hand,  recall that the left-most particle on line $0$ with non-negative coordinate is labelled $(0,0)$: then,
$z_{(p,0)}\ge M (k\vee p)$ implies that
   $N_{M(k\vee p)}\le p$ and by Lemma \ref{lemma:A1} this event has
   probability at most $C e^{-k\vee p}$ if $M$ is strictly larger than
   $1/\rho_3$. On the other hand, if $z_{(p,0)}< M (k\vee p)$ and
   $z_{(p,0)}-z_{(p-k,0)}> M k$ then there exists a translation of
   $N_{Mk}$ by $0\le j\le M(k\vee p)$ which contains at most $k$
   particles. Applying again Lemma \ref{lemma:A1} this has probability
   at most $M(k\vee p)C e^{-k}$, where the prefactor comes from the
   union bound on $j$.
 \end{proof}
 \begin{Remark}
\label{rem:bernnonbern}
In the stationary measure $\pi_\rho$ the particle density is $\rho_3$
but, as is clear from the fact that in  Lemma \ref{lemma:A1} one can take $\lambda$ as large as wished, on each line $\ell$ the
particle process is much more ``rigid'' than a Bernoulli
i.i.d. process of density $\rho_3$. In contrast, let us recall that
the translation-invariant stationary measures of the one-dimensional
Hammersley-Aldous-Diaconis process are i.i.d. Bernoulli \cite{FerrariMartin,SeppaH}.
 \end{Remark}

\subsection{Stochastic domination}
\label{sec:stochdom}

Recall that, as in Definition \ref{def:omega}, we fix particle labels
so that the left-most particle on line $\ell=0$, with non-negative
horizontal coordinate, is labeled $(p,\ell)=(0,0)$.  Recall also, from 
Definitions \ref{def:coord} and \ref{def:height}, that the height function is defined at
vertices $x=(x_1,x_2)$ of $G^*$ and that vertex $x$ is on line $\bar\ell(x)$ and has horizontal coordinate $\bar z(x)$.

\begin{Lemma} Take two configurations $\eta,\eta'\in \Omega$.  For $t\ge0$ and
  $x\in G^*$ let $p(x,t)\in\mathbb Z$ be the unique index such that
  $z_{(p(x,t)-1,\bar \ell(x))}<\bar z(x)<z_{(p(x,t),\bar\ell(x))}$ and
  similarly for $p'(x,t)$. Denoting $H(\cdot,t),H'(\cdot,t)$ the height functions at time $t$ with initial conditions $\eta,\eta'$, we have
\begin{eqnarray}
  \label{eq:aaa}
  H(x,t)-H'(x,t)=p'(x,t)-p(x,t)+h_{\eta}(0)-h_{\eta'}(0)
\end{eqnarray}
where $h_\eta(0):=h_\eta(x)|_{x=(0,0)}$. \end{Lemma}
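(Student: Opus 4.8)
The plan is to reduce \eqref{eq:aaa} to a static (time-zero), single-configuration identity and then to check that identity directly against the definition of the height function.

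\emph{Step 1: rewriting $H$ via the bracketing indices.} For $\zeta\in\Omega$ and $x\in G^*$, let $p_\zeta(x)$ be the unique index with $z_{(p_\zeta(x)-1,\bar\ell(x))}<\bar z(x)<z_{(p_\zeta(x),\bar\ell(x))}$, the positions being those of $\zeta$; thus $p(x,0)=p_\eta(x)$ and, since the time-$t$ positions are by definition those of $\eta(t)$, $p(x,t)=p_{\eta(t)}(x)$. In this dynamics particles only jump leftwards (indeed $z_{(q,\ell)}(\cdot)$ is non-increasing, by the semi-group property of Proposition \ref{prop:bendefinito}(3) applied to the empty $\xi$), each jump is across empty sites only, and the order of particles within a line is preserved (Proposition \ref{prop:bendefinito}(2), labels being attached to particles). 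Hence on line $\bar\ell(x)$ each particle crosses the coordinate $\bar z(x)$ at most once, always from right to left, and each such crossing raises $p_{\eta(\cdot)}(x)$ by exactly one (the crossing particle was the leftmost one to the right of $\bar z(x)$, and is replaced in that role by the next one, whose label is larger by one). Therefore $J_x(t)=p(x,t)-p_\eta(x)$ and, by \eqref{eq:H}, $H(x,t)=h_\eta(x)+p_\eta(x)-p(x,t)$, and likewise for $\eta'$. Subtracting, \eqref{eq:aaa} is seen to be equivalent to the assertion that
\[
G_{\eta,\eta'}(x):=\bigl(h_\eta(x)+p_\eta(x)\bigr)-\bigl(h_{\eta'}(x)+p_{\eta'}(x)\bigr)
\]
is independent of $x\in G^*$: indeed at $x=(0,0)$ one has $\bar\ell(0,0)=0$, $\bar z(0,0)=-1/2$, and the labeling convention of Definition \ref{def:omega} forces $p_\eta(0,0)=p_{\eta'}(0,0)=0$, so a constant $G_{\eta,\eta'}$ must equal $h_\eta(0)-h_{\eta'}(0)$.

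\emph{Step 2: the discrete gradient of $h_\zeta+p_\zeta$ is $\zeta$-independent.} The key point is that, for a single $\zeta\in\Omega$, the function $F_\zeta(x):=h_\zeta(x)+p_\zeta(x)$ satisfies
\[
F_\zeta(x+e_1)-F_\zeta(x)=1,\qquad F_\zeta(x+e_2)-F_\zeta(x)=0\qquad\text{for every }x\in G^*.
\]
To prove this, recall $\bar z(x+e_i)=\bar z(x)+\tfrac12$ and $\bar\ell(x+e_1)=\bar\ell(x)-1$, $\bar\ell(x+e_2)=\bar\ell(x)+1$; set $\ell=\bar\ell(x)$ and $p=p_\zeta(x)-1$ as in Definition \ref{def:height}, so that the relevant particle in \eqref{eq:height2} (resp. \eqref{eq:height}) is $(p+1,\ell-1)$ (resp. $(p,\ell+1)$). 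Using the interlacement inequalities $z_{(q,\ell)}<z_{(q,\ell+1)}<z_{(q+1,\ell)}$, their consequence $z_{(q-1,\ell)}<z_{(q,\ell-1)}<z_{(q,\ell)}$, and the fact that $\bar z(x)$, $\bar z(x+e_i)$ and the particle positions of any given line lie on sublattices that are mutually half-integer-shifted, one locates $\bar z(x+e_i)$ relative to the particles of the neighbouring line and finds: for $i=1$, $p_\zeta(x+e_1)=p_\zeta(x)$ if $\Delta_1 h_\zeta(x)=1$ and $p_\zeta(x+e_1)=p_\zeta(x)+1$ if $\Delta_1 h_\zeta(x)=0$; for $i=2$, $p_\zeta(x+e_2)=p_\zeta(x)-1$ if $\Delta_2 h_\zeta(x)=1$ and $p_\zeta(x+e_2)=p_\zeta(x)$ if $\Delta_2 h_\zeta(x)=0$. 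In each of the four cases the asserted increment of $F_\zeta$ follows by adding the (known) increment of $h_\zeta$. I expect this bookkeeping — reconciling the ``to the left / to the right of a particle'' alternatives of \eqref{eq:height}--\eqref{eq:height2} with the half-integer offsets between $G$ and $G^*$ — to be the only genuinely delicate point of the argument.

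\emph{Step 3: conclusion.} Since $G_{\eta,\eta'}(x)=F_\eta(x)-F_{\eta'}(x)$, Step 2 shows that $G_{\eta,\eta'}$ has zero increment along both $e_1$ and $e_2$; as $e_1,e_2$ generate the lattice $G^*\cong\mathbb Z^2$, the function $G_{\eta,\eta'}$ is constant, and its value is $G_{\eta,\eta'}(0,0)=h_\eta(0)-h_{\eta'}(0)$. Plugging this into the identity of Step 1, $H(x,t)-H'(x,t)=G_{\eta,\eta'}(x)+p'(x,t)-p(x,t)=p'(x,t)-p(x,t)+h_\eta(0)-h_{\eta'}(0)$, which is exactly \eqref{eq:aaa}.
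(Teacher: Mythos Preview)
Your proof is correct and follows essentially the same approach as the paper: both reduce to $t=0$ via the identity $J_x(t)=p(x,t)-p(x,0)$ and then verify the static statement by a case analysis on the height gradients $\Delta_i h_\zeta(x)$ to relate $p_\zeta$ at neighbouring points of $G^*$. Your packaging is slightly cleaner---you show directly that $h_\zeta+p_\zeta$ has $\zeta$-independent increments along both $e_1$ and $e_2$, whereas the paper first establishes an explicit formula on line $\bar\ell=0$ using \eqref{eq:grad2} and then inducts on $\bar\ell$ via the $e_2$ direction---but the underlying case analysis is identical.
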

\begin{proof}
  First let us prove the formula for $t=0$ and $\bar \ell(x) =0$. From the definition of the gradient of the height function
  (and more precisely from \eqref{eq:grad2}),
\[ \begin{aligned} H(x,0)=h_{\eta}(0)+\bar z(x)-p(x,0) +1/2\\
  H'(x,0)=h_{\eta'}(0)+\bar z(x)-p'(x,0) +1/2
\end{aligned} \]
so that \eqref{eq:aaa} holds for $t=0$ and $\bar \ell(x)=0$.  Next we 
 prove \eqref{eq:aaa}  for $t=0$ and $\bar
\ell(x)\ne0$.
Let us proceed by induction: suppose the claim is true for $\bar
\ell(x)=k$ and we want to prove it for $\bar\ell(x)=k+1$ (the induction from $k$ to $k-1$ works the same way).
If $\bar\ell(x)=k+1$, let $y=x-(0,1)$ (recall that $y$ is on line $k$ if $x$ is on line $k+1$).  The following  cases can arise (see Figure \ref{fig:5}): 
\begin{figure}
\includegraphics[width=12cm]{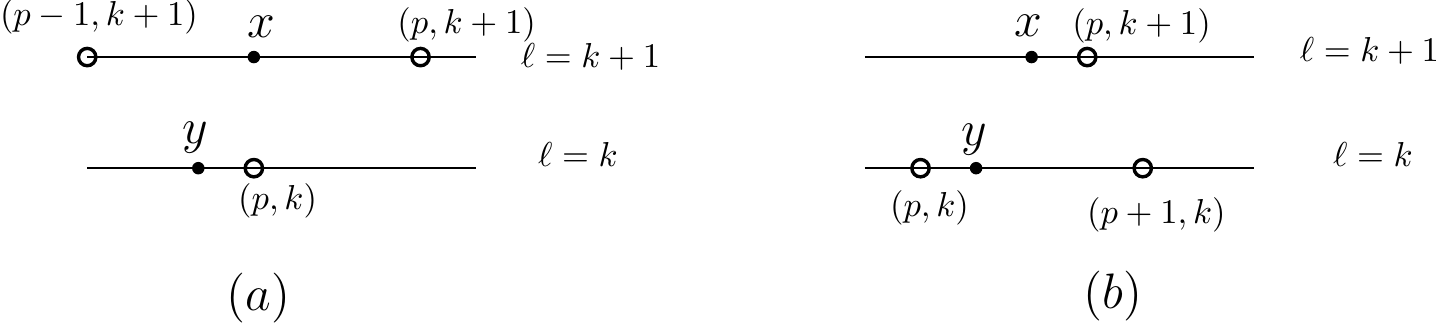}
  \caption{In drawing (a) a particle configuration such that $p(y,0)=p(x,0)$ while in drawing (b) a configuration where $p(y,0)=p(x,0)+1$.
 }
\label{fig:5}
\end{figure}

\begin{itemize}
\item either
  \begin{eqnarray}
    \label{eq:caso1}
 p(x,0)=p(y,0)\quad\text{ and }\quad  p'(x,0)=p'(y,0)  .
  \end{eqnarray}
  We have then
  $ H(y,0)-H'(y,0)=p'(x,0)-p(x,0)+h_{\eta}(0)-h_{\eta'}(0)$ by
  the induction hypothesis. On the other hand, from \eqref{eq:height},
  we see that \eqref{eq:caso1} implies
  $H(y,0)=H(x,0)$, $H'(y,0)=H'(x,0)$. The claim follows.
\item or
  \begin{eqnarray}
    \label{eq:caso2}
    p(y,0)=p(x,0)+1\quad\text{ and }\quad p'(y,0)=p'(x,0)+1.
  \end{eqnarray}
Again by induction we deduce
  $ H(y,0)-H'(y,0)=p(x,0)-p'(x,0)+h_{\eta}(0)-h_{\eta'}(0)$. In this case, \eqref{eq:height} implies that both $H(\cdot,0)$ and $H'(\cdot,0)$ increase by $1$ when going from $y$ to $x$  and we get the result.
\item finally,
  \begin{eqnarray}
    \label{eq:caso3}
    p(y,0)=p(x,0)+1\quad\text{ and }\quad  p'(y,0)=p'(x,0)
  \end{eqnarray}
  or conversely  $p(y,0)=p(x,0)$ and $p'(y,0)=p'(x,0)+1$. We leave this case to the reader.
\end{itemize}

\smallskip

Finally, we prove the formula for $t>0$. The height difference
$H(x,0)-H(x,t)$ equals the number of particles that cross $x$ in the
time interval $[0,t]$, so
\begin{eqnarray}
  \label{eq:aaaa}
H(x,t)=H(x,0) - (p(x,t)-p(x,0)).  
\end{eqnarray}
 Then, \eqref{eq:aaa} follows from the fact that it holds for $t=0$ plus \eqref{eq:aaaa}.
\end{proof}
\begin{Theorem} [Stochastic domination]
\label{th:domina}
\label{th:stocdom} Let $\eta$ and $\eta'$ be two initial conditions in
$\Omega$ such that $ h_\eta(x)\leq h_{\eta'}(x)$ for every $x\in G^*$,
and denote $H(\cdot,t),H'(\cdot,t)$ the respective height functions
for the coupled evolutions that use the same Poisson process
realization $ W$. Then,
\[H(x,t)\leq H'(x,t) \; \text{ for every } x\in G^*, t\ge 0.\]

\end{Theorem}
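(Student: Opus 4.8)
The plan is to reduce the statement about height functions to a monotonicity property of the particle positions $z_{(p,\ell)}(t)$ themselves, using the combinatorial identity \eqref{eq:aaa}. Indeed, \eqref{eq:aaa} tells us that $H(x,t)-H'(x,t)=p'(x,t)-p(x,t)+\text{const}$, where the constant $h_\eta(0)-h_{\eta'}(0)$ is $\le 0$ by hypothesis. So it suffices to show that $p(x,t)\ge p'(x,t)$ for every $x\in G^*$ and every $t\ge 0$; equivalently, that for every line $\ell$ and every half-integer $z$, the number of particles of $\eta(t)$ strictly to the left of $(\ell,z)$ is at least the number of particles of $\eta'(t)$ strictly to the left of $(\ell,z)$. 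Since the labels are frozen along the dynamics, this is in turn equivalent to the ordering $z_{(p,\ell)}(t)\le z'_{(p,\ell)}(t)$ for all $p,\ell,t$, provided one first checks that at $t=0$ the label conventions are consistent — here one uses the convention fixing particle $(0,0)$ on line $0$, together with $h_\eta(0)\le h_{\eta'}(0)$, to see that at $t=0$ the hypothesis $h_\eta\le h_{\eta'}$ already gives $z_{(p,\ell)}\le z'_{(p,\ell)}$ for all $(p,\ell)$.

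The heart of the matter is therefore: \emph{if $z_{(p,\ell)}\le z'_{(p,\ell)}$ for all $(p,\ell)$, then the same holds at all later times for the coupled evolutions driven by the same $W$.} I would prove this directly from the variational formula \eqref{eq:3}. Fix $(p,\ell)$ and $t$, and let $\xi\in\Xi_{(p,\ell),\eta',W,t}$ realize the minimum, so $z'_{(p,\ell)}(t)=x_0(\xi)$. The goal is to produce $\tilde\xi\in\Xi_{(p,\ell),\eta,W,t}$ with $x_0(\tilde\xi)\le x_0(\xi)$, which gives $z_{(p,\ell)}(t)\le z'_{(p,\ell)}(t)$. The natural candidate is to keep the \emph{same} set of Poisson points (same space-time-label triples) and check that it still satisfies conditions (I)--(V) of Definition \ref{def:Xi} relative to the configuration $\eta$. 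Conditions (I), (II), (IV), (V) only involve $W$, the labels, and the combinatorial structure of $I_{(p',\ell')}$, so they are unaffected by replacing $\eta'$ by $\eta$. The only condition that references the initial positions is (III): if $((x,s),(p',\ell'))\in\xi$ and $x<z_{(p'',\ell'')}$ for some $(p'',\ell'')\in I_{(p',\ell')}$, one needs a point $((x',s'),(p'',\ell''))\in\xi$ with $x'<x,s'\le s$. But since $z_{(p'',\ell'')}\le z'_{(p'',\ell'')}$, the inequality $x<z_{(p'',\ell'')}$ implies $x<z'_{(p'',\ell'')}$, so the required point is already present in $\xi$ because $\xi\in\Xi_{(p,\ell),\eta',W,t}$. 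Hence $\xi\in\Xi_{(p,\ell),\eta,W,t}$ (or, if (II) needs adjusting at $t=0$ because $x_0$ is defined via $z_{(p,\ell)}$ rather than $z'_{(p,\ell)}$, one invokes Proposition \ref{prop:aux1} to extract a valid subset), and $x_0(\tilde\xi)=x_0(\xi)$. Taking the infimum over $\xi$ yields $z_{(p,\ell)}(t)\le z'_{(p,\ell)}(t)$.

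The main subtlety — and the step I would be most careful about — is precisely the interaction between condition (II) (which pins $x_0(\xi)$ to the initial position of the target particle when $\xi=\emptyset$ or when no Poisson point for $(p,\ell)$ has fired) and the fact that $\eta$ and $\eta'$ have different initial positions. When $\xi$ is nonempty with a genuine firing of particle $(p,\ell)$, there is no issue; the delicate case is the ``empty'' or ``no-move'' branch, where one must simply note $z_{(p,\ell)}\le z'_{(p,\ell)}$ directly. One also has to make sure the monotone coupling is consistent with the label-freezing convention at all times, which is where \eqref{eq:aaa} does the bookkeeping: since labels never change, the inequality $z_{(p,\ell)}(t)\le z'_{(p,\ell)}(t)$ for \emph{all} $(p,\ell)$ translates cleanly into $p(x,t)\ge p'(x,t)$, and then \eqref{eq:aaa} together with $h_\eta(0)\le h_{\eta'}(0)$ closes the argument: $H(x,t)-H'(x,t)=p'(x,t)-p(x,t)+h_\eta(0)-h_{\eta'}(0)\le 0$.
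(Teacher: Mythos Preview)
Your approach is essentially the same as the paper's: normalize so that the minimum of $h_{\eta'}-h_\eta$ is zero and is attained at the origin, deduce $z_{(p,\ell)}\le z'_{(p,\ell)}$ at time zero via \eqref{eq:aaa}, then show that any minimizing $\xi\in\Xi_{(p,\ell),\eta',W,t}$ yields an element of $\Xi_{(p,\ell),\eta,W,t}$ with the same $x_0$, and conclude again by \eqref{eq:aaa}. One small correction: condition (V) in Definition~\ref{def:Xi} \emph{does} reference the initial positions $z_{(p'',\ell'')}$ (through the inequality $x<z_{(p'',\ell'')}$) and can genuinely fail when you replace $\eta'$ by $\eta$, since $z_{(p'',\ell'')}\le z'_{(p'',\ell'')}$; so your fallback to Proposition~\ref{prop:aux1}---verify only (I)--(III) and extract a valid subset with the same $x_0$---is actually the correct step here (for (V), not for (II)), and the paper's ``the other properties are obvious'' glosses over the same point.
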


\begin{proof}
Let $x\in G^*$ be such that
\[ H'(x,0)-H(x,0)=\min_y(H'(y,0)-H(y,0)).\]
We can assume without loss of generality that this minimum is $0$
(just by adding a constant to $h_{\eta}$, which will not affect the
dynamics at all) and that $x=0$.

Using  \eqref{eq:aaa} (at time $t=0$), the assumption $h_{\eta}(\cdot)\le h_{\eta'}(\cdot)$ easily implies $p(x,0)\ge p'(x,0)$ and therefore
\[ z_{(p,\ell)}(0)\leq z'_{(p,\ell)}(0) \text{ for every } \ell,p. \]
Then, it is easy to deduce that
$ z_{(p,\ell)} (t) \leq z'_{(p,\ell)}(t)$ for all later times. Indeed,
if $\xi \in \Xi_{(p,\ell),\eta',W,t}$ is such that
$x_0(\xi)\leq z_{(p,\ell)}(0)$ then $\xi $ belongs also to
$ \Xi_{(p,\ell),\eta,W,t}$ (property (III) in Definition \ref{def:Xi}
is guaranteed by the fact that all particles in $\eta$ are to the left
of their $\eta'$ counterparts, and the other properties are
obvious). The definition of the dynamics, Eq. \eqref{eq:3}, implies
that $ z'_{(p,\ell)} (t) \geq
z_{(p,\ell)}(t)$. 
This inequality, combined with \eqref{eq:aaa} (this time at time $t$) gives us the desired domination. 
\end{proof}

\subsection{``Localizing'' the dynamics}
\label{sec:locdin}
It will be very useful, in the proof of Theorem \ref{th:hl}, to
consider a ``localized'' version of the dynamics where the Poisson
clocks are allowed to ring only in a certain finite subset of the
infinite lattice. Namely, fix $ \ell_-<\ell_+$ and
$ z_-< z_+$ consider a modified dynamics (that we distinguish
by a tilde) where the Poisson clocks rings $W_{(\ell,z)}$ for
$\ell\not\in ( \ell_-,\ell_+)$ or
$z\not\in [ z_-, z_+]$ are disregarded. 
In other words, the
modified dynamics is defined by the usual formula \eqref{eq:3} but the
Poisson realization $W=\{W_{(\ell,z)}\}_{\ell,z\in\mathbb Z}$ is
replaced by
$\tilde W=\{W_{(\ell,z)}\}_{ \ell_-< \ell<  \ell_+,  z_-\le
  z\le  z_+}$. Observe that the inequalities on $\ell$ are strict while those on $z$ are not.
\begin{Definition}
\label{def:D}
  Given $ \ell_-<\ell_+$ and
$ z_-< z_+$, let 
\begin{eqnarray}
  \label{eq:DD}
  D(\ell_-,\ell_+,z_-,z_+)=\{x\in G^*: \bar\ell(x)\in [ \ell_-,\ell_+],  \bar z(x)\in [ z_-, z_+]\}  
\end{eqnarray}
with $ \bar\ell(x), \bar z(x)$ defined in \eqref{eq:elle}, \eqref{eq:zeta}.
\end{Definition}
We will refer to the above defined modified dynamics as to the ``dynamics localized in $D(\ell_-,\ell_+,z_-,z_+)$''. We have chosen a rectangular shape for the localization region $D$ just for simplicity.

We start from the following observation:
\begin{Proposition}
\label{claim:soff}
Let $\eta_1,\eta_2$ be two configurations in $\Omega$ such that
\begin{eqnarray}
  \label{eq:uguali}
h_{\eta_1}(x)=h_{\eta_2}(x) \text{  for every } x\in  D(\ell_-,\ell_+,z_-,z_+).
\end{eqnarray}
Couple the dynamics localized in $D(\ell_-,\ell_+,z_-,z_+)$,
started from $\eta_1,\eta_2$, by using the same realization $\tilde W$
for the Poisson clocks and call $\tilde H_i(\cdot,\cdot),i=1,2$ the 
corresponding height functions.  The following fact holds:
\begin{eqnarray}
  \label{eq:Htildeuguali}
  \tilde H_1(x,t)=\tilde H_2(x,t) \text{ for every } t\ge0, x\in  D(\ell_-,\ell_+,z_-,z_+).
\end{eqnarray}
\end{Proposition}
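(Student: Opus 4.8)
The plan is to rephrase the statement in terms of particle currents and then to run an induction over the Poisson rings that fall inside the localization window.

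First I would record the localized analogue of \eqref{eq:H}: writing $D:=D(\ell_-,\ell_+,z_-,z_+)$ and letting $\tilde J_x^{(i)}(t)$ be the number of particles on line $\bar\ell(x)$ that cross the coordinate $\bar z(x)$ from right to left in $[0,t]$ under the dynamics localized in $D$ started from $\eta_i$, one has $\tilde H_i(x,t)=h_{\eta_i}(x)-\tilde J_x^{(i)}(t)$. Since $h_{\eta_1}\equiv h_{\eta_2}$ on $D$ by hypothesis, \eqref{eq:Htildeuguali} is equivalent to $\tilde J_x^{(1)}(t)=\tilde J_x^{(2)}(t)$ for all $x\in D$ and $t\ge0$. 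Two observations trivialize part of this: on the frozen boundary lines $\bar\ell(x)\in\{\ell_-,\ell_+\}$ no clock ever rings, so no particle on those lines moves and both currents vanish; and the clocks driving the localized dynamics live on the finite set of sites $(\ell,z)$ with $\ell_-<\ell<\ell_+$, $z_-\le z\le z_+$, so almost surely only finitely many rings occur in $[0,t]$ and the height functions change only at those times.

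The core is then an induction over these ring times $0<s_1<s_2<\dots$, the inductive hypothesis being that $\tilde H_1(\cdot,s)=\tilde H_2(\cdot,s)$ on $D$ for every $s$ before the next ring; equivalently, that the two configurations carry the same height function on $D$ immediately before it. The base case is the hypothesis of the Proposition and, since nothing changes between rings, the whole matter reduces to showing that a ring at a site $(\ell,z)$ with $\ell_-<\ell<\ell_+$, $z_-\le z\le z_+$ modifies $h|_D$ in the same way for $i=1,2$. The outcome of such a ring is governed by three data: whether $(\ell,z)$ is occupied; if not, the position $z_{old}$ of the leftmost particle of line $\ell$ with coordinate $>z$ (which, since that particle then crosses exactly the vertices of $G^*$ on line $\ell$ at coordinates in $(z,z_{old})$, enters only through $\min(z_{old},z_+)$); and whether the move is blocked by the two interlacing neighbours $I_{(P,\ell)}$ of that particle, say $(P,\ell)$. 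The point is that each of these is a function of $h|_D$ alone: the increments $h(\cdot+e_1+e_2)-h(\cdot)$ of $h$ along line $\ell$ inside $D$ give, by \eqref{eq:grad2}, the occupied sites of line $\ell$ in $[z_-,z_+]$ — hence the first occupied site to the right of $z$ within the window, or else the information that $z_{old}>z_+$ — so both $\min(z_{old},z_+)$ and the list of vertices of $G^*$ in $D$ swept by the moving particle are read off; and, by \eqref{eq:height}--\eqref{eq:height2}, suitable increments $\Delta_1 h$ and $\Delta_2 h$ taken at the vertices of $G^*$ on line $\ell$ adjacent to the clock site are exactly the indicators that the corresponding member of $I_{(P,\ell)}$ lies to the right of $z$, so the move takes place iff these two increments vanish. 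One must check that the vertices of $G^*$ entering these increments indeed belong to $D$; this is where the half-integer offset between the lattices $G$ and $G^*$, together with the choice of $D$ closed in the $z$-direction and open in the $\ell$-direction, is used. Granting this, the common pre-ring datum $h|_D$ and the common ring site produce the same post-ring $h|_D$, which closes the inductive step; letting the induction run through all rings in $[0,t]$ yields \eqref{eq:Htildeuguali}.

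The main obstacle I anticipate is precisely that last verification — that occupancy, the blocking condition, and the swept set are all determined by $h|_D$ — once one insists on being careful about clock sites and particles sitting on the very edge of the window $[z_-,z_+]$, where one has to track the shifts between the two sublattices to be sure that the relevant increments of $h$ have both endpoints in $D$. The underlying conceptual fact, namely that the transition rule of the localized chain, read off the height function, is a function of $h|_D$ — so that two initial configurations agreeing on $D$ stay coupled on $D$ for all time — is what makes the Proposition hold.
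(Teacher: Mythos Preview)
Your inductive approach over Poisson ring times is sound and gives a genuinely different proof from the paper's. The paper never argues ring by ring; instead it works directly with the variational description of the dynamics and shows that the sets $\Xi_{(p,\ell),\eta_1,\tilde W,t}$ and $\Xi_{(p,\ell),\eta_2,\tilde W,t}$ coincide for every relevant $(p,\ell)$. In that argument the only place the initial configuration enters Definition~\ref{def:Xi} is through condition~(III), and the paper checks that the constraint $x<z_{(p'',\ell'')}$ reads the same for $\eta_1$ and $\eta_2$ by splitting into the cases $z_{(p'',\ell'')}\in(z_-,z_+)$ (where the two configurations agree) and $z_{(p'',\ell'')}\ge z_+$ (where the half-integer offset forces $x<z_{(p'',\ell'')}$ in both). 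This bypasses any ring-by-ring bookkeeping. Your approach, by contrast, is closer to the intuitive Markov-chain picture and is arguably more elementary; it also makes transparent that the statement is really a statement about the transition kernel of the localized chain being measurable with respect to $h|_D$.

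One caution about the edge verification you flag: when the clock rings at $z=z_-$, the increment $\Delta_2 h$ you propose to read off the blocking by $(P-1,\ell+1)$ is based at the $G^*$ vertex on line $\ell$ at horizontal coordinate $z_--\tfrac12$, which lies \emph{outside} $D$. So that particular increment is not available. The fix is to use instead the pair consisting of the $G^*$ vertex on line $\ell+1$ at $\bar z=z_-$ (which \emph{is} in $D$, by parity and by $\ell+1\le\ell_+$) and the $G^*$ vertex on line $\ell$ at $\bar z=z_-+\tfrac12$; the increment $\Delta_1 h$ between these two, read via \eqref{eq:height2}, discriminates exactly whether the blocking particle on line $\ell+1$ sits to the right of $z_-$, and similarly for $\ell-1$. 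With this adjustment your scheme goes through; the paper's $\Xi$-set argument simply avoids having to locate these increments by hand.
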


\begin{proof}[Proof of Proposition \ref{claim:soff}] 

  Note first of all that condition \eqref{eq:uguali} is equivalent to the following statement (see Figure \ref{fig:6}):
  there is a particle on line $\ell_-\le \ell \le \ell_+$ with
  horizontal coordinate $ z_-<z< z_+$ for configuration
  $\eta_1$ iff there is a particle at the same location for
  configuration $\eta_2$. Therefore, possibly modulo changing
   the origin of the particle labels in one of the two
  configurations, we have that particle labelled
  $(p,\ell), \ell_-\le \ell\le \ell_+$ is at position
  $ z_-<z< z_+$ in $\eta_1$ iff the same happens for
  $\eta_2$. We denote $z^{(i)}_{(p,\ell)}(t)$ particle positions in
  the process $i=1,2$ (we omit the tildes to keep notations lighter).
\begin{figure}
\includegraphics[width=8cm]{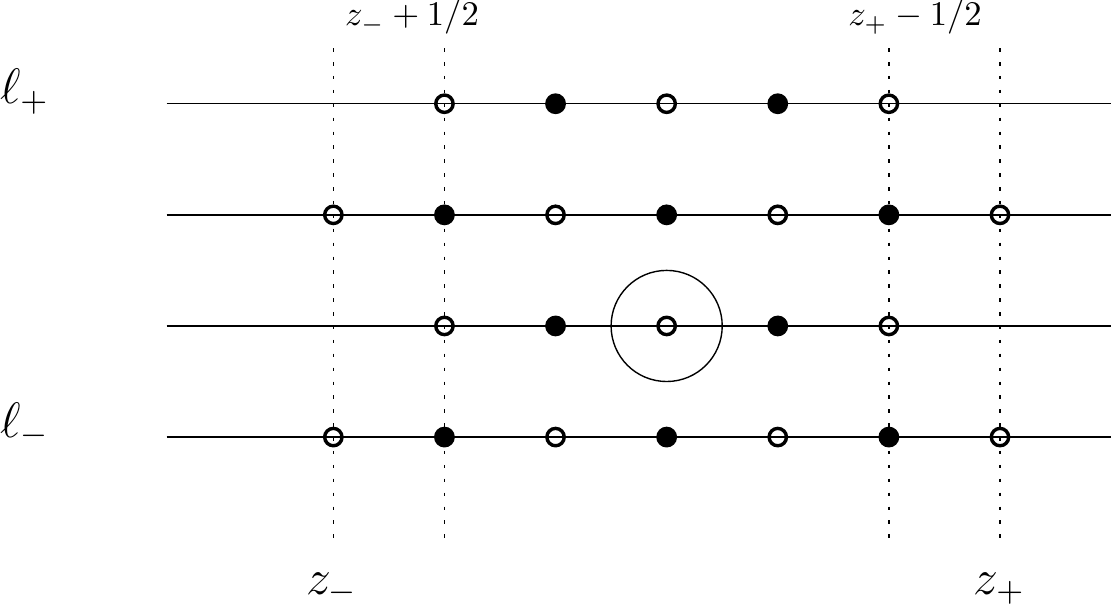}
  \caption{The white dots in the picture  (vertices of $G^*$, where the height is defined) comprise the rectangular set $D(\ell_-,\ell_+,z_-,z_+)$
of \eqref{eq:DD}. Black dots are vertices of 
 $G$ (possible particle positions) with horizontal coordinate strictly between $z_-$ and $z_+$ and vertical coordinate in $[\ell_-,\ell_+]$.
From \eqref{eq:grad2} we see that the height function on white white dots  determines particle occupation variables
on black dots. Vice versa, knowing the occupation variable of black dots and using \eqref{eq:height}, \eqref{eq:height2} and \eqref{eq:grad2} one can determine the height at white dots, once the height is fixed somewhere (say on the encircled vertex).}
\label{fig:6}
\end{figure}

  Observe that if a particle $(p,\ell)$ satisfies initially
  $z^{(i)}_{(p,\ell)}> z_+$, it is still possible that
  $ z_-\le z^{(i)}_{(p,\ell)}(t)\le z_+$ for some $t>0$; on the other
  hand, if $ z_-\le z^{(i)}_{(p,\ell)}(t)\le z_+$ for some $t\ge0$
  then the same property holds at later times, because there are no
  Poisson points of $\tilde W$ to the left of $ z_-$.

In view of this discussion, we see that the height $\tilde H_i(\cdot,t)$ in $D(\ell_-,\ell_+,z_-,z_+)$ at time $t$
is uniquely determined by the positions $ z^{(i)}_{(p,\ell)}(t)$  of particles 
$(p,\ell)$ with $\ell_-\le \ell\le \ell_+$ and  $z_-< z^{(i)}_{(p,\ell)}(t)<z_+$. 
  By definition of the localized dynamics, only particles with line
  index $ \ell_-<\ell< \ell_+$ can move, so we have to check
  \eqref{eq:Htildeuguali} only for $x$ such that
  $\bar \ell(x)\in (\ell_-,\ell_+)$.
The claim of the Proposition 
then follows if we can prove that, for every $(p,\ell)$ with $\ell_-<\ell<\ell_+$, we have
\begin{eqnarray}
  \label{eq:XiXi}
\Xi_{(p,\ell),\eta_1,\tilde W,t}=\Xi_{(p,\ell),\eta_2,\tilde W,t}.  
\end{eqnarray}
Let $\xi\in \Xi_{(p,\ell),\eta_1,\tilde W,t}$. In the Definition
\ref{def:Xi} of $\Xi_{(p,\ell),\eta,\tilde W,t}$, the initial
condition $\eta$ enters only through property (III).  Therefore, to prove
that $\xi\in \Xi_{(p,\ell),\eta_2,\tilde W,t}$ it suffices to show
that, if $((x,s),(p',\ell'))\in\xi$ and $x<z_{(p'',\ell'')}^{(2)}$ for
some $(p'',\ell'')\in I_{(p',\ell')}$ then
\begin{eqnarray}
  \label{eq:cs}
\text{there exists }
((x',s'),(p'',\ell''))\in \xi \text{  with } x'<x, s'<s.
\end{eqnarray}

Recall that, by the definition of $\tilde W$, one has $\ell_-<\ell'<\ell_+$ and
$ z_-\le x\le z_+$, so that $\ell_-\le \ell''\le \ell_+$ and
$z_-<z^{(2)}_{(p'',\ell'')}$.
If also $z^{(2)}_{(p'',\ell'')}<z_+$ then, as discussed above, we have
$z^{(1)}_{(p'',\ell'')}=z^{(2)}_{(p'',\ell'')}$ and \eqref{eq:cs}
follows because $\xi\in \Xi_{(p,\ell),\eta_1,\tilde W,t}$.  On the
other hand, if $z^{(2)}_{(p'',\ell'')}\ge z_+$ then the same holds for
$z^{(1)}_{(p'',\ell'')}$ (even if it is possible that
$z^{(1)}_{(p'',\ell'')}\ne z^{(2)}_{(p'',\ell'')}$). Given that
$x\le z_+$, we see that $x<z_{(p'',\ell'')}^{(i)}$ holds for both
$i=1,2$ (remark that equality cannot hold since $x$ and
$z_{(p'',\ell'')}^{(i)}$ differ at least by $1/2$, the corresponding
particles being on two neighboring lines) and again \eqref{eq:cs}
follows.

We have proven that $\Xi_{(p,\ell),\eta_1,\tilde W,t}\subset\Xi_{(p,\ell),\eta_2,\tilde W,t}$ and an analogous argument gives the opposite inclusion.
\end{proof}
 We have then a ``local'' version of Theorem
\ref{th:stocdom}, and it is actually this version we will mostly use:
 \begin{Theorem}[Stochastic domination: local version]
   \label{th:stocdomloc} Consider the dynamics localized in
   $D (\ell_-,\ell_+,z_-,z_+) $ defined as above.  Given two initial
   conditions $\eta,\eta'$ in $\Omega$ such that
   $ h_\eta(x)\leq h_{\eta'}(x)$ for every
   $x\in  D(\ell_-,\ell_+,z_-,z_+)$, denote
   $\tilde H(\cdot,t),\tilde H'(\cdot,t)$ the respective height
   functions at time $t$ for the coupled evolutions that use the same
   Poisson process realization $ \tilde W$. Then,
   \[H(x,t)\leq H'(x,t) \; \text{ for every } x\in
   D(\ell_-,\ell_+,z_-,z_+), t\ge 0.\]

 \end{Theorem}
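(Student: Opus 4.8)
The strategy is to deduce Theorem~\ref{th:stocdomloc} from the global stochastic domination, Theorem~\ref{th:stocdom}, after replacing $\eta$ by an auxiliary configuration that coincides with $\eta$ on $D:=D(\ell_-,\ell_+,z_-,z_+)$ but is \emph{globally} dominated by $\eta'$. Recall that the localized dynamics is, by definition, nothing but the general dynamics \eqref{eq:3} run with the (non-generic) Poisson realization $\tilde W$ obtained from $W$ by emptying all clocks outside $(\ell_-,\ell_+)\times[z_-,z_+]$; hence Theorem~\ref{th:stocdom} applies with $\tilde W$ in place of $W$ (its proof uses only that the dynamics is well defined for the given realization, which is immediate for $\tilde W$ since it has only finitely many active clocks in any bounded time interval).

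Concretely, I would introduce $\eta_1\in\Omega$ by declaring $h_{\eta_1}:=\min(h_\eta,h_{\eta'})$, both height functions being normalized to vanish at the origin. One first checks that $\min(h_\eta,h_{\eta'})$ really is the height function of a configuration in $\Omega$. For the gradient structure this is routine: a function $g$ on $G^*$ is the height function of some lozenge tiling as soon as $\Delta_1 g,\Delta_2 g\in\{0,1\}$ everywhere and $\Delta_1 g(x)+\Delta_2 g(x+e_1)\in\{0,1\}$ (cf. \eqref{eq:grad2}), and this set of constraints is preserved by the pointwise minimum of two height functions, precisely because the gradients of $h_\eta$ and $h_{\eta'}$ are non-negative (so $f(x+e_i)\ge f(x)$ and $f(x+e_1+e_2)\ge f(x)$ for $f=h_\eta,h_{\eta'}$). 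For condition \eqref{eq:1} I would use that every maximal interval of a line $\ell$ containing no particle of $\eta_1$ is contained in such an interval for $\eta$ or for $\eta'$: if $\min(h_\eta,h_{\eta'})$ grows at the maximal rate (by $1$ per unit step along the line) on a segment, then the function realizing the minimum at the left endpoint of that segment must do so as well, since it is $\ge\min(h_\eta,h_{\eta'})$ and agrees with it there. Consequently the particle spacings of $\eta_1$ are bounded by those of $\eta$ and $\eta'$, so \eqref{eq:1} is inherited and $\eta_1\in\Omega$. By construction $h_{\eta_1}=h_\eta$ on $D$ (because $h_\eta\le h_{\eta'}$ there) while $h_{\eta_1}\le h_{\eta'}$ on all of $G^*$.

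I would then conclude in two steps. First, since $h_{\eta_1}\le h_{\eta'}$ on $G^*$, Theorem~\ref{th:stocdom} applied with Poisson realization $\tilde W$ gives $\tilde H_{\eta_1}(x,t)\le\tilde H_{\eta'}(x,t)$ for every $x\in G^*$ and $t\ge0$, where $\tilde H_{\eta_1},\tilde H_{\eta'}$ denote the localized height functions started from $\eta_1,\eta'$. Second, since $\eta_1$ and $\eta$ agree on $D$, Proposition~\ref{claim:soff} gives $\tilde H_{\eta_1}(x,t)=\tilde H_{\eta}(x,t)$ for every $x\in D$ and $t\ge0$. Combining the two on $D$ yields $\tilde H(x,t)=\tilde H_{\eta_1}(x,t)\le\tilde H_{\eta'}(x,t)=\tilde H'(x,t)$, which is the assertion. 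The only genuine point of care is the verification that $\min(h_\eta,h_{\eta'})$ respects condition \eqref{eq:1}; the rest is bookkeeping — in particular, checking that the additive normalization of the height functions (fixed at the origin as in Definition~\ref{def:height}) is used consistently, which is harmless since $h_{\eta_1},h_\eta,h_{\eta'}$ all vanish at the origin and $h_{\eta_1}$ coincides with $h_\eta$ throughout $D$.
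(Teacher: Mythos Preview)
Your proof is correct and follows essentially the same route as the paper's. The paper simply asserts that, by Proposition~\ref{claim:soff}, one may without loss of generality assume $h_\eta\le h_{\eta'}$ on all of $G^*$, and then reuses the argument of Theorem~\ref{th:stocdom} verbatim; your construction $h_{\eta_1}=\min(h_\eta,h_{\eta'})$ is precisely the explicit realization of that ``without loss of generality'', combined with the same two ingredients (Proposition~\ref{claim:soff} and Theorem~\ref{th:stocdom}).

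One small remark on the verification of \eqref{eq:1} for $\eta_1$: your gap argument is correct but the link to \eqref{eq:1} is a bit indirect. A cleaner route is to observe directly from \eqref{eq:grad2} that the number of particles of $\eta_1$ on line $\ell$ between two points $x$ and $x+(n,n)$ equals $n-(h_{\eta_1}(x+(n,n))-h_{\eta_1}(x))$, which is exactly $\min$ of the corresponding counts for $\eta$ and $\eta'$; since each of those grows faster than $\sqrt{n}$ (this being equivalent to \eqref{eq:1}), so does their minimum.
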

 The proof is identical to that of Theorem \ref{th:stocdom}: indeed,
 by Proposition \ref{claim:soff}, we can without loss of generality imagine
 that the initial height functions verify $h_\eta(x)\leq h_{\eta'}(x)$
 for \emph{all} $x\in G^*$. At that point, the argument proceeds without any change.

\subsection{Propagation of information}

We have seen that whenever the initial condition $\eta$ is in
$\Omega$, the dynamics is well defined. Under a more restrictive
condition on $\eta$, roughly speaking if $z_{(p,\ell)}-z_{(p-k,\ell)}$
grows at most linearly with $k$, one has a stronger property:
information travels only ballistically through the system.  The way we
will use such property is to deduce that, with high probability, the
full dynamics and the dynamics localized in a large domain  have
exactly the same evolution, far from the boundary of the domain.
See Proposition
\ref{prop:propagation} below.

Let us define more precisely the condition on $\eta$.
Given $M\ge1$, let 
\begin{eqnarray}
\label{eq:Omegaeps}
\Omega_{M}=\{\eta\in\Omega:  z_{(p,\ell)}-z_{(p-k,\ell)}\le Mk \text{ for every } \ell,p,k\}.
\end{eqnarray}
\begin{Remark}
\label{rem:exM}
Under Assumption \ref{assumption:1}, there exists a finite $M$ such
that the initial condition $\eta^{(L)}$ belongs to $\Omega_M$ for
every $L$. Actually $M$ depends only on the set $A$ of Assumption
\ref{assumption:1}.
\end{Remark}

Informally, the ``ballistic propagation of information'' statement
says that, for initial conditions $\eta\in \Omega_M$, with high
probability the evolution of the height $H(x,t)$ at a fixed point $x$ and for times up to $T$
is not influenced by the realization of the Poisson processes at
points $y$ such that $|y-x|\ge T/\Delta$, for some positive constant
$\Delta=\Delta(M)$. A similar statement holds for typical initial configurations
sampled from a Gibbs measure $\pi_\rho$. Let us formalize these facts.

Given a realization $W$ of the Poisson process $\mathcal W$ and a
subset $\tilde W\subset W$, we will say that $W$ and $\tilde W$
coincide on
$[\bar z-n,\bar z+n]\times [\bar \ell-2n,\bar \ell+2n]\times [0,t]$ to
mean that $\tilde W$ contains all the points of
$W_{(\ell,z)}, z\in [\bar z-n,\bar z+n], \ell \in [\bar
\ell-2n,\bar\ell+2n]$,
up to time $t$. Moreover, $\tilde \eta(t)$ will denote the
configuration defined by particle positions \eqref{eq:3} with $W$
replaced by $\tilde W$, and $\tilde H(\cdot,\cdot)$ the corresponding
height function. The fact that $\tilde \eta(t)$ is a well-defined
configuration in $\Omega$ can be easily checked by noting that the
proof of Claims (1) and (2) of Proposition \ref{prop:bendefinito} required only
\emph{upper bounds} on the number of Poisson points of $W$ in certain
subsets.  It is also obvious from Definition \ref{def:Xi} that, if
$\tilde W\subset W$,
\[\Xi_{(p,\ell),\eta,\tilde W,t}\subseteq \Xi_{(p,\ell),\eta,W,t}\]
so that $\tilde z_{(p,\ell)}(t)\ge z_{(p,\ell)}(t)$ (particles move less quickly if there are fewer Poisson clock rings) and, as a consequence, 
\begin{eqnarray}
  \label{eq:primob}
H(x,t)\le \tilde H(x,t).  
\end{eqnarray}

\begin{Proposition}
  \label{prop:propagation} Let $M\ge1$, $\eta \in\Omega_M$ and
  $x\in G^*$.  There exists $c=c(M)>0$ and $\Delta=\Delta(M)>0$
  such that the following holds for every $n\ge1$, with probability at
  least $1-c e^{-n/c}$ w.r.t. the law $\mathbb P$ of the Poisson
  process $\mathcal W$.

  For every $\tilde W\subset W$ that coincides with $W$ on  \[R_n:=[\bar z-n,\bar z+n]\times (\bar \ell-2n,\bar \ell+2n)\times [0,\Delta n]\] (with $\bar z=\bar z(x),\bar \ell=\bar \ell(x)$ as in \eqref{eq:elle}, \eqref{eq:zeta}), one has
\begin{eqnarray}
H(x,t)=\tilde H(x,t), \; \forall t\leq \Delta n.
\end{eqnarray}

\end{Proposition}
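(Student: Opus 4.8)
The plan is to show that, with high probability, \emph{every} $\xi\in\Xi_{(p,\ell),\eta,W,t}$ that realizes the infimum \eqref{eq:3} for the relevant particle label (the one controlling $H(x,t)$, i.e.\ $p=p(x,t)$ on line $\bar\ell(x)$) stays inside the space-time box $R_n$, for all $t\le\Delta n$ and a suitable $\Delta=\Delta(M)$. Once this is established, removing from $W$ the Poisson points outside $R_n$ does not change the infimum \eqref{eq:3}, hence $H(x,t)=\tilde H(x,t)$ for all $t\le\Delta n$; here one uses both inclusions $\Xi_{(p,\ell),\eta,\tilde W,t}\subseteq\Xi_{(p,\ell),\eta,W,t}$ (which gives $H\le\tilde H$, cf.\ \eqref{eq:primob}) and the fact that a minimizing $\xi\subset R_n$ survives intact in $\Xi_{(p,\ell),\eta,\tilde W,t}$ (which gives $\tilde H\le H$).

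The geometric confinement has two directions to control. In the time/space-extent direction: a minimizing $\xi$ with $x_0(\xi)\le z_{(p,\ell)}$ whose leftmost point lies to the right of $z_{(p,\ell)}-r$ has diameter at most $2r$ by Lemma~\ref{lemma:astuto}, and a decreasing path of length $\le 2r$ starting at $(p,\ell)$ can reach lines only within distance $2r$ of $\bar\ell$, and (since consecutive $x_i$ differ by $\ge 1/2$ and consecutive space labels are within the horizontal span) spatial labels within $O(r)$ of $x_0(\xi)$. So the real task is to show that, with probability $\ge 1-ce^{-n/c}$, no minimizing $\xi$ has its leftmost point further left than, say, $z_{(p(x,t),\bar\ell)}-n/2$, uniformly over $t\le\Delta n$. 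Here the hypothesis $\eta\in\Omega_M$ enters crucially: a minimizing $\xi$ whose leftmost point sits in $[z_{(q-1,\bar\ell)},z_{(q,\bar\ell)})$ with $q\le p-k$ has diameter $\ge k+1$ (Lemma~\ref{lemma:astuto} again), while by $\eta\in\Omega_M$ the spatial window it must fit in has width at most $z_{(p,\bar\ell)}-z_{(p-k,\bar\ell)}\le Mk$ — so Proposition~\ref{prop:diametro} bounds the probability of such a $\xi$ existing (inside $[z_{(p-k,\bar\ell)},z_{(p,\bar\ell)}]\times[0,t]$ with diameter $\ge k$) by $(4tMk)^k/(k!)^2\le (4\Delta n\,Mk)^k/(k!)^2$. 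Summing over $k\ge n$ and choosing $\Delta$ small enough that $4\Delta Mk< e^{-2}k$, i.e.\ $\Delta< e^{-2}/(4M)$, makes each term $\le e^{-k}$ by Stirling, giving a total bounded by $Ce^{-n}$. A slight subtlety is that $p$ itself is $p(x,t)$, which varies with $t$; but $p(x,t)$ is monotone in $t$ and only decreases, and the number of particles that can cross $x$ up to time $\Delta n$ is itself controlled by the same diameter estimate, so one absorbs this into the constants (or simply runs the bound with $p$ the initial index $p(x,0)$ and notes that the minimizing set for $z_{(p(x,t),\bar\ell)}(t)$ with $t\le\Delta n$ is extracted, via Proposition~\ref{prop:estratto}, from one for a nearby label).

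Assembling: fix $\Delta=\Delta(M)<e^{-2}/(4M)$; on the complement of the bad event above (probability $\ge 1-Ce^{-n}$), every minimizing $\xi$ for every $z_{(p',\bar\ell)}(t)$ with $p'\ge p(x,\Delta n)$ and $t\le\Delta n$ has all its points with spatial label in $[\bar z-n,\bar z+n]$ and line label in $(\bar\ell-2n,\bar\ell+2n)$ and time label in $[0,\Delta n]$, i.e.\ $\xi\subset R_n$. Then for any $\tilde W\subset W$ agreeing with $W$ on $R_n$, such a minimizing $\xi$ lies in $\Xi_{(p',\bar\ell),\eta,\tilde W,t}$, so $\tilde z_{(p',\bar\ell)}(t)\le x_0(\xi)=z_{(p',\bar\ell)}(t)$, while the reverse inequality is automatic from $\tilde W\subset W$; hence $\tilde z=z$ for these labels and, via the height-difference formula \eqref{eq:aaa} (or directly via \eqref{eq:grad2} relating occupation variables near $x$ to $H(x,t)$), $\tilde H(x,t)=H(x,t)$ for all $t\le\Delta n$. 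The main obstacle is the uniformity in $t$: one needs the diameter/confinement bound to hold simultaneously for all $t\le\Delta n$ and for the (a priori $t$-dependent) relevant particle label, and the cleanest route is to note that Proposition~\ref{prop:diametro}'s event is monotone — if it fails to occur with the largest time horizon $\Delta n$, it fails for all smaller $t$ — and that the relevant labels for $t\le\Delta n$ all lie within $O(n)$ of $p(x,0)$ on the good event, so a single union bound over $k\ge n$ suffices. Everything else is bookkeeping with Stirling and the definition of $\Omega_M$.
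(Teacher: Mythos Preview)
Your approach is the same as the paper's---confine every minimizing $\xi$ to $R_n$ via Lemma~\ref{lemma:astuto} and Proposition~\ref{prop:diametro}, then conclude---but there is a quantitative slip that breaks the argument as written. When you ``sum over $k\ge n$'' (with $k$ the particle-index offset), you are implicitly assuming that ruling out leftmost points to the left of $z_{(p-n,\bar\ell)}$ already confines $\xi$ spatially to $[\bar z-n,\bar z+n]$. But $\Omega_M$ only gives $z_{(p,\bar\ell)}-z_{(p-k_0,\bar\ell)}\le Mk_0$, so to force $z_{(p-k_0,\bar\ell)}\ge\bar z-n$ you need $Mk_0\le n$, i.e.\ the threshold must be $k_0\sim n/M$, not $n$. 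With that threshold the leading term of your sum is roughly $(4\Delta n\cdot M\cdot n/M)^{n/M}/((n/M)!)^2\approx(4\Delta e^2M^2)^{n/M}$, which forces $\Delta\lesssim 1/M^2$; this is exactly the paper's choice $\Delta=e^{-3}/(64M^2)$, with $k_0=\lfloor n\sqrt\Delta/e^{-3/2}\rfloor\sim n/(8M)$. (Separately, your condition ``$4\Delta Mk<e^{-2}k$'' silently drops the factor $n$ present in $(4\Delta nMk)^k/(k!)^2$.) The paper also handles the $t$-dependence of the relevant particle label more cleanly than your sketch: rather than tracking $p(x,t)$ (which, incidentally, is non-\emph{decreasing} in $t$, not decreasing), it proves separately that (i) every particle initially in $[\bar z,\bar z+n]$ on line $\bar\ell$ evolves identically under $W$ and $\tilde W$, and (ii) no particle initially to the right of $\bar z+n$ reaches $\bar z$ by time $\Delta n$; a union bound over the at most $n$ particles in (i) then gives the final estimate.
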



\begin{Proposition}
\label{prop:propaeq}
  Let $\eta$ be sampled from $\pi_\rho$ and fix $x\in G^*$. There exists $c=c(\rho)>0$ and $\Delta=\Delta(\rho)>0$ such that 
the following holds for $n\ge1$, with probability at least $1-c e^{-n/c}$ w.r.t. the joint law $\pi_\rho\times \mathbb P$ of $\eta$ and $\mathcal W$:
for every $\tilde W\subset W$ that coincides with $W$ on  $R_n$, one has
\begin{eqnarray}
H(x,t)=\tilde H(x,t), \; \forall t\leq \Delta n.
\end{eqnarray}
\end{Proposition}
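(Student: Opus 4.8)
The plan is to deduce this statement from Proposition \ref{prop:propagation} together with the fluctuation estimate on particle spacings in the stationary measure (Corollary \ref{cor:A1}, or directly Lemma \ref{lemma:A1}). The key point is that $\pi_\rho$-typical configurations behave, on the scale relevant to the propagation-of-information bound, like configurations in $\Omega_M$ for a suitable $M=M(\rho)$. More precisely, first I would fix $M$ strictly larger than $1/\rho_3$ and introduce the event, depending only on $\eta$,
\begin{equation}
\mathcal G_n(\eta)=\left\{z_{(p,\bar\ell(x)+j)}-z_{(q,\bar\ell(x)+j)}\le M(p-q)\ \text{for all } |j|\le 2\Delta n,\ \text{all } p>q\ \text{with } z_{(q,\bar\ell(x)+j)}\in[\bar z(x)-C\Delta n,\bar z(x)+C\Delta n]\right\},
\end{equation}
i.e. the event that particle spacings are at most $M$-ballistic on all lines within vertical distance $2\Delta n$ of $x$ and horizontal range of order $\Delta n$ around $x$ (here $C$ and $\Delta$ are the constants, depending only on $M$, produced by the speed-of-propagation argument — the range that matters is finite because particles only move left and the relevant $\xi$'s live in a bounded region, cf. the proof of Claim (1) of Proposition \ref{prop:bendefinito}). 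By Corollary \ref{cor:A1} and a union bound over the $O(\Delta n)$ lines and $O(\Delta n)$ base points on each line, $\pi_\rho(\mathcal G_n^c)\le \tilde c\, (\Delta n)^2 e^{-c'\Delta n}\le \tfrac12 c\, e^{-n/c}$ for suitable constants, after enlarging $c$.

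Next I would observe that on the event $\mathcal G_n(\eta)$ the argument proving Proposition \ref{prop:propagation} goes through verbatim: that proof only uses the hypothesis $\eta\in\Omega_M$ through the requirement that particle spacings on the lines $\bar\ell(x)\pm 2n$ and in the horizontal window of size $O(\Delta n)$ around $x$ grow at most linearly with rate $M$ — the global bound in the definition \eqref{eq:Omegaeps} of $\Omega_M$ is never needed, only its restriction to the finite region where the sets $\xi\in\Xi_{(p,\ell),\eta,W,t}$ contributing to the dynamics at $x$ up to time $\Delta n$ are supported. (This is the same localization principle already used repeatedly: since particles only move to the left and $\mathrm{diam}(\xi)$ is controlled, the event $H(x,t)=\tilde H(x,t)$ for $t\le\Delta n$ depends on $\eta$ only through a bounded number of lines and a bounded horizontal window.) Therefore, conditionally on $\eta\in\mathcal G_n(\eta)$, with $\mathbb P$-probability at least $1-\tfrac12 c\,e^{-n/c}$ one has $H(x,t)=\tilde H(x,t)$ for all $t\le\Delta n$ and all $\tilde W\subset W$ coinciding with $W$ on $R_n$.

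Finally I would combine the two estimates: decomposing over whether $\mathcal G_n(\eta)$ holds,
\begin{equation}
(\pi_\rho\times\mathbb P)\bigl(\exists\, \tilde W\subset W\ \text{coinciding with } W \text{ on } R_n:\ \exists\, t\le\Delta n,\ H(x,t)\ne\tilde H(x,t)\bigr)\le \pi_\rho(\mathcal G_n^c)+\sup_{\eta\in\mathcal G_n}\mathbb P_\eta(\cdots)\le c\, e^{-n/c},
\end{equation}
which is the claim, after relabelling constants. The only genuinely non-routine point is the second step: one must make sure that the proof of Proposition \ref{prop:propagation} truly uses nothing about $\eta$ beyond the local spacing control encoded in $\mathcal G_n$, rather than the a priori global membership $\eta\in\Omega_M$. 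I expect this to be the main obstacle, though it should be unproblematic given that the speed-of-propagation estimate is itself local in nature; alternatively one can circumvent it by sandwiching the $\pi_\rho$-sampled $\eta$, on the event $\mathcal G_n$, between two configurations of $\Omega_M$ that agree with $\eta$ inside the relevant window, and invoking the local stochastic domination of Theorem \ref{th:stocdomloc} together with Proposition \ref{claim:soff} to transfer the conclusion of Proposition \ref{prop:propagation} to $\eta$ itself.
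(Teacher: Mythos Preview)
Your overall strategy is right, and in fact is exactly the paper's: show that $\pi_\rho$-typical configurations satisfy enough of the $\Omega_M$ spacing control for the proof of Proposition~\ref{prop:propagation} to go through unchanged. The gap is in your first step, the probability estimate for $\mathcal G_n$. Your event $\mathcal G_n$ demands $z_{(p,\ell)}-z_{(q,\ell)}\le M(p-q)$ for \emph{all} $p>q$ in a window of size $O(n)$, in particular for $p-q=1$. But under $\pi_\rho$ individual inter-particle spacings are unbounded; for any fixed $M$, a given spacing exceeds $M$ with some positive probability $p(M)>0$, and over $O(n)$ (or $O(n^2)$) spacings the event $\mathcal G_n$ fails with probability tending to $1$, not to $0$. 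Your claimed bound $\pi_\rho(\mathcal G_n^c)\le\tilde c(\Delta n)^2 e^{-c'\Delta n}$ cannot come from Corollary~\ref{cor:A1}: that corollary gives $\pi_\rho(z_{(p,0)}-z_{(p-k,0)}>Mk)\le C(p\vee k)e^{-k}$, which for $k=1$ is of order $1$, not $e^{-c'n}$.

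The fix --- and this is precisely what the paper does --- is to notice that the proof of Proposition~\ref{prop:propagation} never uses the $\Omega_M$ bound \eqref{eq:casoM} for small $k$: it enters only through \eqref{eq:cl2}, i.e.\ only for $k\ge k_0$ with $k_0$ proportional to $n$ as in \eqref{eq:k_0}. So the correct ``good'' event is the much weaker
\[
E_{n,k_0}=\{z_{(p,\bar\ell)}-z_{(p-k,\bar\ell)}\le Mk\ \text{for all } 1\le p\le n+1,\ k\ge k_0\},
\]
involving only the single line $\bar\ell=\bar\ell(x)$ and only macroscopic spacings $k\ge k_0\sim cn$. For this event Corollary~\ref{cor:A1} does give $\pi_\rho(E_{n,k_0}^c)\le c\,n(n\vee k_0)e^{-k_0}$, which is indeed $O(e^{-n/c})$. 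On $E_{n,k_0}$ the argument of Proposition~\ref{prop:propagation} runs verbatim, and the conclusion follows. Your sandwiching alternative does not circumvent the issue, since it still presupposes that $\mathcal G_n$ holds.
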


 \begin{proof}
[Proof of Proposition   \ref{prop:propagation} ]
   At time zero, $H(x,0)=\tilde H(x,0)=h_\eta(x)$, so we need to show
   that the height variation is the same both when the Poisson point
   realization is $W$ or $\tilde W\subset W$ (actually, in view of
   \eqref{eq:primob}, only one bound is needed). The height at $x$
   changes if and only if a particle, located at time zero on line $\bar\ell:=\bar \ell(x)$ to the right of
   $\bar z:=\bar z(x)$, jumps to the left of $\bar z$. The claim of the
   Proposition follows if we prove that for a set of $W$ of
   probability at least $1- c e^{-n/c}$ the following happens:
\begin{enumerate}
\item [(i)] every particle that is on line $\bar \ell$, with initial position in $[\bar z, \bar z+n]$, has the same evolution in the time interval $[0,\Delta n]$, for the dynamics determined by $W$ and by any $\tilde W\subset W$ as above.
\item [(ii)] none of the particles that are on line $\bar \ell$, with initial position to the right of $\bar z+n$, 
jumps to the left of $\bar z$ up to time $\Delta n$.
\end{enumerate}
 To prove (i), 
 let $(p,\bar \ell)$ be such that $z_{(p,\bar \ell)} \in [\bar z,\bar z+n]$.
Since
\[ z_{(p,\bar\ell)}(t)= x_0(\xi_0)\le z_{(p,\bar\ell)} \text{ for a certain
    $\xi_0 \in \Xi_{(p,\bar \ell),\eta,W,t}$}, \] it suffices to show that, for every $t\le \Delta n$ and for every $\tilde W\subset W$ as
above, if  $\xi\in\Xi_{(p,\bar \ell),\eta,W,t}$
and $x_0(\xi)\le z_{(p,\bar\ell)}$ then $\xi\in \Xi_{(p,\bar\ell),\eta,\tilde W,t}$.
Given that $\tilde W$ and $W$ coincide on $R_n$, it suffices to
prove that every point in 
$\xi$ has space-time
coordinates in $R_n$.  Given that $\Xi_{(p,\bar \ell),\eta,W,t}$ is
increasing in $t$, it suffices to prove the claim for $t=\Delta n$.
 
Let us make the following choice: 
\begin{eqnarray}
  \label{eq:DE}
\Delta=\frac{e^{-3}}{64 M^2},\quad
 \epsilon = \frac{e^{-3}}{4\Delta n}  
\end{eqnarray}
and recall from \eqref{eq:BC} that
\begin{align}
  \label{eq:ennsm}
  \mathbb P(\exists \xi \in \Xi_{(p,\bar\ell),\eta,W,\Delta n}|\xi \subset [z_{(p-k,\bar\ell)},z_{(p-k,\bar\ell)}+\epsilon k^2)
\times[0,\Delta n], diam(\xi)\ge k)\leq Ce^{-k}
\end{align}
for some absolute constant $C$.
Therefore,  the probability that none of those events happens after rank $k_0$ is bigger than $1-2C e^{-k_0}$.
Let us choose 
\begin{eqnarray}
  \label{eq:k_0}
   k_0:= \left\lfloor \frac{n \sqrt{\Delta}}{e^{-3/2}}\right\rfloor,  
\end{eqnarray}
so that 
\begin{eqnarray}
  \label{eq:cl1}
 M k_0 \leq \epsilon k_0^2 \leq n. 
\end{eqnarray}

 From the definition of $\Omega_M$ we have 
 \begin{eqnarray}
   \label{eq:casoM}
z_{(p,\bar\ell)}-z_{(p-k,\bar\ell)}\le M k \text{ for every $k$}    
 \end{eqnarray}
which implies that
\begin{eqnarray}
  \label{eq:cl2}
   k\geq k_0 \Rightarrow ~ z_{(p,\bar\ell)}-z_{(p-k,\bar\ell)}\le \epsilon k^2.
\end{eqnarray}
By Lemma \ref{lemma:astuto}, \eqref{eq:ennsm} and \eqref{eq:cl2}, except with probability $2C e^{-k_0}=2C e^{-n/c_0}$, the left-most point of 
any path in $\Xi_{(p,\bar\ell),\eta,W,\Delta n}$ with $x_0(\xi)\le z_{(p,\bar\ell)}$ is to the right of 
$z_{(p-k_0,\bar\ell)}$. 
Since by \eqref{eq:cl1} we have
$z_{(p,\bar\ell)}-z_{(p-k_0,\bar\ell)} \leq n$, we have proved that,
with probability at least $1-2C e^{-n/c_0}$, all points in
$\xi $ are to the right of
$z_{(p,\bar\ell)}-n$. By assumption
$x_0(\xi)\le z_{(p,\bar\ell)}\in[\bar z,\bar z+n]$ and therefore all points in $\xi$ have
horizontal coordinate between $\bar z-n$ and $\bar z+n$. Given this,
the fact that all points in $\xi$ have particle label $(q,\ell)$ with
$\ell\in [\bar \ell-2n,\bar \ell+2n]$ follows immediately from the
second claim in Lemma \ref{lemma:astuto}.

Since there can be at most $n$ particles on line $\bar \ell$ with
$z_{(p,\bar \ell)}\in[\bar z,\bar z+n]$, the statement of claim (i)
follows with probability at least $1-2 n Ce^{-n/ c_0}\ge 1-c e^{-n/c}$ for some $c>0$.

\medskip

Claim (ii) is proven similarly.  It is sufficient to prove the
statement for the left-most particle on line $\bar \ell$ to the right
of $\bar z+n$. Call its $(p_1,\bar\ell)$. By the same argument as
before, we have that, except with probability $2C e^{-n/c_0}$, the
left-most point of any path $\xi$ in
$\Xi_{(p_1,\bar\ell),\eta,W,\Delta n}$ such that $x_0(\xi)\le z_{(p_1,\bar\ell)}$ is to the right of
$z_{(p_1-k_0,\bar\ell)}$. Given that
$z_{(p_1,\bar \ell)}-z_{(p_1-k_0,\bar\ell)}\le n$ and
$z_{(p_1,\bar \ell)}\ge \bar z+n$,  $\xi$ is entirely to the right
of $\bar z$ as wished.  \end{proof}

\begin{proof}[Proof of Proposition \ref{prop:propaeq}]

  By translation invariance, let us assume that $x=(0,0)$ so that
  $\bar z(x)=-1/2,\bar \ell(x)=0$.  From Corollary \ref{cor:A1} there exists
  $M=M(\rho)<\infty, c=c(\rho)<\infty$ such that
\begin{multline}
  \label{eq:orti}
  \pi_\rho(E_{n,k_0}):=\pi_\rho(z_{(p,0)}-z_{(p-k,0)}\le M k \text{ for every } 1\le p\le n+1, k\ge k_0)\\\ge 1-c n(n\vee k_0) e^{-k_0}.
\end{multline}
We assume that $\eta$ satisfies condition $E_{n,k_0}$ with $k_0$
defined as in \eqref{eq:k_0} and \eqref{eq:DE} (note that in this case the pre-factor
$n(n\vee k_0)=O(n^2)$ in the r.h.s. of \eqref{eq:orti} is negligible
with respect to $e^{-k_0}$).  At this point, the proof proceeds very
similarly to that of Proposition \ref{prop:propagation}, because there
we used condition \eqref{eq:casoM} only for $k\ge k_0$ and not for
smaller values. 
\end{proof}

\section{Proof of the hydrodynamic limit before the appearance of shocks}

\label{sec:hydro1}

\subsection{The deterministic PDE: Proof of Proposition \ref{prop:PDE}}

\label{sec:PDE}
This is standard, but we give a sketchy proof for readers not
used to first-order non-linear PDEs. The PDE \eqref{eq:PDE} is solved
as usual by the method of characteristics \cite{Evans}. Let
$D v:\mathbb T\mapsto \mathbb R^2$ denote the differential of the
function $v(\cdot)$ defined in \eqref{eq:v}.  For
$x_0\in\mathbb R^2,t\ge0$ define $x(x_0,t)$ via
\begin{eqnarray}
  \label{eq:x0}
  x=x_0+t Dv(\nabla \phi_0(x_0)).
\end{eqnarray}
We claim first that, under Assumption \ref{assumption:2} on $\phi_0$, $x(\cdot,t)$
defines a global diffeomorphism of $\mathbb R^2$ for $t\le
T$, if $T>0$ is small enough. For this, notice  that the
differential w.r.t. $x_0$ of $x(x_0,t)$ is
\begin{eqnarray}
  \label{eq:diff}
 D_{x_0}x(x_0,t)= \mathbb I+t H_v(\nabla \phi_0(x_0))\cdot H_{\phi_0}(x_0)
\end{eqnarray}
where $\mathbb I$ is the $2\times 2$ identity matrix, $H_v$ is the
Hessian of the function $v:\mathbb T\mapsto \mathbb R$ and $H_{\phi_0}$
the Hessian of $\phi_0:\mathbb R^2\mapsto \mathbb R$. Since $\phi_0$ is
uniformly $C^2$, its gradient is uniformly away from $\partial\mathbb
T$ and $v(\cdot)$ is $C^\infty $ in the interior of $\mathbb T$, it
follows that the determinant of $D_{x_0}x(x_0,t)$ is in $(0,+\infty)$ and  bounded away from $0$ and $+\infty$
uniformly in $x_0$, for $t$ strictly smaller than $T_f$, the first time
where the r.h.s. of \eqref{eq:diff} is not invertible for some $x_0$.
\begin{Remark}
  \label{rem:Tunif}
  Note that the estimate on $T_f$ depends just on the estimate on the
  Hessian of $\phi_0$ and the distance of the range of $\nabla\phi_0$
  from $\partial\mathbb T$.
\end{Remark}
Also, we see that $|x(x_0,t)|\to\infty$ whenever $|x_0|\to\infty$
(simply because $D v(\nabla\phi_0(x_0))$ is uniformly bounded). Then we can
apply a theorem by Hadamard to deduce that $x(\cdot,t)$ is a global
diffeomorphism of $\mathbb R^2$:
\begin{Theorem}(\cite{Hadamard} and \cite[Th. A]{Gordon})
  A $C^1$ map $f$ from $\mathbb R^N$ to $\mathbb R^N$ is a diffeomorphism iff $f$ is proper (i.e. $|x|\to\infty$ implies $|f(x)|\to\infty$) and the Jacobian determinant $\det(\partial f_i/\partial x_j)$ never vanishes.
\end{Theorem}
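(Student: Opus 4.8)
The plan is to prove the non-trivial implication, the converse being immediate: if $f$ is a diffeomorphism, then $f^{-1}$ is continuous, so the preimage of any compact set is compact and $f$ is proper, while differentiating the identity $f^{-1}\circ f=\mathrm{id}$ and applying the chain rule shows that $\det(\partial f_i/\partial x_j)$ never vanishes. So from now on assume that $f$ is proper and that its Jacobian determinant is everywhere non-zero; the goal is to show $f$ is a bijection, since a bijective $C^1$ map whose differential is everywhere invertible is automatically a $C^1$-diffeomorphism (apply the local inverse function theorem at each point).

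First, the non-vanishing of the Jacobian together with the local inverse function theorem makes $f$ a local $C^1$-diffeomorphism; in particular $f$ is an \emph{open} map. On the other hand, a proper continuous map into a locally compact Hausdorff space is \emph{closed}, so $f(\mathbb R^N)$ is a non-empty subset of $\mathbb R^N$ that is at the same time open and closed; since $\mathbb R^N$ is connected, $f$ is surjective. Next, for each $y\in\mathbb R^N$ the fibre $f^{-1}(y)$ is compact (properness) and discrete (local homeomorphism), hence finite, say $f^{-1}(y)=\{x_1,\dots,x_k\}$. Pick pairwise disjoint open sets $U_j\ni x_j$ on which $f$ restricts to a homeomorphism onto an open set. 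Using properness, one finds a compact neighbourhood $K$ of $y$ with $f^{-1}(K)\subset\bigcup_j U_j$: otherwise one could extract, from a nested basis $K_m\downarrow\{y\}$, points $z_m\in f^{-1}(K_m)\setminus\bigcup_j U_j$ lying in the compact set $f^{-1}(K_1)$, whose limit would be a point of $f^{-1}(y)\setminus\bigcup_j U_j$, a contradiction. Shrinking, set $V:=\mathrm{int}(K)\cap\bigcap_j f(U_j)$; then $f^{-1}(V)=\bigsqcup_j\bigl(U_j\cap f^{-1}(V)\bigr)$, each piece mapped homeomorphically onto $V$. Hence $f$ is a covering map, with finite fibres whose cardinality is locally constant, and therefore constant on the connected base $\mathbb R^N$.

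Finally, $\mathbb R^N$ is simply connected (and locally path-connected), so any covering of it with connected total space is a homeomorphism; since the domain $\mathbb R^N$ is connected, $f$ is a bijection, and being a local $C^1$-diffeomorphism it is a global $C^1$-diffeomorphism — and a $C^r$-diffeomorphism if $f$ is $C^r$.

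The step requiring the most care is showing that a proper local homeomorphism is a covering map, i.e. producing the evenly-covered neighbourhood $V$: one must rule out ``escape of mass'' by confining the entire preimage of a small enough neighbourhood of $y$ inside the finitely many chart neighbourhoods $U_1,\dots,U_k$, and this is precisely the place where properness — as opposed to the mere non-vanishing of the Jacobian — is indispensable (without it $f$ need not even be surjective, as $x\mapsto e^x$ on $\mathbb R$ already shows). Everything else is soft point-set topology and covering space theory. (An alternative, closer to Hadamard's original argument, replaces the covering-map step by a direct path-lifting argument, lifting each segment $t\mapsto ty$ through the local inverses of $f$ and using a growth bound on $\|Df(x)^{-1}\|$ — guaranteed here by properness — to ensure the lift exists for all $t$; this again yields that $f$ is a bijection.)
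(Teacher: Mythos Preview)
The paper does not give its own proof of this statement: it is quoted as an external result, with references to Hadamard and to Gordon's note in the \emph{American Mathematical Monthly}, and is used only as a black box to conclude that the characteristic map $x_0\mapsto x(x_0,t)$ is a global diffeomorphism of $\mathbb R^2$.

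Your argument is a correct and standard proof of the Hadamard--Caccioppoli global inverse function theorem. The route you take --- show that a proper local homeomorphism between locally compact Hausdorff spaces is a covering map, then invoke simple connectedness of $\mathbb R^N$ --- is essentially the modern topological proof (and is, in spirit, the argument in Gordon's paper). The alternative you mention at the end, lifting the radial segments $t\mapsto ty$ via an ODE and controlling the lift through a bound on $\|Df(x)^{-1}\|$, is closer to Hadamard's original analytic argument. Both are fine; the covering-space version has the advantage of making transparent exactly where properness enters (finiteness of fibres and the construction of the evenly-covered neighbourhood), as you point out.
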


We call $x_0(\cdot,t)$ the inverse of $x(\cdot,t)$.
Then, the solution of \eqref{eq:PDE} provided by the method of characteristics, namely
\begin{eqnarray}
  \label{eq:solPDE}
  \phi(x,t)=\phi_0(x_0(x,t))+t[D v(\nabla\phi_0(x_0(x,t)))\cdot\nabla\phi_0(x_0(x,t))
-v(\nabla \phi_0(x_0(x,t)))]
\end{eqnarray}
is twice differentiable in space and time, uniformly for
$x\in \mathbb R^2$ and $t<T_f-\epsilon$. This is easily checked:
indeed, \eqref{eq:solPDE} gives
\begin{eqnarray}
  \nabla \phi(x,t)=\nabla \phi_0(y)|_{y=x_0(x,t)}
\end{eqnarray}
(which by the way implies \eqref{eq:more}).
Differentiating once more  w.r.t. $x$ and using that the norm of
$D_x x_0(\cdot,t)$ is uniformly bounded for $t\le T_f-\epsilon$ (as can
be easily checked from \eqref{eq:x0}) and that $\phi_0$ is uniformly $C^2$, the uniform bound on the second space derivatives 
$\phi(x,t)$ follows. The bound on the second  time derivative is proven analogously.

\subsection{Proof of Theorem \ref{th:hl}}

\label{sec:hl}
\subsubsection{A few notations}

For lightness of notations we will  assume that $x=(0,0)$ in \eqref{eq:convergenza}. Let us fix $t<T_f$ and define
\begin{eqnarray}  \label{eq:Amax}
  \mathcal R=\sup_{x\in\mathbb R^2,s\le t} \|H_{\phi(x,s)}\|+|\partial^2_s\phi(x,s)|<\infty.
\end{eqnarray}
Recall from Remark \ref{rem:exM} that the initial condition verifies
$\eta^{(L)}\in \Omega_M$ for some finite $M$, uniformly in $L$. With Proposition \ref{prop:propaeq} in mind, define
\begin{eqnarray}
  \label{eq:MMM}
  M_0:=\max\{M, \max\{1/(1-\rho_1-\rho_2), \rho\in A\}\}<\infty
\end{eqnarray}
where $A$ is the subset of $\mathbb T$ that appears in Proposition \ref{prop:PDE}.
Let also $\Delta$ as in \eqref{eq:DE} with $M$ replaced by $M_0$.  We choose
$\epsilon >0$ as
\begin{eqnarray}
  \label{eq:orti2}
  \epsilon=\max\left\{\varepsilon\leq \frac{\delta \Delta}{3\mathcal R t(6+\Delta^2)}\;\text{ such that }\; K:=  \frac{t}{\varepsilon\Delta} \in \mathbb N \right\}
\end{eqnarray}
with $\delta$ as in the statement of Theorem \ref{th:hl} and we put
\begin{eqnarray}
  \label{eq:tau}
              \tau := \frac{t}{K}=\epsilon \Delta. 
\end{eqnarray}
As a first step we  suitably localize the dynamics, as in  Section \ref{sec:locdin}.
\begin{Definition}
\label{def:tildew}
  If $ W$ is the realization of Poisson processes that defines the
dynamics, we let $\tilde W$  be the sub-set of points of $W$ defined as follows:
a point of $\mathcal W_{(\ell,z)}$  of time coordinate $s\in (k\tau L,(k+1)\tau L],k=0,\dots,K-1$ belongs to $\tilde W$ iff
\begin{eqnarray}
  \label{eq:tfoto}
  z\in [z_-(k),z_+(k)], \ell\in (\ell_-(k),\ell_+(k)),
\end{eqnarray}
with 
\begin{eqnarray}
  \label{eq:zl}
  z_\pm(k)=\pm (2K-k)\frac{t}{K\Delta} L, \quad \ell_\pm(k)=\pm (4K-2k)\frac{t}{K\Delta} L.
\end{eqnarray}
\end{Definition}
Correspondingly, we let $\tilde H(\cdot,s)$ be the height function at
time $s$ for the evolution with $W$ replaced by $\tilde W$.  Note
that, with the conventions of Section \ref{sec:locdin}, the modified
dynamics is ``localized'' in $D(\ell_-(k),\ell_+(k),z_-(k),z_+(k))$ (recall Definition \ref{def:D}) in
the time interval $ (k\tau L,(k+1)\tau L]$. Remark that the rectangle
$D(\ell_-(k),\ell_+(k),z_-(k),z_+(k))$ shrinks as $k$ grows, but its
size is still of order $L$ for $k=K$: in fact,
\[
[z_-(K),z_+(K)]\times [\ell_-(K),\ell_+(K)]=[-tL/\Delta ,tL/\Delta ]\times
[-2tL/\Delta ,2tL/\Delta ].
\]
Thanks to Proposition \ref{prop:propagation}, we know that 
\[H(0,s)=\tilde H(0,s)\text{ for every } s\le tL \]
on an event
of probability at least $1-ce^{-{L}/{c}}$.
Therefore, it will be enough to prove \eqref{eq:convergenza} for  $\tilde H$ instead of $H$.

\subsubsection{Recursion}
\label{sec:recursion}

 With an eye on Definition \ref{def:D}, we let
 \begin{eqnarray}
   \label{eq:Dlk}
 D_k^{(L)}=D(\ell_-(k),\ell_+(k),z_-(k),z_+(k))\subset G^*. 
 \end{eqnarray}
We will prove by induction the following statement:
\begin{Proposition}
\label{prop:induttiva}
  Given $\delta>0$, for  $0\le k\leq K$
one has
\begin{eqnarray}
  \label{eq:ered}
 \lim_{L\to\infty}\mathbb P\left(\exists y \in D^{(L)}_k:  \frac1L \tilde H( y, k\tau L)-\phi(y/L,k \tau)>\frac{k+1}{K} \delta\right) = 0 
\end{eqnarray}
and
\begin{eqnarray}
  \label{eq:ered2}
 \lim_{L\to\infty}\mathbb P\left( \exists y \in D^{(L)}_k:  \frac1L \tilde H(y, k\tau L)-\phi(y/L,k \tau)<-\frac{k+1}{K} \delta\right) = 0 .
\end{eqnarray}
\end{Proposition}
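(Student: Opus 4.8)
The plan is to prove \eqref{eq:ered} and \eqref{eq:ered2} simultaneously, by induction on $k$. The base case $k=0$ is immediate: by Assumption \ref{assumption:1}, $\tilde H(y,0)=h_{\eta^{(L)}}(y)=\lfloor L\phi_0(y/L)\rfloor$, so $|\tfrac1L\tilde H(y,0)-\phi(y/L,0)|\le 1/L<\tfrac1K\delta$ once $L$ is large. Assume now both bounds hold at rank $k$; I describe the argument for the upper bound \eqref{eq:ered} at rank $k+1$, the lower bound being analogous up to one extra difficulty discussed at the end.

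\emph{One-step estimate.} Fix $y\in D_{k+1}^{(L)}$ and let $\rho_y:=\nabla\phi(y/L,k\tau)\in A$. Let $B_y$ be the space-time box analogous to $R_n$ of Proposition \ref{prop:propagation}, but centered at $y$, with $n:=\lfloor\epsilon L\rfloor$; the geometry of the shrinking rectangles in \eqref{eq:zl} and the choice of $\epsilon$ in \eqref{eq:orti2} are designed precisely so that $B_y\subset D_k^{(L)}$ and $\tau L=\epsilon\Delta L\le\Delta n$. On the event of the induction hypothesis, a second-order Taylor expansion of $\phi(\cdot,k\tau)$ (which is $C^2$ with $\|H_\phi\|\le\mathcal R$) gives, for $z\in B_y$,
\[
\tilde H(z,k\tau L)\le L\phi(z/L,k\tau)+\tfrac{k+1}{K}\delta L\le L\phi(y/L,k\tau)+\rho_y\cdot(z-y)+\tfrac{\mathcal R}{2}\tfrac{|z-y|^2}{L}+\tfrac{k+1}{K}\delta L=:h^+(z).
\]
Up to an $O(1)$ rounding, $h^+$ is the height function of some $\eta^+\in\Omega$, a convex quadratic whose gradient stays in a fixed compact subset of $\stackrel\circ{\mathbb T}$ on $B_y$; since this gradient is bounded away from $\partial\mathbb T$, in fact $\eta^+\in\Omega_{M_0}$. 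Now I would chain three comparisons: (i) localizing the dynamics in the smaller window $B_y$ only removes Poisson rings, hence by \eqref{eq:primob} and the semigroup property $\tilde H(y,(k+1)\tau L)\le\tilde H_{B_y,\tilde\eta(k\tau L)}(y,\tau L)$, which by local stochastic domination (Theorem \ref{th:stocdomloc}) and $\tilde\eta(k\tau L)\le\eta^+$ on $B_y$ is $\le\tilde H_{B_y,\eta^+}(y,\tau L)$; (ii) since $\eta^+\in\Omega_{M_0}$ and $B_y\supseteq R_n$, Proposition \ref{prop:propagation} identifies, outside probability $ce^{-n/c}$, the $B_y$-localized evolution of $\eta^+$ with its full evolution at the point $y$ up to time $\tau L$; (iii) comparing the flat configuration $\eta^+$ with the evolution started from $\pi_{\rho_y}$: sample $\eta^\star\sim\pi_{\rho_y}$ independently of $W$, use Lemma \ref{lemma:GFF} to say that, outside probability $\tfrac1ce^{-c(\log L)^{1+\epsilon'}}$ (any fixed $\epsilon'>0$), the height of $\eta^\star$ is within $(\log L)^{1+\epsilon'}+O(1)$ of a linear function of slope $\rho_y$ on $B_y$, shift $\eta^\star$ by a global integer so that it dominates $\eta^+$ there, and combine Theorem \ref{th:stocdomloc}, Proposition \ref{prop:propaeq}, and the law of large numbers \eqref{eq:flutt} for the stationary current. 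This yields, with constants uniform for $\rho_y\in A$,
\[
\tilde H(y,(k+1)\tau L)\le h^+(y)-v(\rho_y)\,\tau L+O(\mathcal R\epsilon^2 L)+O\big((\log L)^{1+\epsilon'}\big)+O\big((\tau L)^\gamma\big),
\]
where $\gamma<1$ is the fluctuation exponent of \eqref{eq:flutt}.

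\emph{Closing the induction.} Since $h^+(y)=L\phi(y/L,k\tau)+\tfrac{k+1}{K}\delta L$ and, by the PDE together with $|\partial^2_t\phi|\le\mathcal R$, one has $\phi(y/L,(k+1)\tau)=\phi(y/L,k\tau)-v(\rho_y)\tau+O(\mathcal R\tau^2)$, the previous display gives $\tilde H(y,(k+1)\tau L)\le L\phi(y/L,(k+1)\tau)+\tfrac{k+1}{K}\delta L+O(\mathcal R\epsilon^2 L)+O(\mathcal R\tau^2 L)+o(L)$. The choice of $\epsilon$ in \eqref{eq:orti2} is exactly what makes $O(\mathcal R\epsilon^2 L)+O(\mathcal R\tau^2 L)+o(L)\le\tfrac1K\delta L$ for $L$ large (using that $\epsilon$ is fixed, so the guaranteed slack $\tfrac1K\delta L-O(\mathcal R\epsilon^2L)$ is of order $\epsilon L$ and dominates any $o(L)$ term), so that $\tilde H(y,(k+1)\tau L)\le L\phi(y/L,(k+1)\tau)+\tfrac{k+2}{K}\delta L$. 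A union bound over the $O(L^2)$ points $y\in D_{k+1}^{(L)}$ then completes the induction, since every discarded event has probability decaying faster than any power of $L$ (stretched-exponentially in $L$ from Propositions \ref{prop:propagation}--\ref{prop:propaeq}, and like $\exp(-c(\log L)^{1+\epsilon'})$ from Lemma \ref{lemma:GFF}).

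\emph{The lower bound and the main obstacle.} For \eqref{eq:ered2} one runs the same scheme with $h^+$ replaced by the \emph{concave} quadratic lower envelope $h^-(z):=L\phi(y/L,k\tau)+\rho_y\cdot(z-y)-\tfrac{\mathcal R}{2}|z-y|^2/L-\tfrac{k+1}{K}\delta L$ and all inequalities reversed, replacing flat configurations from below and using that \emph{more} Poisson rings lower the height. The genuine asymmetry — and what I expect to be the main difficulty — is step (i): shrinking the localization window from $D_k^{(L)}$ to $B_y$ was free for an \emph{upper} bound, but for a \emph{lower} bound one must instead let $\tilde\eta(k\tau L)$ evolve under a dynamics with more rings, and to control the behaviour near $y$ one then needs a finite-speed-of-propagation statement for $\tilde\eta(k\tau L)$ itself. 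As stressed in the introduction, this configuration need not lie in any $\Omega_M$, and the height control at scale $\delta L$ provided by the induction hypothesis does \emph{not} by itself yield a spacing bound on the scale $k_0\sim\epsilon L\ll\delta L$ used in the proof of Proposition \ref{prop:propagation}. The way around this is to strengthen the induction by also carrying a \emph{local} spacing bound — say, that $\tilde\eta(k\tau L)$ restricted to $D_k^{(L)}$ lies in $\Omega_{M_k}$ for a sequence $M_k$ depending on $\delta,t$ but not on $L$ and growing at most additively in $k$ — and to verify that it propagates: the particles relevant near $y$ during the $(k{+}1)$-th time step start, at time $k\tau L$, inside $D_k^{(L)}$, where their left-spacings are $\le M_k$, hence they travel only a bounded distance in time $\tau L$; since only the tail $k\ge k_0$ of the spacing bound enters the proof of Proposition \ref{prop:propagation}, a control local to $D_k^{(L)}$ suffices to localize the dynamics in $B_y$, after which the argument becomes symmetric to the upper bound. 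Establishing this local spacing propagation cleanly, and checking its compatibility with the shrinking rectangles $D_k^{(L)}$, is in my view the technical heart of the proof.
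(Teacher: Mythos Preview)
Your upper-bound scheme is essentially the paper's, but there is a genuine gap in the union-bound step. You sum the one-point error probabilities over the $O(L^2)$ points of $D_{k+1}^{(L)}$, claiming each decays faster than any power of $L$. This is true for the contributions from Propositions~\ref{prop:propagation}--\ref{prop:propaeq} and Lemma~\ref{lemma:GFF}, but \emph{not} for the stationary-current fluctuation bound \eqref{eq:flutt}: that statement gives only $\mathbb P_{\pi_\rho}(|H(x,t)-H(x,0)+v(\rho)t|\ge t^\eta)\to 0$ with no rate whatsoever, so a union bound over $L^2$ points (and over the $L$-dependent slopes $\rho_y$) is unjustified. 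The paper circumvents this by first proving the one-point bound with margin $\tfrac{k+3/2}{K}\delta$ rather than $\tfrac{k+2}{K}\delta$, then restricting to a grid $(\lfloor\xi L\rfloor\mathbb Z)^2\cap D_{k+1}^{(L)}$ of cardinality $O(\xi^{-2})$ \emph{independent of $L$}, and finally using that both $\tilde H$ and $\phi$ are $1$-Lipschitz in space to extend to all of $D_{k+1}^{(L)}$ at the cost of $2\xi$, which is absorbed by the spare $\tfrac{1}{2K}\delta$.

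The ``main obstacle'' you identify for the lower bound is a phantom, created by your ordering of operations. You localize the actual configuration $\tilde\eta(k\tau L)$ to $B_y$ first and replace it afterwards; since enlarging the set of Poisson rings lowers the height, this indeed would force a propagation estimate on $\tilde\eta(k\tau L)$, for which no $\Omega_M$ control is available. The paper's order avoids this entirely: on the induction event one \emph{first} replaces $\tilde H(\cdot,k\tau L)$ on $D_k^{(L)}$ by the deterministic profile $\lceil L\phi(\cdot/L,k\tau)\pm\tfrac{k+1}{K}\delta L\rceil$ via Theorem~\ref{th:stocdomloc} (which needs only the height ordering on $D_k^{(L)}$, already supplied by the induction hypothesis), and \emph{then} applies Proposition~\ref{prop:propagation} to this replacement configuration, which lies in $\Omega_{M_0}$ because $\nabla\phi(\cdot,k\tau)\in A$ by \eqref{eq:more}. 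With this order the two bounds are fully symmetric, Proposition~\ref{prop:propagation} is never invoked on $\tilde\eta(k\tau L)$, and no strengthened induction carrying spacing bounds is needed.
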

Note that these statements for $k=K$ (taking $y=0$) imply the claim of
Theorem \ref{th:hl} at time $t$ (recall we are taking without loss of generality
$x=0$ in \eqref{eq:convergenza}, and the point $0$ is included in $D^{(L)}_K$).

\begin{proof}
  [Proof of Proposition \ref{prop:induttiva}] We will prove only
  \eqref{eq:ered}, the proof of \eqref{eq:ered2} being analogous. 
  Statement \eqref{eq:ered} is  true for $k=0$ (at time zero
  $\tilde H(\cdot,0)=h_\eta(\cdot)$ and the difference between
  $h_\eta(\cdot)/L$ and $\phi_0(\cdot/L)$ is deterministically $O(1/L)$,
  see \eqref{eq:condiniz}). We assume that \eqref{eq:ered} holds for
  some $k$ and prove it for $k+1$.  For every
  $u\in D^{(L)}_{k+1}$, we will show that
  \begin{eqnarray}
    \label{eq:provando}
\lim_{L\to\infty}\mathbb P\left( \frac1L \tilde H(  u, (k+1)\tau L)-\phi( u/L,(k+1) \tau)>\frac{k+3/2}{K} \delta\right) = 0.     
  \end{eqnarray}
Then, by a simple approximation argument we will obtain  \eqref{eq:ered} at level $k+1$.

Call $E_{k}$ the complementary of the event in parenthesis in
\eqref{eq:ered}. Suppose we are on the event
$\cap_{j=1}^k E_j$, whose probability is $1+o(1)$ as $L\to\infty$ (note that
$k\le K$ and $K$ does not grow with $L$). Since in particular we are
on event $E_k$, by monotonicity of the dynamics (Theorem
\ref{th:domina}) we can replace the height function 
\[\tilde H( y ,k\tau L), \,y\in D^{(L)}_k\]
by the higher height function
\begin{eqnarray}
  \label{eq:H'}
 H'( y ,k\tau L):=\lceil L \phi(y/L,\tau k)+L\frac {k+1}{K}\delta\rceil, \, y\in D^{(L)}_k.  
\end{eqnarray}
Starting from such configuration, we let the dynamics run in the time
interval $[k\tau L,(k+1)\tau L]$. Since in such time interval the dynamics is localized in  $D^{(L)}_k$  and we are interested in the height evolution inside $D^{(L)}_{k+1}\subset D^{(L)}_k $, by Proposition \ref{claim:soff} it is irrelevant how we define
$ H'(\cdot,k\tau L)$ outside $D_k^{(L)}$. For instance we can establish that \eqref{eq:H'} holds for every $y\in G^*$.

Recall from Proposition \ref{prop:PDE} that the gradient of
$\phi(\cdot,t)$ is in $A$ for all times, so that (cf. Remark
\ref{rem:exM}) the particle configuration with height function
$ H'(\cdot, k\tau L)$ is in $\Omega_M$, for the same $M$ as at time zero.  Therefore, by Proposition
\ref{prop:propagation} we can localize, in the whole time interval
$[k\tau L,(k+1)\tau L]$, the 
dynamics in the domain 
\begin{eqnarray}
  \label{eq:gato'}
D(u):=  D(\bar \ell( u)-2\epsilon L, \bar \ell( u)+ 2\epsilon L, \bar z( u)-\epsilon L, \bar z( u)+\epsilon L):
\end{eqnarray}
 except with a
probability $ce^{-L/c}$, the evolution of the
height at site $u$ for times $s\in [k\tau L,(k+1)\tau L]$ is
not affected (recall that $\tau=\epsilon \Delta$).
Note that $D(u)\subset  D^{(L)}_{k}$ because $ u\in D^{(L)}_{k+1}$: this is the reason why we defined the domains $D^{(L)}_{k}$ to be decreasing with $k$.


Now that we have localized the dynamics in the rectangle
$D( u)$, let us apply monotonicity (Theorem
\ref{th:domina}) once more and replace the  height function (at time
$k\tau L$)
\[
 H'(y,k\tau L), y\in G^*
\]
by a height function $H''(\cdot,k\tau L) $
defined as follows:
\begin{Definition}
The space gradients of the  function $H''(\cdot, k\tau L)$ have the same law as the gradient of $h_\eta$, with $\eta$ sampled from  the Gibbs measure $\pi_\rho$, with $\rho:= \nabla \phi ( u/L, k \tau)$. Moreover, 
the overall additive constant of the height function is fixed by the
condition
\begin{eqnarray}
\label{eq:add}
   H''(  u, k\tau L):= \lfloor L \phi(u/L,\tau k) + 6L \mathcal R \epsilon^2 + L \frac{k+1}{K} \delta\rfloor,
\end{eqnarray}
where we recall that $\mathcal R$ was defined in \eqref{eq:Amax}.
\label{def:H''}
\end{Definition}
As was the case for $H'$, also for $H''$ it is irrelevant how we define it outside the domain $D(u)$ where the dynamics has been localized; however, it is convenient to have  $H''(y, k\tau L)$  defined as above for every $y\in G^*$.

We will prove at the end of the present section that with high
probability the height function $ H''$ is higher than $ H'$ in
the domain $D( u)$ of interest:
\begin{Lemma}
\label{lemma:Hche}
With probability going to $1$ as $L$ goes to infinity, we have
\begin{eqnarray}
  \label{eq:Hche}
  H'(y, k\tau L)\le H''(y,k\tau L) \quad \text{for every } y\in D( u).
\end{eqnarray}
\end{Lemma}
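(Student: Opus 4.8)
\textbf{Proof strategy for Lemma \ref{lemma:Hche}.} The plan is to squeeze both $H'(\cdot,k\tau L)$ and $H''(\cdot,k\tau L)$ on the small box $D(u)$ between one and the same affine function of $y$ — the linearisation of $L\,\phi(\cdot/L,k\tau)$ at the macroscopic point $u/L$ — and then to observe that the additive cushion $6L\mathcal R\epsilon^2$ built into the normalisation \eqref{eq:add} of $H''$ is comfortably larger than all the resulting error terms. To set up, I would put $\rho:=\nabla\phi(u/L,k\tau)$, which by \eqref{eq:more} lies in the compact set $A\subset\stackrel\circ{\mathbb T}$ (so that $\pi_\rho$, and hence the random height function $H''$ of Definition \ref{def:H''}, is well defined); I would also record the elementary geometric fact that, in the $G^*$-coordinates $\bar\ell(x)=x_2-x_1$, $\bar z(x)=(x_1+x_2-1)/2$, every $y$ in the rectangle $D(u)$ of \eqref{eq:gato'} satisfies $|y_i-u_i|\le 2\epsilon L$ for $i=1,2$, hence $|y/L-u/L|^2\le 8\epsilon^2$. (There is no loss in also assuming $\mathcal R>0$: enlarging $\mathcal R$ only shrinks the admissible range of $\epsilon$ in \eqref{eq:orti2}, which is harmless.)

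For the \emph{upper bound on $H'$}, I would Taylor-expand $\phi(\cdot,k\tau)$ to second order at $u/L$: using the Hessian bound $\|H_{\phi(\cdot,k\tau)}\|\le\mathcal R$ from \eqref{eq:Amax} together with $|y/L-u/L|^2\le 8\epsilon^2$, this gives $L\,\phi(y/L,k\tau)\le L\,\phi(u/L,k\tau)+\rho\cdot(y-u)+4L\mathcal R\epsilon^2$, and hence, by the definition \eqref{eq:H'} of $H'$ and $\lceil a\rceil\le a+1$, $H'(y,k\tau L)\le L\,\phi(u/L,k\tau)+\rho\cdot(y-u)+4L\mathcal R\epsilon^2+L\tfrac{k+1}{K}\delta+1$ for all $y\in D(u)$. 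For the matching \emph{lower bound on $H''$}, I would use that by Definition \ref{def:H''} the increments $H''(y,k\tau L)-H''(u,k\tau L)$ have the law of the height increments $h_{\eta'}(y)-h_{\eta'}(u)$ of a sample $\eta'\sim\pi_\rho$, whose mean slope is exactly $\rho$; since $D(u)-u$ is contained in a ball of radius $3\epsilon L$, Lemma \ref{lemma:GFF} (applied, after translating to the origin, with box size $3\epsilon L$) shows that, off an event of probability $\le\tfrac1c e^{-c(\log(3\epsilon L))^{1+\alpha}}=o(1)$, one has $|H''(y,k\tau L)-H''(u,k\tau L)-\rho\cdot(y-u)|<(\log L)^{1+\alpha}$ simultaneously for all $y\in D(u)$, for any fixed $\alpha>0$ and $L$ large. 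Combining this with the normalisation \eqref{eq:add} and $\lfloor a\rfloor\ge a-1$ yields $H''(y,k\tau L)\ge L\,\phi(u/L,k\tau)+\rho\cdot(y-u)+6L\mathcal R\epsilon^2+L\tfrac{k+1}{K}\delta-1-(\log L)^{1+\alpha}$ on the good event.

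Subtracting the two bounds, on the good event and for every $y\in D(u)$ one obtains $H''(y,k\tau L)-H'(y,k\tau L)\ge 2L\mathcal R\epsilon^2-2-(\log L)^{1+\alpha}$, which is non-negative for all $L$ large since $\mathcal R>0$ and $\epsilon>0$ do not depend on $L$ while $(\log L)^{1+\alpha}=o(L)$; this is exactly \eqref{eq:Hche}. I do not expect a genuine obstacle here: the only non-elementary input is the polylogarithmic fluctuation estimate of Lemma \ref{lemma:GFF}, and the one point requiring a little care is that $\rho=\nabla\phi(u/L,k\tau)$ depends on $L$, so that estimate must be invoked \emph{uniformly} over $\rho$ in the compact set $A$ (which is legitimate, the constant $c$ in Lemma \ref{lemma:GFF} being controllable uniformly on compact subsets of $\stackrel\circ{\mathbb T}$); everything else amounts to checking that the deterministic cushion $2L\mathcal R\epsilon^2$ — what remains of the $6L\mathcal R\epsilon^2$ of \eqref{eq:add} after absorbing the Taylor error $4L\mathcal R\epsilon^2$ — dominates both the $O(1)$ rounding errors and the $(\log L)^{1+\alpha}$ fluctuations.
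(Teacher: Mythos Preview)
Your proposal is correct and follows essentially the same route as the paper: linearise $\phi(\cdot,k\tau)$ at $u/L$ using the Hessian bound \eqref{eq:Amax} to upper-bound $H'$, invoke Lemma \ref{lemma:GFF} to lower-bound $H''$ by the same affine function minus a polylogarithmic correction, and observe that the cushion built into \eqref{eq:add} absorbs all errors. The only differences are cosmetic (you get a Taylor remainder $4\mathcal R\epsilon^2$ where the paper uses the cruder $5\mathcal R\epsilon^2$ from $|y-u|\le 3\epsilon L$, and you flag explicitly the uniformity in $\rho\in A$ needed for Lemma \ref{lemma:GFF}).
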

Summarizing what we discussed so far, we see that
\begin{multline}
 \mathbb P\left( \frac1L \tilde H(  u, (k+1)\tau L)-\phi( u/L,(k+1) \tau)>\frac{k+3/2}{K} \delta\right) \\\le
\mathbb P\left( \frac1L H''(  u, (k+1)\tau L)-\phi( u/L,(k+1) \tau)>\frac{k+3/2}{K} \delta\right)+\epsilon_L
\end{multline}
with $\lim_{L\to\infty}\epsilon_L=0$.

At the end of this section we will prove the following statement, which implies the desired claim \eqref{eq:provando}:
\begin{Lemma}
Let $ H''(\cdot,s), s>k\tau L$ the height function at time $s$ for the dynamics localized in $D(u)$ in the time interval $[k\tau L,(k+1)\tau L]$, with
initial condition at time $k\tau L$ given by $H''(\cdot,k\tau L)$  as in Definition \ref{def:H''}. Then,
  \label{lemma:eqevol}
  \begin{eqnarray}
\lim_{L\to\infty}\mathbb P\left( \frac1L H''(  u, (k+1)\tau L)-\phi( u/L,(k+1) \tau)>\frac{k+3/2}{K} \delta\right)    =0.
  \end{eqnarray}
\end{Lemma}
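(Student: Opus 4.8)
The strategy is to read the increment $H''(u,(k+1)\tau L)-H''(u,k\tau L)$ off the \emph{stationary} dynamics. By Definition~\ref{def:H''} the configuration underlying $H''(\cdot,k\tau L)$ has its gradients distributed according to $\pi_\rho$ with $\rho=\nabla\phi(u/L,k\tau)$; since the interface in the stationary state $\pi_\rho$ moves at speed $v(\rho)$ with sub-polynomial fluctuations, after the time slice of length $\tau L$ the height at $u$ should have dropped by $v(\rho)\tau L+o(L)$. Matching this against the time-Taylor expansion of the PDE solution $\phi$ closes the estimate, the numerical margins being exactly those arranged in \eqref{eq:add} and \eqref{eq:orti2}.

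\emph{Step 1: removing the localization.} By translation invariance of $\pi_\rho$ and of the dynamics we may assume $u=0$. Since $\rho=\nabla\phi(u/L,k\tau)\in A$ by \eqref{eq:more}, the uniform constants $M_0$ and $\Delta$ chosen in \eqref{eq:MMM}--\eqref{eq:DE} are admissible in Proposition~\ref{prop:propaeq} applied to $\pi_\rho$. Viewing the evolution on $[k\tau L,(k+1)\tau L]$ as an evolution on $[0,\tau L]$ started from $H''(\cdot,k\tau L)$ (the semigroup property, Proposition~\ref{prop:bendefinito}(3)), one observes that the Poisson clocks retained by the localization in $D(u)$ of \eqref{eq:gato'} contain all those of the box $R_{\epsilon L}$ of Proposition~\ref{prop:propaeq}: indeed $D(u)$ has half-widths $\epsilon L$ and $2\epsilon L$ in the $\bar z$ and $\bar\ell$ directions, while $\Delta\cdot\epsilon L=\tau L$. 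Hence Proposition~\ref{prop:propaeq} (whose conclusion does not see the irrelevant additive constant of the height) gives that, outside an event of probability $\le c e^{-\epsilon L/c}$, the increment $H''(u,(k+1)\tau L)-H''(u,k\tau L)$ coincides with the corresponding increment of the \emph{non-localized} dynamics started from $H''(\cdot,k\tau L)$.

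\emph{Steps 2--3: stationarity, fluctuations, comparison with the PDE.} The non-localized dynamics started from a $\pi_\rho$-distributed gradient field is stationary (its gradients have law $\pi_\rho$ at all times), so that increment is distributed as $H(0,\tau L)-H(0,0)$ under $\mathbb P_{\pi_\rho}$, whose mean equals $-v(\rho)\tau L$ (Section~\ref{sec:stationary}). Applying \eqref{eq:flutt} with an exponent $\eta<1$, for which $(\tau L)^\eta=o(L)$ since $\tau=\epsilon\Delta$ is a fixed positive constant, we get that with probability tending to $1$
\[
\tfrac1L H''(u,(k+1)\tau L)=\tfrac1L H''(u,k\tau L)-v(\rho)\tau+o(1)=\phi(u/L,k\tau)+6\mathcal R\epsilon^2+\tfrac{k+1}{K}\delta-v(\rho)\tau+o(1),
\]
the second equality using \eqref{eq:add}. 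On the other hand, since $\phi$ solves \eqref{eq:PDE} and $\sup_{x,s\le t}|\partial_s^2\phi(x,s)|\le\mathcal R$ by \eqref{eq:Amax}, Taylor's formula in time gives, for some $\theta\in(k\tau,(k+1)\tau)\subset(0,t)$,
\[
\phi(u/L,(k+1)\tau)=\phi(u/L,k\tau)-v(\nabla\phi(u/L,k\tau))\tau+\tfrac{\tau^2}{2}\partial_s^2\phi(u/L,\theta)=\phi(u/L,k\tau)-v(\rho)\tau+\mathrm{err},
\]
with $|\mathrm{err}|\le\mathcal R\tau^2/2$. Subtracting the two displays, with probability tending to $1$,
\[
\tfrac1L H''(u,(k+1)\tau L)-\phi(u/L,(k+1)\tau)\le 6\mathcal R\epsilon^2+\tfrac{\mathcal R\tau^2}{2}+\tfrac{k+1}{K}\delta+o(1)=\mathcal R\epsilon^2\bigl(6+\tfrac{\Delta^2}{2}\bigr)+\tfrac{k+1}{K}\delta+o(1).
\]
Since $\tfrac1K=\tfrac{\epsilon\Delta}{t}$ and, by \eqref{eq:orti2} together with $3(6+\Delta^2)\ge 12+\Delta^2$, one has $\mathcal R\epsilon^2\bigl(6+\tfrac{\Delta^2}{2}\bigr)\le\tfrac{\delta}{2K}$ with strictly positive slack, the right-hand side is $\le\tfrac{k+3/2}{K}\delta$ for $L$ large enough outside an event of vanishing probability, which is the claim.

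\emph{Main obstacle.} The delicate point is Step~1: one must check that the localization region $D(u)$ of \eqref{eq:gato'}, together with the choice $\tau=\epsilon\Delta$, is precisely large enough that the clocks kept by the localization cover the space--time box on which Proposition~\ref{prop:propaeq} requires agreement with the full clock field --- only then may one substitute the stationary infinite-lattice dynamics and invoke the explicit speed $v(\rho)$ and the sub-polynomial fluctuation bound \eqref{eq:flutt}. Once that substitution is licensed, the remainder is the one-step mean-plus-fluctuation estimate matched against the second-order time-Taylor expansion of $\phi$, and the bookkeeping of constants is exactly what \eqref{eq:add} and \eqref{eq:orti2} were set up to absorb.
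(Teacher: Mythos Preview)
Your proof is correct and follows essentially the same approach as the paper's: first use Proposition~\ref{prop:propaeq} to replace the localized dynamics in $D(u)$ by the full stationary dynamics at $u$, then invoke the fluctuation bound \eqref{eq:flutt} and a second-order time expansion of $\phi$ to conclude via the numerical choice \eqref{eq:orti2}. The only cosmetic differences are that you make the matching between $D(u)$ and the box $R_{\epsilon L}$ of Proposition~\ref{prop:propaeq} more explicit, and you use the Taylor remainder $\mathcal R\tau^2/2$ rather than $\mathcal R\tau^2$.
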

Finally, let us show how the knowledge of  \eqref{eq:provando} for every $u\in D^{(L)}_{k+1}$ implies 
\eqref{eq:ered} at level $k+1$. 
In fact, for any fixed $\xi>0$ \eqref{eq:provando}  implies 
 \begin{gather}
 \label{eq:tvc}
\lim_{L\to\infty}\mathbb P\left(\forall y\in D^{(L)}_{k+1}\cap (\lfloor \xi L\rfloor \mathbb Z)^2, \frac1L \tilde H( y, (k+1)\tau L)-\phi( y/L,(k+1) \tau)\le \frac{k+3/2}{K} \delta\right) = 1, 
  \end{gather}
  simply because $D^{(L)}_{k+1}\cap (\lfloor \xi L\rfloor \mathbb Z)^2$
  contains a finite number (of order $\xi^{-2}$) of points $u$.  On the
  other hand, both the height function $\tilde H$ and $\phi$ are (deterministically) $1$-Lipschitz
  in space, so that in \eqref{eq:tvc} we can replace
  ``$\forall y\in D^{(L)}_{k+1}\cap (\lfloor \xi L\rfloor \mathbb Z)^2$'' with
  ``$\forall y\in D^{(L)}_{k+1}$'', provided we change $(k+3/2)\delta/K$
  into $(k+3/2)\delta/K+2\xi$. Choosing $\xi=\delta/(4K)$ gives
  \eqref{eq:ered} at level $k+1$.
\end{proof}

\begin{proof}[Proof of Lemma \ref{lemma:Hche}]
  Remark first of all that, if $y\in D( u)$, then
  $| u-y|\le 3\epsilon L$.  We know (cf. \eqref{eq:Amax}) that the second space derivatives of $\phi$
  are  bounded by $\mathcal R$ and therefore we have
  \begin{eqnarray}
    \label{eq:qui1}
 |\phi(y/L,k\tau)-\phi( u/L,k \tau)- \nabla \phi ( u/L, k \tau)\cdot(y-u)/L|\leq 5\mathcal R\epsilon^2.    
  \end{eqnarray}
From the definition of $H'$ we deduce that
\begin{eqnarray}
  \label{eq:H1}
   H'(y,k\tau L)\le \left\lceil L\phi( u/L,k \tau)+\nabla \phi ( u/L, k \tau)\cdot(y-u)+\frac {L(k+1)} {K} \delta+5\mathcal R L\epsilon^2\right\rceil.
\end{eqnarray}
On the other hand, from the properties of the Gibbs measure $\pi_\rho$ and more precisely from Lemma \ref{lemma:GFF} we deduce that, with probability $1+o(1)$,
\begin{multline}
  \label{eq:H2}
  H''(y,k\tau L)\ge  L\left[ \phi( u/L,\tau k)+\nabla \phi ( u/L, k \tau)
\cdot(y-u)/L +  6\mathcal R \epsilon ^2 +  \frac{k+1}{K} \delta\right]-(\log L)^2\\ \text{for every}\quad y\in D( u).
\end{multline}
The claim follows.
\end{proof}
\begin{proof}[Proof of Lemma \ref{lemma:eqevol}] Recall that the
  height function $H''$ 
  at time $k\tau L$ is chosen according to the equilibrium measure
  $\pi_\rho, \rho=\nabla \phi( u/L,k\tau)$, and the global additive
  constant is fixed by \eqref{eq:add}. We are interested in the height
  at time $(k+1)\tau L$ at site $ u$, which is at the center of the
  domain $D( u)$ where the dynamics is localized. By Proposition
  \ref{prop:propaeq}, the localized dynamics (in $D(u)$) and the full
  (i.e. non-localized) dynamics in the infinite lattice $G^*$ induce
  exactly the same height evolution at site $ u$ in the time interval
  $[\tau k L, (k+1)\tau L]$, except with exponentially small
  probability in $L$. On the other hand, for the dynamics on the
  infinite lattice with initial condition sampled from the stationary
  measure $\pi_\rho$ we can apply \eqref{eq:flutt} (say with
  $\eta=1/2$), which gives
  \begin{multline}
    \label{eq:ok}
\lim_{L\to\infty}    \mathbb P\left(\frac1L H''( u,(k+1)\tau L)>\phi( u/L,k\tau)-v(\nabla \phi( u/L,k\tau))\tau\right.
\\\left.+6\mathcal R \epsilon^2+\frac {k+1}{K}\delta
    + \frac1{\sqrt L}\right)=0.
  \end{multline}
Finally, from  smoothness in time of the solution $\phi(\cdot,\cdot)$ (cf. \eqref{eq:Amax}),
\begin{eqnarray}
  \label{eq:qui2}
|\phi( u/L,k\tau)-v(\nabla \phi( u/L,k\tau))\tau-\phi( u/L,(k+1)\tau)|\le \mathcal R \tau^2.  
\end{eqnarray}
Then, the statement of the Lemma follows provided that
\begin{eqnarray}
  \label{eq:provide}
  6\mathcal R \epsilon^2+\frac {k+1}{K}\delta+L^{-1/2}+\mathcal R\tau^2\le \frac{k+3/2}{K}\delta.
\end{eqnarray}
We leave to the reader to check that this is guaranteed (for $L$ large) by the choices of parameters we made in \eqref{eq:orti2} and \eqref{eq:tau}.
\end{proof}

\begin{Remark}
\label{rem:stripp}
  Note that we did not really need the fact that $\phi(\cdot,\cdot)$ is
  $C^2$ with respect to space and time, but rather that its space
  gradient $\nabla \phi(\cdot,t)$ is a Lipschitz function of space (this was used in
  \eqref{eq:qui1}) and that $\partial_t \phi(x,\cdot)$ is Lipschitz in
  time (this was used in \eqref{eq:qui2}), with Lipschitz constants bounded by
  $\mathcal R<\infty$ up to time $T$.
\end{Remark}

\section{Hydrodynamics with shocks: Proof of Theorem \ref{th:shock}
  and Proposition \ref{th:guillaume}}
\label{sec:shocks}

The proof of Theorem \ref{th:shock} consists of an easy lower
bound, Proposition \ref{prop:lowb}, and of a more subtle upper
bound, Proposition \ref{prop:uppb}.  Fortunately, we will see that
most of the work needed for the upper bound has already been done in
the proof of Theorem \ref{th:hl}.

\subsection{Lower bound}
We start by proving:
\begin{Proposition}
\label{prop:lowb}
  For every $x\in \mathbb R^2,t>0,\delta>0$,
  \begin{eqnarray}
    \label{eq:lowb}
    \lim_{L\to\infty}  \mathbb P\left(\frac1L H(\lfloor xL\rfloor, tL)<\phi(x,t)-\delta\right)=0.
  \end{eqnarray}
\end{Proposition}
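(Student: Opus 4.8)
The plan is to reduce the lower bound to the already-established hydrodynamic limit for smooth data (Theorem \ref{th:hl}) applied to \emph{linear} initial profiles, for which the PDE \eqref{eq:PDE} never develops shocks, by combining the Hopf formula \eqref{eq:solvisc} with stochastic domination (Theorem \ref{th:stocdom}).

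Fix $x\in\mathbb R^2$, $t>0$ and $\delta>0$. First I would pick, using \eqref{eq:solvisc}, a slope $\rho$ with $\phi_0^*(\rho)<\infty$ and
\[
\rho\cdot x-v(\rho)\,t-\phi_0^*(\rho)\ \ge\ \phi(x,t)-\tfrac\delta2 .
\]
Since $\phi_0$ is convex and $\nabla\phi_0\in A$ a.e. with $\overline{\mathcal A}\subset\stackrel\circ{\mathbb T}$, standard Fenchel duality (as recalled after \eqref{eq:solvisc}) shows that $\phi_0^*$ is finite only on the compact set $\overline{\mathrm{conv}(\mathcal A)}\subset\stackrel\circ{\mathbb T}$, so $\rho\in\stackrel\circ{\mathbb T}$ and $v(\rho)$ is well defined. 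By Fenchel's inequality, the linear profile $\phi_0^{\rm pl}(z):=\rho\cdot z-\phi_0^*(\rho)$ satisfies $\phi_0^{\rm pl}\le\phi_0$ pointwise. Let $\eta'^{(L)}\in\Omega$ be the planar configuration with $h_{\eta'^{(L)}}(z)=\lfloor L\phi_0^{\rm pl}(z/L)\rfloor=\lfloor\rho\cdot z-L\phi_0^*(\rho)\rfloor$ (this is an admissible configuration by Lemma \ref{lemma:approssimazione}, since $\nabla\phi_0^{\rm pl}=\rho\in\stackrel\circ{\mathbb T}$). Monotonicity of the floor gives $h_{\eta'^{(L)}}(z)\le h_{\eta^{(L)}}(z)$ for all $z\in\mathbb Z^2$, so Theorem \ref{th:stocdom}, with both evolutions driven by the same clock realization $W$, yields $H(\lfloor xL\rfloor,tL)\ge H'(\lfloor xL\rfloor,tL)$ almost surely, where $H'$ denotes the height started from $\eta'^{(L)}$.

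It then remains to understand $H'$. The datum $\phi_0^{\rm pl}$ satisfies Assumption \ref{assumption:2} (it is $C^2$, with vanishing Hessian and constant gradient $\rho$; take $A=\{\rho\}$), and $\phi^{\rm pl}(z,s):=\rho\cdot z-\phi_0^*(\rho)-v(\rho)s$ is a classical solution of \eqref{eq:PDE} for \emph{all} $s\ge 0$ (its characteristics, all parallel to $Dv(\rho)$, never cross), so $T_f=+\infty$ for this initial condition. Theorem \ref{th:hl} therefore gives $\frac1L H'(\lfloor xL\rfloor,tL)\to\rho\cdot x-\phi_0^*(\rho)-v(\rho)t$ in probability; combining this with the domination of the previous paragraph and the choice of $\rho$, we get, with probability tending to $1$,
\[
\frac1L H(\lfloor xL\rfloor,tL)\ \ge\ \rho\cdot x-\phi_0^*(\rho)-v(\rho)t-\tfrac\delta2\ \ge\ \phi(x,t)-\delta ,
\]
which is \eqref{eq:lowb}. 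This direction is genuinely the easy one; the only step deserving some care is checking that a near-optimal slope in \eqref{eq:solvisc} lies in $\stackrel\circ{\mathbb T}$ (so that $v$, the PDE theory, and Theorem \ref{th:hl} all apply), which is exactly where convexity of $\phi_0$ is used, while everything else is bookkeeping with floors and additive constants. One could replace the appeal to Theorem \ref{th:hl} by running the linear evolution directly from the Gibbs measure $\pi_\rho$ and invoking \eqref{eq:flutt} together with Lemma \ref{lemma:GFF}, but reusing Theorem \ref{th:hl} is cleaner.
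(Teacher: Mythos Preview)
Your proof is correct and follows essentially the same route as the paper's own argument: pick a (near-)optimal slope $\rho$ in the Hopf formula, use Fenchel's inequality to bound $\phi_0$ below by the affine function $z\mapsto \rho\cdot z-\phi_0^*(\rho)$, apply stochastic domination to pass to the corresponding linear initial datum, and then invoke Theorem~\ref{th:hl} for that datum (which has $T_f=+\infty$). Your version is in fact slightly more careful than the paper's in justifying that the relevant $\rho$ lies in $\stackrel\circ{\mathbb T}$ and in using a near-optimizer rather than assuming the supremum is attained.
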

\begin{proof}[Proof of Proposition \ref{prop:lowb}]
This is the easy bound, since 
\begin{eqnarray}
  \phi(x,t)=\max_{\rho\in\mathcal A}\{\phi_\rho(x,t)\}, \quad \phi_\rho(x,t)=\rho\cdot x-v(\rho)t-\phi_0^*(\rho)
\end{eqnarray}
and then it is sufficient to prove that
 \begin{eqnarray}
    \label{eq:lowb2}
    \lim_{L\to\infty}  \mathbb P\left(\frac1L H(\lfloor xL\rfloor, tL)<\phi_\rho(x,t)-\frac\delta2\right)=0
  \end{eqnarray}
  for every $\rho\in \mathcal A$ to conclude
  easily.  On the other hand, to prove \eqref{eq:lowb2} we can replace
  (by monotonicity) the initial condition $\eta^{(L)}$ by a lower
  initial condition $\tilde\eta^{(L)}$ with height function
\begin{eqnarray}
\label{eq:condiniz3}
h_{\tilde \eta^{(L)}}(x)=\lfloor L \phi_\rho(x/L,0)\rfloor \quad \text{for every }\quad x\in \mathbb Z^2.
\end{eqnarray}
Then, \eqref{eq:lowb2} is implied by Theorem \ref{th:hl}, since the
PDE \eqref{eq:PDE} with initial condition $\phi_\rho(\cdot,0)$ has smooth
(and actually affine) solution $\phi_\rho(\cdot,t)$ for all times.
  \end{proof}

\subsection{Upper bound} 
It remains to show:
\begin{Proposition}
\label{prop:uppb}
  For every $x\in \mathbb R^2,t>0,\delta>0$,
  \begin{eqnarray}
    \label{eq:uppb}
    \lim_{L\to\infty}  \mathbb P\left(\frac1L H(\lfloor xL\rfloor, tL)>\phi(x,t)+\delta\right)=0.
  \end{eqnarray}
\end{Proposition}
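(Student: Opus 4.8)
The plan is to rerun the iterative, localized scheme behind Theorem~\ref{th:hl} (Proposition~\ref{prop:induttiva}), but with $\phi$ now the viscosity solution \eqref{eq:solvisc} and with the $C^2$ a priori bounds on $\phi$ replaced everywhere by convexity of the profiles together with the Hopf formula. As in the proof of Theorem~\ref{th:hl}, since $\eta^{(L)}\in\Omega_M$ for a fixed $M$ (Remark~\ref{rem:exM}) the finite speed of propagation (Proposition~\ref{prop:propagation}) reduces \eqref{eq:uppb} to the same statement for the dynamics localized in a box of macroscopic size $\sim t$ around $x$. Because $\phi_0$ is convex, $\phi(\cdot,s)=[sv+\phi_0^*]^*$ is convex for every $s$ and all its subgradients lie in $\overline{\mathcal A}\subset\stackrel\circ{\mathbb T}$, so any configuration with height profile $\approx L\phi(\cdot/L,s)$ is again in $\Omega_M$. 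One then proves by induction on $k=0,\dots,K$ that, with probability tending to $1$, $\tfrac1L\tilde H(y,k\tau L)\le\phi(y/L,k\tau)+e_k$ on the (shrinking) box, where $e_k$ is a small error to be controlled and $\tau=t/K$.

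For the inductive step at a point $u$: by monotonicity (Theorem~\ref{th:domina}, local version) replace $\tilde H(\cdot,k\tau L)$ by $\lceil L\phi(\cdot/L,k\tau)+Le_k\rceil$, localize the dynamics around $u$ at scale $\epsilon=\tau/\Delta$, and let $\rho_u$ be a maximiser in the Hopf formula $\phi(u/L,k\tau)=\sup_\rho\{\rho\cdot u/L-k\tau\,v(\rho)-\phi_0^*(\rho)\}$; such a maximiser exists (restrict to the compact $\overline{\mathcal A}$, on which $v$ is continuous and $\phi_0^*$ lower semicontinuous) and satisfies $\rho_u\in\partial\phi(\cdot,k\tau)(u/L)\subset\stackrel\circ{\mathbb T}$. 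On the $\epsilon$-ball, convexity then gives $\phi(z,k\tau)\le\phi(u/L,k\tau)+\rho_u\cdot(z-u/L)+G_k(u)$ with $G_k(u):=\sup_{|z-u/L|\le\epsilon}[\phi(z,k\tau)-\phi(u/L,k\tau)-\rho_u\cdot(z-u/L)]\ge0$, so using Lemma~\ref{lemma:GFF} one dominates the profile from above by the height of a $\pi_{\rho_u}$-equilibrium configuration with additive constant $\approx L[\phi(u/L,k\tau)+G_k(u)+e_k]$. Running this for time $\tau L$, Proposition~\ref{prop:propaeq} and \eqref{eq:flutt} give $\tfrac1L\tilde H(u,(k+1)\tau L)\lesssim\phi(u/L,k\tau)-v(\rho_u)\tau+G_k(u)+e_k+o(1)$, and the induction closes because $\rho_u$ maximises the Hopf formula at time $k\tau$:
\[
\phi(u/L,k\tau)-v(\rho_u)\tau=\big(\rho_u\cdot u/L-k\tau\,v(\rho_u)-\phi_0^*(\rho_u)\big)-v(\rho_u)\tau\ \le\ \sup_\rho\{\rho\cdot u/L-(k+1)\tau\,v(\rho)-\phi_0^*(\rho)\}=\phi(u/L,(k+1)\tau).
\]
Hence $e_{k+1}=e_k+\sup_uG_k(u)+o(1)$, and one finishes over a mesh of points $u$ as in Proposition~\ref{prop:induttiva}; note this step uses \emph{no} regularity of $\phi$ — convexity and the Hopf/semigroup identity play the roles of \eqref{eq:qui1}--\eqref{eq:qui2}. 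Combined with the easy lower bound (Proposition~\ref{prop:lowb}) this yields \eqref{eq:convergenza} for the viscosity solution, i.e.\ Theorem~\ref{th:shock}; the concave case is identical with suprema replaced by infima.

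The main obstacle is the accumulation of the convexity gaps $G_k(u)$. Where $\phi(\cdot,k\tau)$ is uniformly $C^{1,1}$ near $u/L$ one has $G_k(u)=O(\epsilon^2)=O(\tau^2)$, exactly as in Theorem~\ref{th:hl}, and such contributions sum to $O(t\tau)\to0$; but wherever $\phi(\cdot,k\tau)$ has a shock within distance $\epsilon$ of $u/L$ one only gets $G_k(u)=O(\epsilon)=O(\tau)$, and for $t>T_{shock}$ shocks are present at every step, so the crude bound $e_K\le\sum_k\sup_uG_k(u)$ is of order $t$ and does not vanish. To get around this I would track, instead of a uniform $e_k$, an error profile $e_k(\cdot)$ on the box — for which the step above gives $e_{k+1}(u)\approx\sup_{B(u/L,\epsilon)}e_k+G_k(u)$ — and bound the error accumulated at the target point $x$ by the $G_j$'s along a backward polygonal ``characteristic'' path of speed $\le1/\Delta$, showing this path meets the $\epsilon$-neighbourhood of the space--time shock set only on a time-set of total length $o(1)$. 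This in turn rests on the structure of convex Hamilton--Jacobi solutions: the shock set of $\phi(\cdot,s)$ — the points where the Hopf formula \eqref{eq:solvisc} has non-unique maximisers — is a small set propagating with bounded speed, which together with the finite propagation speed of the dynamics makes the shock contribution negligible. Alternatively, in a neighbourhood of a shock one may replace $\phi(\cdot,k\tau)$ from above by a corner profile $\max(A_1,A_2)$ of two tangent planes and compare directly with the associated Riemann-type solution \eqref{eq:Rie7}, which is exactly the viscosity solution for that corner datum; this is the genuinely new ingredient beyond the proof of Theorem~\ref{th:hl}.
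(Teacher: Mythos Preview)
Your basic iteration is sound, and your use of the Hopf formula to get $\phi(u/L,k\tau)-v(\rho_u)\tau\le\phi(u/L,(k+1)\tau)$ is exactly the right replacement for the time--Taylor step \eqref{eq:qui2}. The genuine gap is the one you yourself flag: the convexity gap $G_k(u)$. Unrolling your position-dependent recursion $e_{k+1}(u)\lesssim\sup_{B(u,\epsilon)}e_k+G_k(u)$ gives
\[
e_K(x)\ \lesssim\ \sup_{\substack{u_K=x,\\ |u_{j}-u_{j+1}|\le C\epsilon}}\ \sum_{j=0}^{K-1}G_j(u_{j+1}),
\]
a \emph{supremum} over backward paths, not an infimum. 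So it is not enough that \emph{some} path avoids the shock; you would need that \emph{every} path of speed $\le C/\Delta$ spends $o(1)$ total time in the $\epsilon$-neighbourhood of the shock set. Since shocks for a two-dimensional convex Hopf solution are generically curves that move with bounded speed, an adversarial path can track a shock curve for a macroscopic time interval and pick up $G_j\sim\epsilon$ at every step, giving $e_K\sim K\epsilon=t/\Delta$. Neither your path-avoidance heuristic nor the Riemann-corner comparison is developed enough to rule this out, and the latter would in any case require a separate hydrodynamic statement for corner initial data.

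The paper sidesteps the whole issue by regularising on the Legendre side rather than trying to control shocks directly. One chooses a small strictly convex $\psi^*$ with $-\delta_1\le\psi^*\le0$ on $\mathbb T$ and sets $\hat\phi(\cdot,s):=[w_s+\psi^*]^*$, where $w_s=(sv+\phi_0^*)^{**}$. Strict convexity of $w_s+\psi^*$ (with a uniform lower bound $\epsilon(\delta_1)\mathbb I$ on its Hessian, independent of $s$) forces $\hat\phi(\cdot,s)$ to be $C^{1,1}$ uniformly in $s$, and the PDE \eqref{eq:PDE} started from $\hat\phi(\cdot,s)$ stays smooth for a time $\tau=\tau(\delta_1)>0$ that does not depend on $s$. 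Call its time-$\tau$ solution $\phi^{(k)}$ when $s=(k-1)\tau$. A short convex-analysis computation (the paper's Lemma~\ref{lemma:lele}) gives the sandwich $\phi(\cdot,k\tau)\le\phi^{(k)}\le\hat\phi(\cdot,k\tau)$, hence $\|\phi(\cdot,k\tau)-\phi^{(k)}\|_\infty\le\delta_1$. One then runs your iteration, but at each step replaces the profile by the \emph{smooth} $\hat\phi(\cdot,k\tau)$ and invokes Theorem~\ref{th:hl} on $[k\tau,(k+1)\tau]$ with error $\delta_2$. The total error is $K\delta_2+\delta_1$ with $K=t/\tau(\delta_1)$; since $K$ depends on $\delta_1$ but not on $\delta_2$, one first fixes $\delta_1<\delta$ and then $\delta_2<(\delta-\delta_1)/K$. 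This makes your $G_k$ automatically $O(\epsilon^2)$ at every step and every point, because the profile you actually evolve is never the singular $\phi$ but always the uniformly $C^{1,1}$ surrogate $\hat\phi$.
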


\begin{proof}[Proof of Proposition \ref{prop:uppb}]
  We begin with a few  considerations about the PDE
  \eqref{eq:PDE} and its viscosity solution \eqref{eq:solvisc}.  First
  of all, by the involutive property of the Legendre-Fenchel transform
  when acting on the convex function $\phi(\cdot,t)$, we can trivially
  rewrite
\begin{eqnarray}
  \label{eq:ie}
  \phi(x,t)=[w_t]^*(x)
\end{eqnarray}
where
\begin{eqnarray}
  \label{eq:wt}
  w_t(y):=[t v+\phi_0^*]^{**}(y)
\end{eqnarray}
that is nothing but the lower convex envelope of $tv(\cdot)+\phi_0^*(\cdot)$.

Next, given $\delta_1>0$, let $\psi^*:\mathbb R^2\mapsto \mathbb R$ be a convex function such that 
\begin{eqnarray}
\label{eq:delta1}
  -\delta_1\le \psi^*(y)\le 0, y\in \mathbb T,
\end{eqnarray}
while at the same time $\psi^*$ is smooth and in particular its
Hessian $H_{\psi^*}$ satisfies $H_{\psi^*}\ge \epsilon \mathbb I$
everywhere ($\mathbb I$ being the $2\times 2$ identity matrix), for
some constant $\epsilon(\delta_1)>0$.
For instance, one can take $\psi^*(\cdot)$ to be paraboloid with suitably chosen parameters. 
We then define
\begin{eqnarray}
  \label{eq:phipsi}
  \hat \phi(x,t):=[w_t+\psi^*]^*(x)=\sup_y\{y\cdot x-(w_t(y)+\psi^*(y))\},
\end{eqnarray}
to be compared with \eqref{eq:ie}.
Equations
\eqref{eq:delta1}, \eqref{eq:ie}, together with the fact that the supremum in and \eqref{eq:phipsi} can be restricted to $y\in \overline {\mathcal A}\subset \mathbb T$, imply that
\begin{eqnarray}
\label{eq:mp}
 0\le \hat \phi(x,t)-\phi(x,t)\le   \delta_1\quad \forall x,t.
\end{eqnarray}

Finally, we observe the following:
\begin{Lemma}
\label{lemma:t0}
There exists $\tau>0$ (depending on the choice of $\psi^*$ and in
particular on the parameter $\delta_1$ in \eqref{eq:delta1}) such
that, for every $s\ge0$, the solution $\phi_s(x,t)$ of Eq.
\eqref{eq:PDE} with initial condition
$\phi(\cdot,0):=\hat\phi(\cdot,s)$ is smooth on the time interval
$[0,\tau]$. More precisely, as long as $t\in[0,\tau]$, the space gradient
$\nabla\phi_s(x,t)$ is Lipschitz with respect to $x$ while $\partial_t\phi_s(x,t)$
is Lipschitz w.r.t. $t$; the Lipschitz constants are uniform w.r.t. $s$.
\end{Lemma}
\begin{proof}
  [Proof of Lemma \ref{lemma:t0}]
Since the initial condition $\hat\phi(\cdot,s)$ is convex, the solution of \eqref{eq:PDE} can be written as 
\begin{eqnarray}
  \phi_s(x,t)=[t v+\hat\phi(s)^*]^*=[t v+w_s+\psi^*]^*=:[G_{s,t}]^*.
\end{eqnarray}
Given that $w_s(\cdot)$ is convex and $\psi^*(\cdot)$ is strictly
convex with Hessian lower bounded by $\epsilon \mathbb I$ we deduce
that, for $t\le \tau$ with $\tau $ small enough, the function
$G_{s,t}(\cdot)$ is strictly convex and moreover
\begin{eqnarray}
\label{eq:rotonda}
  G_{s,t}(y_2)-G_{s,t}(y_1)-\nabla G_{s,t}(y_1)\cdot (y_2-y_1)\ge \epsilon \|y_1-y_2\|^2/4.
\end{eqnarray}
Strict convexity of $G_{s,t}(\cdot)$ implies differentiability of
$\phi_s(\cdot,t)$ \cite[Th. 11.13]{cf:Rocka}. The spatial gradient
$\nabla \phi_s(x,t)$ is the unique point $z$ that realizes the
supremum in the Legendre-Fenchel transform \eqref{eq:Leg}, with
$f\equiv G_{s,t}$. Of course
$\nabla \phi_s(x,t)\in \overline{\mathcal A}$ because
$G_{t,s}=+\infty$ outside $\overline{\mathcal A}$ (recall that
$\mathcal A$ is the range of the sub-differential of $\phi_0(\cdot)$).

The claim on the Lipschitz continuity of $\nabla \phi_s(x,t)$ with
respect of the space variable, with Lipschitz constant depending on
$\epsilon$, then easily follows from \eqref{eq:rotonda} and the
definition of Legendre-Fenchel transform (we skip elementary details).
The proof of Lipschitz continuity of $\partial_t\phi_s(x,t)$ 
w.r.t. $t$ is similar. Indeed, given that
$\partial_t\phi_s(x,t)=v(\nabla\phi_s(x,t)) $ (because $\phi_s(\cdot,\cdot)$ solves
\eqref{eq:PDE}) and $v(\cdot)$ is smooth in $\overline{\mathcal A}$, the
proof reduces to proving that $\nabla \phi_s(x,t)$ is Lipschitz
w.r.t. time and once more this follows easily from \eqref{eq:rotonda}.
\end{proof}




\medskip We will prove \eqref{eq:uppb} at time $t$ and we assume for
lightness of notations that $x=0$.  The proof uses a
recursion that is very similar to that employed in Section
\ref{sec:recursion}; therefore, we try to use as far as possible the same
notations as we used there, and we give fewer details here.

We break the time interval $[0,t]$ into sub-intervals
$[(k-1),k\tau)$, with $\tau$ as in Lemma \ref{lemma:t0} and
$k\le K:=t/\tau$ (we assume for simplicity that $K\in\mathbb N$).
For $1\le k\le K-1$ we define $\phi^{(k)}(\cdot)$ to be the
  solution at time $\tau$ of the PDE \eqref{eq:PDE} with initial
  condition $\hat \phi(\cdot,(k-1)\tau)$.

  We will prove  at the end of this section:
  \begin{Lemma}
    \label{lemma:lele}
    For any $k\le K$ one has
\begin{eqnarray}
  \label{eq:lele}
 \phi(\cdot,k \tau)\le \phi^{(k)}(\cdot)\le \hat \phi(\cdot,k \tau).
\end{eqnarray}
  \end{Lemma}
 From this and \eqref{eq:mp} we deduce that
\begin{eqnarray}
\label{mlc}
  \|\phi(\cdot,k \tau)- \phi^{(k)}(\cdot)\|_\infty\le \delta_1
\end{eqnarray}
for every $k$.

The initial configuration $\eta^{(L)}$ defined in \eqref{eq:condiniz} with $\phi_0$ as in Theorem \ref{th:shock} belongs to $\Omega_M$ for some
finite $M$, uniformly in $L$, simply because the gradient of
$\phi_0(\cdot)$ is bounded away from $\partial \mathbb T$.  We replace
the Poisson point realization $W$ that defines the dynamics with
$\tilde W$ as in Definition \ref{def:tildew}: in other words, in the
time interval $[k\tau L,(k+1)\tau L]$ we are localizing the dynamics
in the rectangle $D^{(L)}_k$ defined in \eqref{eq:Dlk}. From
Proposition \ref{prop:propagation} we know that the resulting height
function $\tilde H(0,s)$ is the same as $H(0,s) $ for every $s\le tL$,
except with a probability going to zero exponentially as $L\to\infty$.

At time zero we replace, by monotonicity, the initial profile
$\phi_0(\cdot)$ in the assumption of Theorem \ref{th:shock} by
$\hat\phi(\cdot,0)= [\phi_0^*+\psi^*]^*(\cdot)\ge \phi_0(\cdot)$.
Then, we run the dynamics for a time $L \tau$. At time $L \tau$ we
know by Theorem \ref{th:hl} that the height function is close to
$\phi^{(1)}(\cdot)$ and in particular lower than
$\phi^{(1)}(\cdot)+\delta_2$, for any fixed $\delta_2>0$.  More
precisely,
\begin{eqnarray}
  \label{eq:mopr}
  \lim_{L\to\infty}\mathbb P\left(
\exists y\in D^{(L)}_1, \frac1L \tilde H(y, \tau L)>\phi^{(1)}(y/L)+\delta_2
\right)=0.
\end{eqnarray}
Thanks to  Remark \ref{rem:stripp}, we are guaranteed that Theorem
\ref{th:hl} can be applied here because from Lemma \ref{lemma:t0} we
know that the solution of the PDE \eqref{eq:PDE} is sufficiently
smooth (its gradient  is Lipschitz in space and its
time derivative  is Lipschitz in time) up to  time $\tau$.

At time $\tau L$, we replace (again, by monotonicity) the height function in $D^{(L)}_1$ by the discretization 
\[y\in D^{(L)}_1\mapsto 
\lfloor L(\hat\phi(\cdot/L,\tau)+\delta_2)\rfloor\ge \lfloor L(\phi^{(1)}(\cdot/L)+\delta_2)\rfloor
\]
 and we run the 
evolution for another time interval $\tau L$ (in such time interval $[\tau L,2\tau L]$ the dynamics is localized in $D_1^{(L)}$). 

 Repeating the procedure $j$ times, we obtain at time $\tau j L$ 
\begin{eqnarray}
  \label{eq:mopr2}
  \lim_{L\to\infty}\mathbb P\left(
\exists y\in D^{(L)}_j: \frac1L \tilde H(y, \tau j L)>\phi^{(j)}(y/L)+j\delta_2
\right)=0
\end{eqnarray}
For $j=K$ and taking $x=0\in D_K^{(L)}$ we see that
\begin{eqnarray}
  \label{eq:mop3r}
  \lim_{L\to\infty}\mathbb P\left(
 \frac1L \tilde H(0, t L)>\phi^{(K)}(0)+K\delta_2
\right)=0. 
\end{eqnarray}
Using \eqref{mlc}, recalling $K\tau=t$ and taking $\delta_1,\delta_2$ sufficiently small so that
\begin{equation}
  \label{eq:ddd}
\delta_1+K\delta_2<\delta  
\end{equation}
we obtain the statement \eqref{eq:uppb} with $x=0$ (recall that we chose $x=0$ just for lightness of notation).
Note that the choice \eqref{eq:ddd} is possible since $K=t/\tau$ and $\tau$ depends on $\delta_1$ but not on $\delta_2$.
\end{proof}

\begin{proof}
  [Proof of Lemma \ref{lemma:lele}]
The first inequality in \eqref{eq:lele} is equivalent to
\begin{eqnarray}
  \label{eq:lele1}
[  k\tau v+\phi_0^*]^*(\cdot)\le [\tau v+((k-1)\tau v+\phi_0^*)^{**}+\psi^*]^*(\cdot)
\end{eqnarray}
which holds because $\psi^*(\cdot)\le0$ (where $\phi^*_0$ is not $+\infty$) and $((k-1)\tau v+\phi_0^*)^{**}(\cdot)\le
((k-1)\tau v+\phi_0^*)(\cdot)$.

As for the second inequality in \eqref{eq:lele}, it is equivalent to
\begin{eqnarray}
  \label{eq:lele2}
   [\tau v+((k-1)\tau v+\phi_0^*)^{**}+\psi^*]^*(\cdot)\le [(k\tau v+\phi_0^*)^{**}+\psi^*]^*(\cdot)
\end{eqnarray}
which follows if we can prove that, for every $y$,
\begin{eqnarray}
  \label{eq:ml}
  \tau v(y)+((k-1)\tau v+\phi_0^*)^{**}(y)\ge (k\tau v+\phi_0^*)^{**}(y)
\end{eqnarray}
(it suffices to prove this for $y\in \overline{\mathcal A}$ since outside $\overline{\mathcal A}$ both sides of the inequality are $+\infty$).
For lightness of notation we put
\begin{eqnarray}
  \label{eq:cfg}
  c:=k-1,\quad f(\cdot):=\tau v(\cdot),\quad g(\cdot):=\phi_0^*(\cdot).
\end{eqnarray}
To prove \eqref{eq:ml} on can proceed as follows. The double
Legendre-Fenchel transform in the l.h.s. is equivalently given by
\cite[Prop. 2.31 and Th. 11.1]{cf:Rocka}
\begin{eqnarray}
  \label{eq:rock1}
  (cf+g)^{**}(y)=\inf_{\{\lambda_i\}, \{y_i\}} \Bigl(\sum_{i=1,2}\lambda_i [c f(y_i)+g(y_i)]\Bigr),
\end{eqnarray}
where the infimum is taken over $\lambda_i\ge0$ and $y_i\in \mathbb R^2,i=1,2$ such that
$ \sum_{i=1}^2\lambda_i=1 $ and $\sum_{i=1}^2 \lambda_i y_i=y$. Since
$cf(\cdot)+g(\cdot)=+\infty$ outside the compact set $\overline{\mathcal A}$, the
infimum is realized for some values
$\{\tilde\lambda_i,\tilde y_i\}_{i=1,2}$.  Note that
\begin{eqnarray}
  \label{eq:ml2}
\sum_{i=1}^2 \tilde\lambda_if(\tilde y_i)\le f(y).  
\end{eqnarray}
  In fact, in the opposite case we would have
  \begin{gather}
    (c f+g)^{**}(y)=  \sum_{i=1,2}\tilde \lambda_i [c f(\tilde y_i)+g(y_i)]
    \\>c f(y)+\sum_{i=1,2}\tilde \lambda_ig(y_i)\ge c f(y)+g(y)
  \end{gather}
  (in the last step we used convexity of $g$) so that $(cf+g)^{**}(y)>(cf+g)(y)$ which is false.
  Putting everything together we have
  \begin{multline}
    [(1+c)f+g]^{**}(y)=\inf_{\{\lambda_i\}, \{y_i\}}  \Bigl(\sum_{i=1,2}\lambda_i [(1+c) f(y_i)+g(y_i)]\Bigr)\\
    \le
    \sum_{i=1}^2\tilde\lambda_i[(1+c)f(\tilde y_i)+g(\tilde y_i)]\\\le f(y)+
    \sum_{i=1}^2\tilde\lambda_i[cf(\tilde y_i)+g(\tilde y_i)]=f(y)+
    (cf+g)^{**}(y)
  \end{multline}
  where in the second inequality we have used \eqref{eq:ml2}. In view of \eqref{eq:cfg}, this is exactly the desired inequality \eqref{eq:ml}.
\end{proof}

To conclude, let us prove Proposition \ref{th:guillaume}. Since the
Hessian of $v(\cdot)$ has mixed signature everywhere in $\stackrel\circ{\mathbb T}$ and
$\mathcal A\subset \stackrel\circ{\mathbb T}$ has non-empty interior,
$t v(\cdot)+\phi_0^*(\cdot)$ is not convex on $\mathcal A$ for $t$ sufficiently large and
therefore $w_t(\cdot)=[t v+\phi_0^*]^{**}(\cdot)$ is not strictly convex there. On the
other hand $\phi(\cdot,t)$ is the Legendre-Fenchel transform of
$w_t(\cdot)$ (cf. \eqref{eq:ie}) and it is known that the
Legendre-Fenchel transform of a non-strictly convex function cannot be
differentiable everywhere \cite[Th. 11.13]{cf:Rocka}.

\section*{Acknowledgments}
We are grateful to Guillaume Aubrun, Christophe Bahadoran, Alexei
Borodin and Hubert Lacoin for very valuable discussions.  F.  T.  was
partially funded by the ANR-15-CE40-0020-03 Grant LSD, by the CNRS
PICS grant ``Interfaces al\'eatoires discr\`etes et dynamiques de
Glauber'' and by MIT-France Seed Fund ``Two-dimensional Interface
Growth and Anisotropic KPZ Equation''.

\end{document}